\documentclass[a4paper,10pt]{amsart}

\usepackage[latin1]{inputenc}
\usepackage{amsmath, amsthm, amssymb}
\usepackage{amscd}
\usepackage[dvips]{graphicx}
\usepackage[all]{xy}
\usepackage{enumerate}
\usepackage{hyperref}

\usepackage{caption}
\usepackage{subcaption}
\usepackage[rightcaption]{sidecap}

\usepackage[usenames,dvipsnames]{pstricks}
\usepackage{epsfig}
\usepackage{pst-grad} 
\usepackage{pst-plot} 
\usepackage{pstricks-add}
\usepackage{pst-solides3d}

\usepackage{color}

\newtheorem{thm}{Theorem}[section]
\newtheorem{cor}[thm]{Corollary}
\newtheorem{prop}[thm]{Proposition}
\newtheorem{defin}[thm]{Definition}

\newtheorem{claim}[thm]{Claim}
\newtheorem*{thm*}{Theorem}
\newtheorem*{prop*}{Proposition}
\newtheorem*{defin*}{Definition}
\newtheorem*{lem*}{Lemma}
\newtheorem*{claim*}{Claim}
\newtheorem*{cor*}{Corollary}

\newtheorem{thmintro}{Theorem}

\theoremstyle{remark}
\newtheorem{rem}[thm]{Remark}
\newtheorem*{rem*}{Remark}


\newcommand\M{\widetilde{M}}

\newcommand{\wt}[1]{\widetilde{#1}}

\newcommand\R{\mathbb R}
\newcommand\Z{\mathbb Z}
\newcommand\N{\mathbb N}

\newcommand\eps{\varepsilon}


\newcommand\flot{ \phi^{t} }
\newcommand\hflot{ \tilde{ \phi}^t }



\newcommand\orb{ \mathcal O }
\newcommand\leafs{ \mathcal L ^{s} }
\newcommand\leafu{ \mathcal L ^{u} }

\newcommand\fs{\mathcal F^{s} }
\newcommand\hfs{\widetilde{\mathcal F}^{s} }
\newcommand\fu{\mathcal F^{u} }
\newcommand\hfu{\widetilde{\mathcal F}^{u} }

\newcommand{\al}[1]{\widetilde{\alpha_{#1} } }

\newcommand\univ{S^1_{\text{univ}}}


\title[Homotopy versus isotopy of closed orbits]{Knot theory of $\R$-covered Anosov flows: homotopy versus isotopy of closed orbits}
\author{Thomas Barthelm\'e}
\address{Tufts University, Medford, MA 02155, USA}
\email{thomas.barthelme@tufts.edu}
\urladdr{sites.google.com/site/thomasbarthelme}

\author{Sergio R.\ Fenley} 
\address{Florida State University, Tallahassee, FL 32306 \and \newline \indent Princeton University, Princeton, NJ 08540, USA}
\email{fenley@math.fsu.edu}


\begin{document}

\begin{abstract}
 In this article, we study the knots realized by periodic orbits of $\R$-covered Anosov flows in compact $3$-manifolds. We show
that if two orbits are freely homotopic then in fact they are 
isotopic. We show that lifts of periodic orbits to the universal cover are unknotted. When the manifold is
atoroidal, we deduce some finer properties regarding the existence of embedded cylinders connecting
two given homotopic orbits.
\end{abstract}

\maketitle

\section{Introduction}

Most examples of Anosov flows on $3$-manifolds have infinitely many homotopically different periodic orbits. When we restrict
our attention to a given free homotopy class however, the picture was thought to be far less interesting. Indeed, classical
examples have a maximum of two periodic orbits per free homotopy class.
But, in \cite{Fen:AFM}, the second author exhibited some Anosov flows on three-manifolds with 
 a striking and totally unexpected behavior:
in these flows, 
each free homotopy class of a periodic orbit contains infinitely many
other orbits. In this paper, we study the isotopy classes
of the periodic orbits of these Anosov flows. 
The flows 
are what is called skewed $\R$-covered Anosov flows on atoroidal $3$-manifolds.

An Anosov flow is {\em $\R$-covered} if the stable (or equivalently the unstable
foliation) lifts to a foliation in the universal cover which has leaf
space homeomorphic to the real numbers $\R$.
It is in addition called {\em skewed} if it is not topologically
conjugate to a suspension Anosov flow.
If the manifold is also atoroidal then
these flows satisfy the homotopic properties of closed orbits described
above. Notice that the contact Anosov flows constructed in \cite{FouHassel:contact_anosov} 
are of this type.

The research for this article was started by the following question asked by P.\ Foulon: a free
homotopy class of periodic orbits of a skewed $\R$-covered Anosov flow in an
atoroidal manifold gives an infinite
family of homotopically equivalent embeddings of $S^1$ in a $3$-manifold $M$, i.e., knots in $M$. 
One very natural question is whether
these knots are different in the sense of knot theory?
That is, are they isotopic curves or not?
The question of studying knots associated with some dynamical system is not new (see for instance the survey by E.\ Ghys \cite{Ghys:knots_and_dynamics}), the difference here being that the manifold $M$ has (a rich) topology.

Our main result is the following:

\begin{thmintro}
 Let $\flot$ be a skewed $\R$-covered Anosov flow on a closed $3$-manifold. 
Suppose that the stable foliation
is transversely orientable.
If two periodic orbits of $\flot$ are freely homotopic, then they are isotopic.
\end{thmintro}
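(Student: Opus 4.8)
The plan is to combine the known structure of free homotopy classes of periodic orbits of skewed $\R$-covered Anosov flows with two ``leafwise'' isotopies, one inside a weak stable leaf and one inside a weak unstable leaf, bridged by an auxiliary periodic orbit. First I would reduce to a single lozenge. Lift $\flot$ to $\M$, let $\orb\cong\R^2$ be its orbit space, and let $\orb^s,\orb^u$ be the one-dimensional foliations induced by $\fs,\fu$. If $\gamma_0$ and $\gamma_1$ are freely homotopic, then after conjugating there is $g\in\pi_1(M)$ fixing the images $p_0,p_1\in\orb$ of chosen lifts $\tilde\gamma_0,\tilde\gamma_1$, and we may take $g$ to be the element represented by $\gamma_0$ (in particular primitive). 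By the structure theorem for free homotopy classes of skewed $\R$-covered Anosov flows (due to Fenley and Barbot), $p_0$ and $p_1$ are joined by a finite chain of lozenges, each invariant under $g$; since ``isotopic'' is transitive it is enough to treat the case in which $p_0$ and $p_1$ are the two corners of a single lozenge $L$.

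Next I would produce a third periodic orbit adapted to $L$. Writing $A\subset\orb^s(p_0)$, $B\subset\orb^u(p_0)$, $C\subset\orb^s(p_1)$, $D\subset\orb^u(p_1)$ for the four half-leaves bounding $L$, set $a=A\cap D$ and $b=B\cap C$. Because $\fs$ is transversely orientable — and an $\R$-covered stable foliation is transversely orientable exactly when the unstable one is — the element $g$ preserves the coorientations of both $\fs$ and $\fu$; since it fixes $p_0$ and $p_1$ it then preserves each of $A,B,C,D$, and hence fixes the intersection points $a$ and $b$. Therefore the orbit of $\hflot$ in $\M$ lying over $a$ is $g$-invariant and projects to a periodic orbit $\gamma_a$ of $\flot$; by construction $\gamma_a$ lies in the weak stable leaf of $\gamma_0$ and in the weak unstable leaf of $\gamma_1$.

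Finally I would build two embedded annuli. Let $W$ be the weak stable leaf of $\hflot$ containing $\tilde\gamma_0$; it is a plane containing $\tilde\gamma_a$ as well, and the preimage in $W$ of the stable segment $[p_0,a]\subset\orb^s(p_0)$ is a closed strip $S$ bounded by $\tilde\gamma_0$ and $\tilde\gamma_a$, which is $g$-invariant with $g$ acting on it by a translation (not a glide reflection, since $g$ preserves coorientations and does not interchange the two boundary orbits). Distinct leaves of $\hfs$ are disjoint, and the stabilizer of $W$ in $\pi_1(M)$ equals $\langle g\rangle$; hence $hS\cap S=\varnothing$ for every $h\notin\langle g\rangle$, so $S/\langle g\rangle$ embeds in $M$ as an annulus with boundary $\gamma_0\cup\gamma_a$. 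An embedded annulus joining two knots yields an ambient isotopy between them, so $\gamma_0$ is isotopic to $\gamma_a$; the mirror argument inside the weak unstable leaf of $\tilde\gamma_1$, using the strip bounded by $\tilde\gamma_a$ and $\tilde\gamma_1$, shows $\gamma_a$ is isotopic to $\gamma_1$, and transitivity finishes the proof.

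The hard part will be everything in the last two steps that makes this formal construction legitimate: checking that $g$ preserves, rather than flips, the bounding half-leaves of the lozenge (this is precisely where transverse orientability is consumed, and it forces one to pin down the relation between transverse orientability of $\fs$ and of $\fu$ for $\R$-covered flows), and identifying the stabilizer of a weak leaf through a periodic orbit with $\langle g\rangle$, which rests on the primitivity normalization and on such a leaf being an embedded cylinder in $M$. Should the leafwise strips fail to descend to \emph{embedded} annuli, the fallback is to concatenate the stable strip over $[p_0,a]$ with the unstable strip over $[a,p_1]$ into a single band in $\M$ — embedded because a stable and an unstable leaf meet along at most one orbit — and round the corner along $\tilde\gamma_a$; but the resulting annulus in $M$ need not be embedded, its self-intersections running along $\gamma_a$ and its translates, and removing those to recover a genuine isotopy (or showing they never occur) is exactly the additional work that the finer, atoroidal statements carry out.
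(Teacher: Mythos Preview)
There is a genuine gap at the heart of your construction: the points $a=A\cap D$ and $b=B\cap C$ do not exist. By the very definition of a lozenge (and this is not a convention issue---it is the defining feature), the half-leaf $A\subset\hfs(p_0)$ does \emph{not} intersect $D\subset\hfu(p_1)$, and $B$ does not intersect $C$. In the skewed $\R$-covered picture this is transparent: when $p_1=\eta(p_0)$, the unstable leaf $\hfu(p_1)=\eta^s(\hfs(p_0))$ is precisely the \emph{supremum} of unstable leaves meeting $\hfs(p_0)$, so it lies on the boundary of the diagonal strip and carries no orbit in common with $\hfs(p_0)$. A lozenge has only two corners, not four; the other two ``would-be'' corners are ideal. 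Consequently there is no intermediate periodic orbit $\gamma_a$ lying simultaneously in the weak stable leaf of $\gamma_0$ and the weak unstable leaf of $\gamma_1$, and both your main argument and your fallback (which still invokes the strip over $[p_0,a]$) collapse.

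This is exactly why the paper cannot simply slide inside a single weak leaf and must instead build an \emph{immersed} annulus $C$ transverse to $\fs$: it takes the interval $I=[F_0,F_1]$ of stable leaves between the two corners and, in each leaf $F_t$, draws the hyperbolic geodesic $c_t$ with the ideal points of $\wt{\alpha_0}$ (propagated via the universal circle). The quotient $C=\pi(\bigcup_t c_t)$ is only immersed, and the bulk of the proof is an analysis of its self-intersections---showing they are transverse, that each intersection arc runs from one boundary component of $\wt C/g$ into the interior (never across, because $h(I)\subset I$ is impossible), and then choosing compatible horizontal and vertical trivializations so that the level curves $H_t(\cdot)$ remain embedded even as they cross the self-intersection locus. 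Your instinct that ``removing self-intersections is the additional work'' is right, but the annulus on which that work must be done is of an entirely different nature from the leafwise strips you propose.
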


This result shows that the periodic orbits of a skewed $\R$-covered Anosov flow 
in a given homotopy class are surprisingly similar!
We do not need to assume that the manifold 
is atoroidal for this result. When it is atoroidal, we have the
\begin{cor*}
 Let $\flot$ be a skewed $\R$-covered Anosov flow on an \emph{atoroidal} closed $3$-manifold.
 Every periodic orbit is \emph{isotopic} to infinitely many other closed orbits.
\end{cor*}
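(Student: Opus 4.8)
The plan is to deduce the Corollary from the main Theorem together with the free homotopy results of \cite{Fen:AFM}, after a reduction to the transversely orientable case. Fix a periodic orbit $\gamma$. First I would recall why, when $M$ is atoroidal, $\gamma$ is freely homotopic to infinitely many other periodic orbits: the orbit space $\orb$ of the lifted flow $\hflot$ is homeomorphic to $\R^2$ and carries the two transverse one-dimensional foliations induced by $\fs$ and $\fu$, and $\gamma$ lifts to a point $x\in\orb$ fixed by some nontrivial $g\in\pi_1(M)$. Because $\flot$ is skewed --- not topologically conjugate to a suspension --- $g$ fixes every corner of a bi-infinite $g$-invariant chain of lozenges through $x$, and each corner projects to a periodic orbit $\alpha_n$ ($n\in\Z$, with $\alpha_0=\gamma$), where $\alpha_n$ is freely homotopic to $\gamma$ when $n$ is even and to $\gamma^{-1}$ when $n$ is odd. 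As shown in \cite{Fen:AFM}, atoroidality forces the orbits $\alpha_n$ to be pairwise distinct (a coincidence among them, or a finite chain, would produce an essential torus), so $\{\alpha_{2n}\}_{n\in\Z}$ is an infinite family of distinct periodic orbits, all freely homotopic to $\gamma$.

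If $\fs$ is transversely orientable, this already finishes the proof: by the main Theorem each $\alpha_{2n}$ is isotopic to $\gamma$, hence $\gamma$ is isotopic to infinitely many other closed orbits.

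It remains to remove the transverse orientability hypothesis, and I expect this to be the only genuinely non-formal point. I would pass to the double cover $p\colon\hat M\to M$ on which the pullback of $\fs$ is transversely orientable. The lifted flow $\hflot$ is again a skewed $\R$-covered Anosov flow (this is a property of the common universal cover), and $\hat M$ is again closed and atoroidal, since atoroidality of a manifold carrying an Anosov flow passes to finite covers. If $[\gamma]\in p_*\pi_1(\hat M)$, then $\gamma$ and all the $\alpha_{2n}$ (whose classes are conjugate to $[\gamma]$) lift to pairs of disjoint periodic orbits of $\hflot$, a free homotopy $\gamma\simeq\alpha_{2n}$ lifts, and the case already settled gives that a fixed lift $\hat\gamma$ is isotopic in $\hat M$ to a lift of each of infinitely many $\alpha_{2n}$. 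The obstacle is that an ambient isotopy of $\hat M$ need not descend to $M$: to get around it one must either run the construction underlying the main Theorem $\Z/2\Z$-equivariantly --- choosing the embedded cylinders or bands it produces between freely homotopic orbits to be invariant, or disjoint from their deck translates, after a generic perturbation --- or else argue the isotopy directly in $M$; the same equivariance considerations also dispose of the case $[\gamma]\notin p_*\pi_1(\hat M)$, where $\gamma$ lifts to a single orbit double-covering it. Granting this last point, the Corollary follows immediately.
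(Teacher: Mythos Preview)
Your argument in the transversely orientable case is correct and is exactly the paper's intended deduction: the Corollary appears in the introduction immediately after Theorem~A, phrased as what one obtains ``when [the manifold] is atoroidal,'' and so implicitly carries over the standing hypothesis that $\fs$ is transversely orientable. With that reading it is an immediate consequence of Theorem~A together with Theorem~\ref{thm:infinite_homotopy_class} (from \cite{Fen:AFM}): atoroidality guarantees that the double free homotopy class of any periodic orbit consists of infinitely many distinct orbits, and Theorem~A upgrades free homotopy to isotopy. The paper gives no proof beyond this.

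Your third paragraph attempts something the paper does not: removing the transverse orientability assumption. The remark following the proof of Theorem~\ref{thm:homo_implies_isotope} explicitly discusses why this case is delicate (orientation-reversing deck transformations, M\"obius band stable leaves, only the \emph{squares} of certain orbits being freely homotopic) and leaves the matter open. Your double-cover outline is reasonable in spirit, but as you yourself acknowledge, the key descent step --- producing an isotopy in $M$ from one in $\hat M$, or equivalently rerunning the geodesic-wall construction of Theorem~A $\Z/2\Z$-equivariantly so that the immersed annulus and its deck translate are controlled --- is left as a granted assumption. That is where the genuine difficulty lies, and neither your proposal nor the paper supplies it. If you intend the Corollary exactly as stated (without transverse orientability), this gap remains; if you accept the paper's implicit hypothesis, your first two paragraphs already constitute a complete and correct proof matching the paper's.
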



To study this question, we start by using several objects associated 
with the stable foliation of the flow. Namely, we use the universal circle of the 
stable foliation, as well as certain geometric walls obtained using the universal circle.
We finish by a 
very careful analysis of the possible self intersections of an a priori
only immersed annulus realizing the homotopy between the closed orbits.
This annulus is obtained as the quotient of the walls mentioned above.
This result shows that $M - \alpha_1$ is homeomorphic to 
$M - \alpha_2$, by a homeomorphism induced from $M$ which
is isotopic to the identity.

\vskip .1in
The next natural question is whether any such 
two isotopic orbits could be linked by an \emph{embedded} annulus. 
This question can be resolved in the atoroidal
case using a regulating pseudo-Anosov flow and has deep connections
with the action of the fundamental group on the orbit 
space of the flow and the universal circle of 
the foliation. We prove:
\begin{thmintro}
Let $\phi^t$ be a skewed, $\R$-covered Anosov flow whose stable foliation is transversely orientable.
Suppose that the manifold is atoroidal.
Then, for any closed orbit $\alpha$ of $\phi^t$, there
is a closed orbit $\beta$ which is isotopic to $\alpha$, but
such that there is no embedded annulus in $M$ with boundary
the union of $\alpha$ and $\beta$.
Moreover, for any closed orbit of $\phi^t$, there is \emph{at most} a finite number of orbits that can be the other boundary of an embedded annulus. In fact, there exists closed orbits of $\phi^t$ such that \emph{no} orbits can be the other boundary of an embedded annulus.
\end{thmintro}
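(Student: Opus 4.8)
The plan is to translate the existence of an embedded annulus into the existence of a suitable equivariant strip in $\M$, and then to obstruct such a strip using a pseudo-Anosov flow transverse to and regulating the weak stable foliation of $\flot$. So fix a closed orbit $\alpha=\alpha_0$, represented by $g\in\pi_1(M)$, and let $\{\alpha_n\}_{n\in\Z}$ be its free homotopy class. By the known structure of free homotopy classes of periodic orbits of skewed $\R$-covered Anosov flows, the lifts $\wt{\alpha_n}\subset\M$ correspond to points $p_n$ of the orbit space $\orb$ that are all fixed by $g$ and form a bi-infinite chain of lozenges, with $p_0$ and $p_n$ joined by a sub-chain of $|n|$ lozenges. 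Since $\pi_1(M)$ is torsion free, $g$ has infinite order, so any embedded annulus $A$ with $\partial A=\alpha_0\cup\alpha_n$ is $\pi_1$-injective; passing to the cover associated with $\langle g\rangle$ and then lifting to $\M$, such an $A$ yields a properly embedded, $g$-invariant strip $\wt A\cong\R\times[0,1]$ with $\partial\wt A=\wt{\alpha_0}\cup\wt{\alpha_n}$ and with $\wt A\cap h\wt A=\emptyset$ for every $h\in\pi_1(M)\setminus\langle g\rangle$; conversely any such \emph{clash-free} strip descends to an embedded annulus. Thus the theorem reduces to showing that, for a fixed $\alpha$, only finitely many (possibly zero) values of $n$ admit a clash-free $g$-invariant strip joining $\wt{\alpha_0}$ to $\wt{\alpha_n}$.

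Since $M$ is atoroidal there is a pseudo-Anosov flow $\psi$ transverse to $\fs$ and regulating it. Projection along $\wt\psi$ then identifies $\orb_\psi$ equivariantly with each leaf of $\hfs$, in particular with the $g$-invariant leaves through $\wt{\alpha_0}$ and $\wt{\alpha_n}$, turning these orbits into $g$-invariant lines $c_0,c_n\subset\orb_\psi$ whose ideal endpoints lie among the $g$-fixed points of $\partial\orb_\psi$, equivalently of the universal circle $\univ$. The crucial point is that $\psi$ is a pseudo-Anosov flow on an \emph{atoroidal} manifold, so each of its free homotopy classes of periodic orbits is finite: $g$ fixes only finitely many points of $\orb_\psi$ --- a single finite lozenge chain, possibly empty --- and only finitely many ideal fixed points. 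Using this, I want to show that the clash-free condition forces $\wt A$, once isotoped into efficient position with respect to $\psi$, to bound together with pieces of $\psi$-leaves a product region over that finite $g$-lozenge chain; this confines $c_n$ to finitely many isotopy classes and hence confines $n$ to finitely many values. \textbf{The main obstacle is precisely this implication:} converting ``$\wt A$ embedded in $\M$, $g$-invariant, and disjoint from its $\pi_1(M)\setminus\langle g\rangle$-translates'' into a genuine combinatorial restriction on how $c_n$ can run through the $g$-lozenge chain of $\psi$ in $\orb_\psi$. Carrying this out will use the transversality of $\psi$ and $\fs$ to put $\wt A$ in efficient position, control of the boundary behaviour of $\wt A$ along $\wt{\alpha_0}$ and $\wt{\alpha_n}$ via $\univ$, and the elimination of extraneous intersections introduced along the way.

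Granting this finiteness, the three assertions follow. An embedded annulus forces its two boundary orbits to be freely homotopic, hence to be of the form $\alpha_0,\alpha_n$, so the finiteness over \emph{all} closed orbits is immediate. By the Corollary to Theorem~A in the atoroidal case, $\alpha$ is isotopic to infinitely many closed orbits $\alpha_n$; only finitely many of them can cobound an embedded annulus with $\alpha$, so any of the others provides the orbit $\beta$ asserted. For the existence of orbits with \emph{no} partner at all, the plan is to exhibit closed orbits $\alpha$ whose configuration in $\orb_\psi$ is degenerate enough that every candidate strip is obstructed --- for instance orbits whose class $g$ acts on $\orb_\psi$ with no periodic orbit of $\psi$ in its free homotopy class, or for which the $g$-fixed point of $\orb_\psi$ is a singular point of the $\psi$-foliations, or for which a $\pi_1(M)$-translate of $\wt{\alpha_0}$ is trapped by the relevant lozenge configuration --- and then, using the finiteness above, to check that no $\alpha_n$ works. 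Producing such orbits should be relatively soft, using density of periodic orbits of $\flot$ together with control of the induced configuration in $\orb_\psi$; the delicate point is to certify that the chosen configuration obstructs \emph{every} candidate strip rather than just the obvious ones.
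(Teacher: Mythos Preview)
Your proposal has a genuine gap precisely where you flag it: you never convert the clash-free condition on $\wt A$ into a combinatorial restriction, and the mechanism you sketch does not work as stated. Under the identification of $\orb_\psi$ with a leaf of $\hfs$, all the lines $c_n$ have the \emph{same} pair of ideal endpoints in $\univ$ (these are the universal-circle images of the ideal points of $\wt{\alpha_0}$, which by construction coincide for every $\wt{\alpha_n}$ in the chain). In a topological disk any two proper arcs with the same ideal endpoints are isotopic, so ``isotopy class of $c_n$'' carries no information about $n$; bounding the number of $g$-fixed points of $\orb_\psi$ does not, by itself, bound the number of admissible $n$. The finiteness of $g$-periodic orbits of $\psi$ is true but is not the lever the argument turns on.

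The paper's route avoids this entirely by interposing a sharp combinatorial criterion in the orbit space of the \emph{Anosov} flow rather than of $\psi$. Using Barbot's Birkhoff-annulus theory one shows that $\alpha_0$ and $\alpha_n$ are co-cylindrical if and only if the chain of lozenges $B(\wt{\alpha_0},\wt{\alpha_n})$ in $\orb$ is \emph{simple} (no $\pi_1(M)$-translate of a corner meets its interior). This is in turn equivalent, via the identification of $\univ$ with $\leafs/\eta^u\!\circ\eta^s$, to the statement that no translate $h\cdot(a^+,a^-)$ is linked with $(a^+,a^-)$ in $\univ$. The regulating pseudo-Anosov flow $\psi$ is then used only to \emph{produce} a single linking element $h$: one separates $a^+$ from $a^-$ by a non-singular $\psi$-unstable leaf, finds a nearby non-singular $\psi$-periodic point $A$ whose stable and unstable leaves put $a^+,a^-$ in opposite quadrants, and takes $h$ a generator of the stabilizer of $A$. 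One such $h$ places $h\cdot\wt{\alpha_0}$ inside some lozenge $L(\wt{\alpha_{k-1}},\wt{\alpha_k})$, and since $\eta$ commutes with $\pi_1(M)$ this obstructs simplicity for every chain of length $\geq k$; finiteness follows at once. For the existence of orbits with \emph{no} co-cylindrical partner, the paper does not look for degenerate $\psi$-configurations at all: it uses the Anosov closing lemma to produce a periodic orbit $\alpha$ that $2\varepsilon$-fills a flow box, so that two local strands of $\wt\alpha$ sit inside a single lozenge, forcing the very first lozenge $L(\wt{\alpha_0},\wt{\alpha_1})$ to be non-simple. Your plan for this last part is too vague to assess, and the concrete scenarios you list (e.g.\ $g$ acting without $\psi$-periodic point) are not obviously connected to non-simplicity of the $\phi$-lozenge chain.
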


To study this question we use a flow obtained from the geometry 
of the Anosov foliation. When the manifold is atoroidal (or hyperbolic), 
Thurston constructed a pseudo-Anosov flow which is transverse to the
stable foliation. 
This helps us establish the result. We also obtain some finer
properties about embedded annuli.
Notice that Theorem B is false for \emph{toroidal} manifold as can be easily seen by
considering
the geodesic flow of an hyperbolic surface.

Finally we prove that if $\gamma$ is a closed orbit of an
Anosov or pseudo-Anosov flow and $\wt{\gamma}$ is a lift to
the universal cover of the manifold, then $\wt{\gamma}$ is
unknotted in the universal cover.

\subsection*{Acknowledgements}
The first author would like to thank Patrick Foulon and Boris Hasselblatt for introducing him to this question and for the numerous and fruitful discussions afterwards.
Both authors thank Boris Hasselblatt for 
useful suggestions to an earlier version of this article.

\section{Background}

\subsection{Anosov flows}
Recall that an Anosov flow is defined as follows:

\begin{defin} \label{def:Anosov}
 Let $M$ be a compact manifold and $\flot \colon M \rightarrow M$ a $C^{1}$ flow on $M$. The flow $\flot$ is called Anosov if there exists a splitting of the tangent bundle ${TM =  \R\cdot X \oplus E^{ss} \oplus E^{uu}}$ preserved by $D\flot$ and two constants $a,b >0$ such that:
\begin{enumerate}
 \item $X$ is the generating vector field of $\flot$;
 \item For any $v\in E^{ss}$ and $t>0$,
    \begin{equation*}
     \lVert D\flot(v)\rVert \leq be^{-at}\lVert v \rVert \, ;
    \end{equation*}
  \item For any $v\in E^{uu}$ and $t>0$,
    \begin{equation*}
     \lVert D\phi^{-t}(v)\rVert \leq be^{-at}\lVert v \rVert\, .
    \end{equation*}
\end{enumerate}
In the above, $\lVert \cdot \rVert$ is any Riemannian (or Finsler) metric on $M$.
\end{defin}

The subbundle $E^{ss}$ (resp. $E^{uu}$) is called the \emph{strong stable distribution} (resp.
\emph{strong unstable distribution}). It is a classical result of 
Anosov (\cite{Anosov}) that $E^{ss}$, $E^{uu}$, $\R\cdot X \oplus E^{ss}$ and $\R\cdot X \oplus E^{uu}$ are integrable. We denote by $\mathcal{F}^{ss}$, $\mathcal{F}^{uu}$, $\fs$ and $\fu$ the respective foliations and we call them the strong stable, strong unstable, stable and unstable foliations.

In all the following, if $x \in M$, then $\fs(x)$ (resp. $\fu(x)$) is the leaf of the foliation $\fs$ (resp. $\fu$) containing $x$.

\subsection{Leaf and orbit spaces}
Our study of the Anosov flow will be through its foliations. 
Let $\M$ be the universal cover of $M$ and $\pi \colon \M \rightarrow M$ the canonical projection. The flow $\flot$ and all the foliations lift to $\M$ and we denote them respectively by $\hflot$, $\widetilde{\mathcal{F}}^{ss} $, $\hfs$, $\widetilde{\mathcal{F}}^{uu}$ and $\hfu$. Now we can define
\begin{itemize}
 \item The \emph{orbit space} of $\flot$ as $\M$ quotiented out by the relation ``being on the
same orbit of $\hflot$ ''. We denote it by $\orb$.
 \item The \emph{stable} (resp. \emph{unstable}) \emph{leaf space} of $\flot$ as $\M$ quotiented out by the relation ``being on the same leaf of $\hfs$ (resp. $\hfu$)''. We denote them by $\leafs$ and $\leafu$ respectively.
\end{itemize}
Note that the foliations $\hfs$ and $\hfu$ obviously project to two transverse 
one dimensional foliations of $\orb$. We will keep the same notations for the projected foliations, hoping that it will not lead to any confusion.

Thierry Barbot and the second author started the study of Anosov flows through their orbit
spaces, in addition to analyzing the leaf spaces.
They proved that the orbit space is always homeomorphic to $\R^2$ \cite{Bar:CFA,Fen:AFM}. 
Prior to that, both Verjovsky \cite{Ver:codim1} and Ghys \cite{Ghys:varietes_fibrees_en_cercles} had used the study
of the leaf spaces to obtain important results about Anosov flows.

\subsection{$\R$-covered Anosov flows}
If one considers the topology of the leaf spaces, differences between flows start to appear. In this article, we are only interested in one special kind:

\begin{defin}[Barbot \cite{Bar:these}, Fenley \cite{Fen:AFM}]
 An Anosov flow is said to be $\R$-covered if $\leafs$, or equivalently $\leafu$, is homeomorphic to $\R$.
\end{defin}

For the results of this article
we will furthermore assume that the stable foliation is \emph{transversely oriented}, i.e., 
the foliation $\mathcal{F}^{uu}$ is orientable.
In the case of skewed, $\R$-covered Anosov flows which will be studied 
in this paper, this is equivalent to the unstable foliation (the
other foliation) being transversely oriented. In particular, any orientation on the stable and
unstable foliations gives an orientation on $\leafs$ and $\leafu$. We will 
assume that such an orientation is chosen.

In terms of the structure of the stable and unstable foliations in the
orbit space
there are only two types of $\R$-covered Anosov flows. 
This depends on whether or not there exists a stable leaf 
of the lifted flow $\hflot$ intersecting every unstable leaf.
 In addition, Barbot \cite{Bar:CFA} proved the very strong
result that either the flow is topologically conjugate to a suspension of an Anosov
diffeomorphism, or the orbit space $\orb$ is homeomorphic to a diagonal strip $|x-y|<1$ in $\R^2$
where the unstable leaves are given by $x=\text{const}$ and the stable leaves are given by
$y=\text{const}$ (see Figure \ref{fig:lozenge_def_by_orbit}). This second type of $\R$-covered
Anosov flows is called \emph{skewed} and these flows are the object of our study.

\subsection{Lozenges}

A fundamental object in the study of (pseudo-)Anosov flows is a 
lozenge. First we define a \emph{half leaf} of a leaf $H$  of $\hfs$ or $\hfu$ as a component of
$H - \gamma$, where $\gamma$ is any orbit in $H$.

\begin{defin}
 A lozenge $L$ in $\orb$ is an open subset of $\orb$ such that (see Figure \ref{fig:a_lozenge}):\\
There exist two points $\alpha,\beta \in \orb$ and four half leaves $A \subset \hfs(\alpha)$, $B \subset \hfu(\alpha)$, $C \subset \hfs(\beta)$ and $D \subset \hfu(\beta)$ satisfying:
\begin{itemize}
 \item For any $\lambda^s \in \leafs$, $\lambda^s \cap B \neq \emptyset$ if and only if $\lambda^s \cap D\neq \emptyset$,
 \item For any $\lambda^u \in \leafu$, $\lambda^u \cap A \neq \emptyset$ if and only if $\lambda^u \cap C \neq \emptyset$,
 \item The half-leaf $A$ does not intersect $D$ and $B$ does not intersect $C$.
\end{itemize}
Then,
\begin{equation*}
 L := \lbrace p \in \orb \mid \hfs(p) \cap B \neq \emptyset, \; \hfu(p) \cap A \neq \emptyset \rbrace.
\end{equation*}
The points $\alpha$ and $\beta$ are called the \emph{corners} of $L$ and $A,B,C$ and $D$ are called the \emph{sides}.
\end{defin}

\begin{figure}[h]
\begin{center}

\begin{pspicture}(0,-1.97)(3.92,1.97)
\psbezier[linewidth=0.04](0.02,0.17)(0.88,0.03)(0.74114233,-0.47874346)(1.48,-1.11)(2.2188578,-1.7412565)(2.38,-1.73)(3.26,-1.95)
\psbezier[linewidth=0.04](0.0,0.27)(0.96,0.3105634)(0.75286174,0.37057108)(1.78,0.77)(2.8071382,1.169429)(2.66,1.47)(3.36,1.45)
\psbezier[linewidth=0.04](0.6,1.95)(1.2539726,1.8871263)(1.1265805,1.3646309)(2.0345206,0.7973154)(2.9424605,0.23)(3.2249315,0.2543258)(3.9,0.31)
\psbezier[linewidth=0.04](0.52,-1.33)(1.48,-1.33)(1.3597014,-0.9703507)(2.3,-0.63)(3.2402985,-0.28964934)(3.14,0.05)(3.84,0.23)
\psdots[dotsize=0.16](1.98,0.85)
\psdots[dotsize=0.16](1.46,-1.11)
\usefont{T1}{ptm}{m}{n}
\rput(1.6145313,-0.06){$L$}
\usefont{T1}{ptm}{m}{n}
\rput(1.4,-1.38){$\alpha$}
\rput(2,1.2){$\beta$}

\rput(0.6,-0.6){$A$}
\rput(3,-0.6){$B$}
\rput(0.6,0.6){$D$}
\rput(3,0.6){$C$}
\end{pspicture} 
\end{center}
\caption{A lozenge with corners $\alpha$, $\beta$ and sides $A,B,C,D$} \label{fig:a_lozenge}
\end{figure}
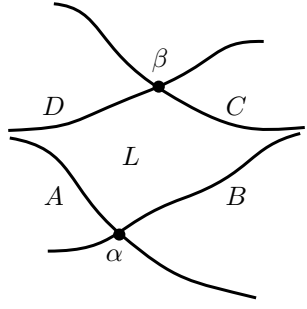

One of the most important properties of lozenges is the following:
\begin{prop}[Fenley \cite{Fen:AFM}] \label{prop:stabilized_lozenge}
 Let $L$ be a lozenge of $\flot$. If one corner of $L$ is stabilized by an element $g$
 of the fundamental group and so are the two sides of $L$ abutting at this corner, 
then $g$ stabilizes $L$, the other corner and the other sides.
\end{prop}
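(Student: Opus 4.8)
The plan is to exploit the fact that, in the definition given above, the open set $L$ depends only on the two half-leaves $A$ and $B$, and then to recover the remaining data --- the second corner $\beta$ and the two opposite sides $C,D$ --- from $L$ alone by a purely topological argument.

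First I would record the elementary point that any deck transformation commutes with $\hflot$, hence sends stable leaves to stable leaves and unstable leaves to unstable leaves; in particular $\hfs(g\cdot p)=g\cdot\hfs(p)$ and $\hfu(g\cdot p)=g\cdot\hfu(p)$ for every $p\in\orb$, and $g$ does not interchange the two foliations. Now assume $g$ stabilizes the corner $\alpha$ together with the two abutting sides $A\subset\hfs(\alpha)$ and $B\subset\hfu(\alpha)$, i.e. $g(A)=A$ and $g(B)=B$. From the defining formula $L=\{\,p\in\orb \mid \hfs(p)\cap B\neq\emptyset,\ \hfu(p)\cap A\neq\emptyset\,\}$ and the identities above, for any $p$ we have $\hfs(g\cdot p)\cap B=g\cdot\hfs(p)\cap g(B)=g\cdot(\hfs(p)\cap B)$, which is nonempty exactly when $\hfs(p)\cap B$ is, and likewise for the unstable condition; hence $g(L)=L$. (Note also that $\alpha$ is the unique common endpoint $\overline{A}\cap\overline{B}$, so $g(\alpha)=\alpha$ is in fact already forced by $g(A)=A$ and $g(B)=B$.)

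Next I would argue that the corners and sides of $L$ are intrinsic to $L$ as a subset of $\orb$. Since $\orb\cong\R^2$ and $\hfs,\hfu$ are topologically transverse one-dimensional foliations, the frontier $\overline{L}\setminus L$ is the union of the four half-leaves $A,B,C,D$ and the two points $\alpha,\beta$ (as in Figure~\ref{fig:a_lozenge}). The corners $\alpha,\beta$ are characterized intrinsically as the only two frontier points having no neighborhood inside $\overline{L}\setminus L$ that is contained in a single leaf of $\hfs$ or of $\hfu$: at $\alpha$ every such neighborhood meets both $\hfs(\alpha)$ and $\hfu(\alpha)\neq\hfs(\alpha)$, whereas an interior point of one of the four half-leaves has a neighborhood contained in the corresponding leaf. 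The four sides are then exactly the connected components of $(\overline{L}\setminus L)\setminus\{\alpha,\beta\}$. Since $g$ is a homeomorphism of $\orb$ with $g(L)=L$ that preserves each of $\hfs,\hfu$, it permutes $\{\alpha,\beta\}$ and permutes the four sides; as $g(\alpha)=\alpha$ and $\alpha\neq\beta$ we get $g(\beta)=\beta$. Finally, recalling that $\alpha$ and $\beta$ lie on distinct stable leaves and on distinct unstable leaves, $A$ (resp. $C$) is the unique side lying in a stable leaf through $\alpha$ (resp. $\beta$) and $B$ (resp. $D$) the unique side lying in an unstable leaf through $\alpha$ (resp. $\beta$); since $g$ fixes both corners and preserves the two foliations separately, it must fix each of $A,B,C,D$. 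This gives the statement.

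The only step needing genuine care is the intrinsic identification of the corners and sides from the set $L$; once that is set up, everything is forced and no orientation hypothesis is required --- the argument uses only that $g$ preserves the pair of foliations and stabilizes $A$ and $B$ setwise. One could alternatively avoid the frontier description by showing directly that the second corner $\beta$ (and with it $C,D$) is already determined by $A$ and $B$ through the two intersection conditions in the definition of a lozenge, but the frontier argument seems the cleanest to write down.
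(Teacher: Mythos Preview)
The paper does not give its own proof of this proposition; it is quoted from \cite{Fen:AFM} without argument. Your proof is correct and self-contained, so there is nothing to compare against in this particular paper.

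A small comment on presentation: the frontier argument works, but you can shortcut it. Once you have $g(L)=L$ from the defining formula, observe directly that the set $\{\lambda^u\in\leafu : \lambda^u\cap A\neq\emptyset\}$ is $g$-invariant (since $g(A)=A$), and this set of unstable leaves has exactly two boundary leaves in the unstable leaf space, namely $\hfu(\alpha)$ and $\hfu(\beta)$. As $g$ already fixes $\hfu(\alpha)\supset B$, it must fix $\hfu(\beta)$. The symmetric argument with $B$ gives $g(\hfs(\beta))=\hfs(\beta)$, hence $g(\beta)=\beta$, and then $C,D$ are forced since $g$ fixes $\beta$ and does not swap the foliations. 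This avoids having to argue carefully that $\partial L$ is exactly $A\cup B\cup C\cup D\cup\{\alpha,\beta\}$ --- which is true, but in a general (non--$\R$-covered) orbit space the leaf spaces are non-Hausdorff and one should say a word about why no extra boundary leaves appear. Your intrinsic-corner characterization is fine once that is checked; the leaf-space version sidesteps it.

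Your closing remark that no transverse-orientability hypothesis is needed is correct and matches the paper's own comment immediately after the proposition: in the skewed $\R$-covered, transversely orientable case the side hypothesis is automatic, but the proposition itself holds without it.
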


In the case of a skewed, $\R$-covered Anosov flow with transversely orientable
stable foliation, the hypothesis about the sides
of the lozenge is always satisfied as soon a $g$ leaves
invariant a corner of a lozenge $L$.

\begin{defin}
 A chain of lozenges is a union (finite or infinite) of lozenges $L_i$ such that two consecutive lozenges $L_i$ and $L_{i+1}$ always share a corner.
\end{defin}

There are basically two configurations for consecutive lozenges in a chain: either they share a side,
or they do not.
The first case is characterized by the fact that there exists a leaf intersecting the
interior of both lozenges, while it cannot happen in the second case because a stable leaf in $\M$
cannot intersects an unstable leaf in more than one orbit (\cite{Ver:codim1}, 
explicit proof in \cite{Fen:AFM} or \cite{Fen:Foliations_TG3M}). 
(See Figure \ref{fig:chain_of_lozenges}).
\begin{figure}[h]
\begin{subfigure}[b]{0.45\textwidth}
\centering
  \scalebox{0.8} { 
\begin{pspicture}(-0.4,-3.8)(4.8,4)
\psdots[dotsize=0.16](2.44,2.6)
\psbezier[linewidth=0.04](0.38,0.18)(1.24,0.04)(1.1011424,-0.46874347)(1.84,-1.1)(2.5788577,-1.7312565)(2.74,-1.72)(3.62,-1.94)
\psbezier[linewidth=0.04](0.36,0.28)(1.32,0.32056338)(1.1128618,0.38057107)(2.14,0.78)(3.1671383,1.1794289)(3.56,1.48)(4.26,1.46)
\psbezier[linewidth=0.04](0.96,1.96)(1.6139725,1.8971263)(1.4865805,1.3746309)(2.3945205,0.80731547)(3.3024607,0.24)(3.5849316,0.26432583)(4.26,0.32)
\psbezier[linewidth=0.04](0.24,-1.64)(1.16,-1.62)(1.7197014,-0.9603507)(2.66,-0.62)(3.6002986,-0.27964935)(3.5,0.06)(4.2,0.24)
\psdots[dotsize=0.16](2.34,0.86)
\psdots[dotsize=0.16](1.82,-1.06)
\psbezier[linewidth=0.04](0.98,3.46)(1.5739726,3.4171262)(1.5465806,3.1746309)(2.4545205,2.6073155)(3.3624606,2.04)(3.64,1.6)(4.22,1.58)
\psbezier[linewidth=0.04](1.0,2.06)(1.96,2.06)(1.8397014,2.4196494)(2.78,2.76)(3.7202985,3.1003506)(3.62,3.44)(4.32,3.62)
\psbezier[linewidth=0.04](0.22,-1.74)(0.8139726,-1.7828737)(0.7865805,-2.0253692)(1.6945206,-2.5926845)(2.6024606,-3.16)(2.88,-3.6)(3.46,-3.62)
\psbezier[linewidth=0.04](0.0,-3.54)(0.94,-3.4394367)(0.54,-3.22)(1.44,-2.74)(2.34,-2.26)(2.86,-2.04)(3.56,-2.06)
\psdots[dotsize=0.16](1.72,-2.62)
\psbezier[linewidth=0.04,linestyle=dashed,dash=0.16cm 0.16cm](0.06,-2.94)(0.54,-2.9)(0.8125543,-2.2401693)(1.7,-1.84)(2.5874457,-1.4398307)(3.08,-0.86)(4.04,-1.06)
\end{pspicture} }
  \caption{Lozenges sharing only corners}
\label{fig:string_lozenges} 
\end{subfigure}
\quad
\begin{subfigure}[b]{0.45\textwidth}
\centering
\scalebox{0.8}{
\begin{pspicture}(6.6,-3.6)(13.8,2)
\psbezier[linewidth=0.04](7.0,-0.32)(7.54,-0.72)(7.6411424,-0.56874347)(8.38,-1.2)(9.118857,-1.8312565)(9.64,-1.6)(10.18,-2.08)
\psbezier[linewidth=0.04](6.78,-1.74)(7.7,-1.72)(7.8397017,-1.2803507)(8.86,-0.86)(9.880299,-0.43964934)(9.72,-0.42)(9.72,0.06)
\psdots[dotsize=0.16](8.32,-1.16)
\psbezier[linewidth=0.04](6.78,-1.76)(7.3739724,-1.8028737)(7.3665805,-1.9853691)(8.25452,-2.6126845)(9.142461,-3.24)(8.6,-2.84)(9.0,-3.14)
\psbezier[linewidth=0.04](7.48,-3.22)(8.32,-2.74)(7.56,-3.12)(8.48,-2.62)(9.4,-2.12)(9.46,-1.82)(10.1,-2.18)
\psdots[dotsize=0.16](8.38,-2.7)
\psbezier[linewidth=0.04](9.84,0.16)(9.92,-0.4)(10.641142,-0.16874346)(11.4,-0.8)(12.158857,-1.4312565)(12.24,-1.44)(13.14,-1.64)
\psbezier[linewidth=0.04](10.26,-1.98)(9.98,-1.5)(10.66,-1.44)(11.56,-0.96)(12.46,-0.48)(12.98,-0.26)(13.68,-0.28)
\psbezier[linewidth=0.04](10.46,1.56)(10.52,0.96)(11.121142,1.3112565)(11.88,0.68)(12.638858,0.048743468)(12.72,0.04)(13.62,-0.16)
\psbezier[linewidth=0.04](9.98,0.22)(10.62,-0.18)(10.26,0.28)(11.12,0.78)(11.98,1.28)(11.5,0.96)(12.14,1.4)
\psdots[dotsize=0.16](11.43,0.95)
\psdots[dotsize=0.16](11.58,-0.95)
\psbezier[linewidth=0.04,linestyle=dashed,dash=0.16cm 0.16cm](6.84,-2.72)(7.24,-2.7)(7.572743,-2.4578607)(8.44,-1.98)(9.307257,-1.5021392)(8.894286,-1.4042312)(9.74,-0.92)(10.585714,-0.4357688)(10.980949,-0.86126333)(11.76,-0.28)(12.539051,0.30126333)(12.3,0.7)(12.76,0.9)
\end{pspicture} }
\caption{Lozenges sharing sides} 
\label{fig:adjacent_lozenges}
\end{subfigure}
\caption{The two types of consecutive lozenges in a chain} \label{fig:chain_of_lozenges}
\end{figure}
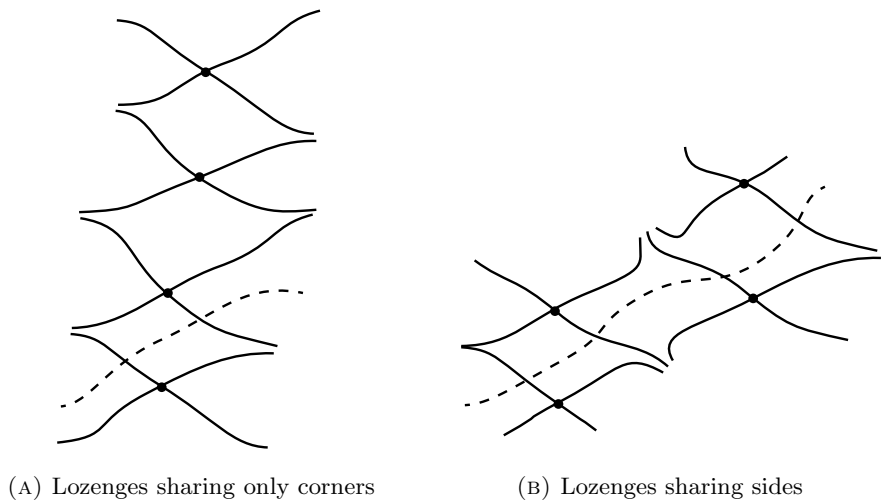

Chains of lozenges in $\R$-covered Anosov flow are particularly nice, because they never share sides:
\begin{prop}[Fenley \cite{Fen:AFM}]
 Let $\flot$ be a skewed $\R$-covered Anosov flow, and $C = \bigcup L_i$ a chain of lozenges. Let $p_{i}$ be the shared corner between $L_i$ and $L_{i+1}$.
Then the union of $p_i$ 
and the sides through $p_i$ of $L_i$ and $L_{i+1}$ is $\hfs(p_i) \cup \hfu(p_i)$. In other words, consecutive lozenges never share a side. In particular, an (un)stable leaf cannot intersect the interior of more than one lozenge in $C$.
\end{prop}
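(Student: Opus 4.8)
The plan is to carry out the entire argument inside Barbot's model of the orbit space. Since $\flot$ is skewed and $\R$-covered, the structure theorem quoted above identifies $\orb$ with the strip $\{(x,y)\in\R^2 : |x-y|<1\}$, the leaves of $\hfu$ being the vertical segments $x=\mathrm{const}$ and the leaves of $\hfs$ the horizontal segments $y=\mathrm{const}$, each leaf being all of the open segment cut out of the line by the strip. My first step is to read off the shape of a lozenge in this picture. If $\alpha=(a_1,a_2)$ is a corner, the two half-leaves of $\hfs(\alpha)$ are the rays $\{(x,a_2): a_1<x<a_2+1\}$ and $\{(x,a_2): a_2-1<x<a_1\}$, and likewise for $\hfu(\alpha)$; substituting these into the two ``equal shadow'' conditions in the definition of a lozenge, a short computation shows that the only two possibilities for the second corner are $\beta=(a_2+1,a_1+1)$ (call this the lozenge \emph{opening up-right} from $\alpha$) and $\beta=(a_2-1,a_1-1)$ (\emph{opening down-left}), and one checks directly that in each case all the lozenge axioms hold, with $L$ the open rectangle having $\alpha,\beta$ as opposite corners.

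The key structural fact I want to extract is a rigidity statement: \emph{a lozenge is determined by one of its corners together with the choice of which of these two rectangles it is}; equivalently, a point $p\in\orb$ is a corner of at most one lozenge opening up-right from it and of at most one opening down-left. This is exactly where the width of the strip being \emph{$1$} is used: the down half-leaf of $\hfu(\beta)$ runs in the $y$-coordinate from $b_1-1$ to $b_2$, so matching both of its endpoints to those of the up half-leaf of $\hfu(\alpha)$ pins $\beta$ down uniquely (by contrast, in the suspension case $\orb=\R^2$ this matching has no solution with $\alpha\ne\beta$, which is why suspensions have no lozenges at all). I also record, for the last assertion of the proposition, that the set of stable leaves meeting the interior of the up-right lozenge at $p=(u,v)$ is exactly $\{\,y=c : v<c<u+1\,\}$ and the set of unstable leaves meeting its interior is $\{\,x=c : u<c<v+1\,\}$, with the mirror statement for the down-left lozenge.

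Now let $C=\bigcup L_i$ be a chain and $p_i$ the corner shared by $L_i$ and $L_{i+1}$; we may assume $L_i\ne L_{i+1}$, since otherwise there is nothing to prove. From $p_i$, each of $L_i$ and $L_{i+1}$ opens either up-right or down-left; if they opened the same way, the rigidity statement would give $L_i=L_{i+1}$, a contradiction. So, after relabelling, $L_i$ opens down-left from $p_i$ and $L_{i+1}$ opens up-right from $p_i$. Then the two sides of $L_i$ at $p_i$ are the ``left'' half-leaf of $\hfs(p_i)$ and the ``down'' half-leaf of $\hfu(p_i)$, while the two sides of $L_{i+1}$ at $p_i$ are the ``right'' half-leaf of $\hfs(p_i)$ and the ``up'' half-leaf of $\hfu(p_i)$; together with the point $p_i$ these four rays are exactly $\hfs(p_i)\cup\hfu(p_i)$, and in particular $L_i$ and $L_{i+1}$ have no common side. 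For the final statement, the same relabelling makes $L_i$ the up-right lozenge at its lower corner, say $(u,v)$, so $p_i=(v+1,u+1)$, and since $L_{i+1}$ opens up-right from $p_i$ it is the up-right lozenge at $p_i$; by the formula above, the stable leaves meeting the interior of $L_i$ form the open interval $v<c<u+1$ while those meeting $L_{i+1}$ form $u+1<c<v+2$, which is disjoint from it, and running this down the chain in both directions the corresponding open intervals $\dots,(v,u+1),(u+1,v+2),(v+2,u+3),\dots$ are pairwise disjoint, since consecutive ones abut only at an endpoint. The same argument with the $x$-coordinate handles unstable leaves, so no (un)stable leaf can meet the interiors of two lozenges of $C$.

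I expect the crux to be the rigidity lemma — that a corner plus an opening direction determines the whole lozenge — since everything else is bookkeeping once the strip model is in hand; the one place that genuinely needs care is invoking the width-exactly-$1$ property correctly, and double-checking that the two candidate rectangles attached to each corner really do satisfy all of the lozenge axioms (not just the two shadow conditions, but also that the designated sides are pairwise non-intersecting as required).
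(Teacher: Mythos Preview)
Your argument is correct. Note, however, that the paper itself does not supply a proof of this proposition: it is quoted as background from \cite{Fen:AFM}, so there is no in-paper argument to compare against directly.

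That said, your approach is exactly in the spirit of the surrounding material. The paper later records (again citing Fenley) that every orbit is a corner of precisely two lozenges, and describes the map $\eta\colon\orb\to\orb$ sending a corner to the opposite corner of its ``up-right'' lozenge; in Barbot's strip model this map is $(a_1,a_2)\mapsto(a_2+1,a_1+1)$, which is exactly what you derived by hand from the lozenge axioms. Your rigidity lemma (a corner plus an opening direction determines the lozenge) is the strip-model incarnation of the statement that $\eta$ and $\eta^{-1}$ are well defined and give \emph{the} two lozenges at each point. Once that is in place, the rest --- that the four half-leaves at $p_i$ used by $L_i$ and $L_{i+1}$ are complementary, and that the $y$-intervals $(v,u+1),(u+1,v+2),(v+2,u+3),\dots$ of stable leaves meeting successive interiors abut without overlapping --- is, as you say, bookkeeping, and your computation is accurate.

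One cosmetic remark: when you write ``we may assume $L_i\ne L_{i+1}$, since otherwise there is nothing to prove,'' note that the first conclusion of the proposition (that the four sides fill out $\hfs(p_i)\cup\hfu(p_i)$) would actually fail if $L_i=L_{i+1}$; the implicit convention in the paper's definition of a chain is that consecutive lozenges are distinct, so this is harmless, but it is worth phrasing as an observation about the definition rather than as a degenerate case.
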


Suppose that 
$C = \bigcup L_i$ is a chain of lozenges such that one of its corner $\al 0$ is a lift of a
periodic orbit, so that there exists $\gamma \in \pi_1(M)$ stabilizing $\al 0$.
By Proposition \ref{prop:stabilized_lozenge}, 
$\gamma$ stabilizes $C$ and therefore all its corners are lifts of (possibly the same) periodic
orbit. A representation on how $\gamma $ acts on the chain of lozenges is given in Figure \ref{fig:gamma_stabilizing_chain}.
\begin{figure}[h]
\begin{center}
\scalebox{0.8}{
\begin{pspicture}(0,0)(6,6)
\rput(2.51,0.4){$\al i$}
\rput(2.51,2.05){$\al{i+1}$}
\rput(2.51,3.7){$\al{i+2}$}
\psset{arrowscale=2}
\psbezier[linewidth=0.04,ArrowInside=->,ArrowInsidePos=0.30](0,0)(2,0)(3,1.4)(5,1.6)
\psbezier[linewidth=0.04,ArrowInside=-<,ArrowInsidePos=0.75](0,0)(2,0)(3,1.4)(5,1.6)

\psbezier[linewidth=0.04,ArrowInside=->,ArrowInsidePos=0.75](5,0)(3,0)(2,1.4)(0,1.6)
\psbezier[linewidth=0.04,ArrowInside=-<,ArrowInsidePos=0.30](5,0)(3,0)(2,1.4)(0,1.6)

\psdots[dotsize=0.2](2.51,0.73)

\rput(0,1.65){
\psbezier[linewidth=0.04,ArrowInside=-<,ArrowInsidePos=0.75](5,0)(3,0)(2,1.4)(0,1.6)
\psbezier[linewidth=0.04,ArrowInside=->,ArrowInsidePos=0.30](5,0)(3,0)(2,1.4)(0,1.6)

\psdots[dotsize=0.2](2.51,0.73)

\psbezier[linewidth=0.04,ArrowInside=-<,ArrowInsidePos=0.30](0,0)(2,0)(3,1.4)(5,1.6)
\psbezier[linewidth=0.04,ArrowInside=->,ArrowInsidePos=0.75](0,0)(2,0)(3,1.4)(5,1.6)
}

\rput(0,3.3){
\psbezier[linewidth=0.04,ArrowInside=->,ArrowInsidePos=0.75](5,0)(3,0)(2,1.4)(0,1.6)
\psbezier[linewidth=0.04,ArrowInside=-<,ArrowInsidePos=0.30](5,0)(3,0)(2,1.4)(0,1.6)

\psdots[dotsize=0.2](2.51,0.73)

\psbezier[linewidth=0.04,ArrowInside=->,ArrowInsidePos=0.30](0,0)(2,0)(3,1.4)(5,1.6)
\psbezier[linewidth=0.04,ArrowInside=-<,ArrowInsidePos=0.75](0,0)(2,0)(3,1.4)(5,1.6)
}
\end{pspicture}}
\end{center}
\caption{The action of an element $\gamma \in \pi_1(M)$ stabilizing a chain of lozenges} \label{fig:gamma_stabilizing_chain}
\end{figure}
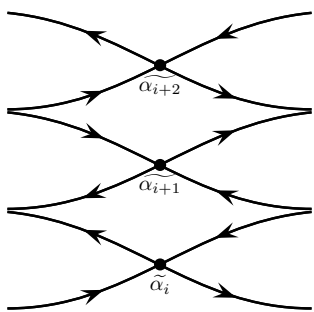

\begin{rem} 
Looking at the orientation of the sides of a lozenge, we can see that they come in two different types.\\
As $\fs$ is assumed transversely orientable, we can chose an orientation on each leaf of $\hfs$
and $\hfu$ when projected
to $\orb$. So any orbit defines two stable half leafs (positive and negative) and two unstable
half leafs. Let now $p$ be a corner of a lozenge $L$. The sides of $L$ going through $p$ --- call them $A$ for the stable and $B$ for the unstable --- could either be both positive, both negative, or of different signs. It is quite easy to see that the stable (resp. unstable) side of the other corner needs to have switched sign from $B$ (resp. from $A$). So each lozenge could be of two types, either $(+,+,-,-)$ or $(+,-,-,+)$, but evidently, all the lozenges of the same (transversally orientable) flow are of the same type (\cite{Fen:AFM}).
\end{rem}
By changing the transverse orientation of one of stable or unstable foliation we may assume without loss of generality that lozenges are of type $(+,+,-,-)$ (see Figure \ref{fig:lozenge++--}).

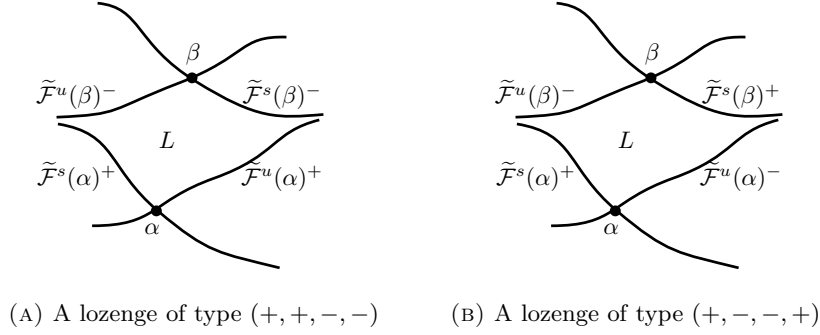
\begin{figure}[h]
\begin{subfigure}[b]{0.45\textwidth}
\centering
 \scalebox{0.9} {
\begin{pspicture}(-1,-2.2)(5,2)
\psbezier[linewidth=0.04](0.02,0.17)(0.88,0.03)(0.74114233,-0.47874346)(1.48,-1.11)(2.2188578,-1.7412565)(2.38,-1.73)(3.26,-1.95)
\psbezier[linewidth=0.04](0.0,0.27)(0.96,0.3105634)(0.75286174,0.37057108)(1.78,0.77)(2.8071382,1.169429)(2.66,1.47)(3.36,1.45)
\psbezier[linewidth=0.04](0.6,1.95)(1.2539726,1.8871263)(1.1265805,1.3646309)(2.0345206,0.7973154)(2.9424605,0.23)(3.2249315,0.2543258)(3.9,0.31)
\psbezier[linewidth=0.04](0.52,-1.33)(1.48,-1.33)(1.3597014,-0.9703507)(2.3,-0.63)(3.2402985,-0.28964934)(3.14,0.05)(3.84,0.23)
\psdots[dotsize=0.16](1.98,0.85)
\psdots[dotsize=0.16](1.46,-1.11)
\usefont{T1}{ptm}{m}{n}
\rput(1.6145313,-0.06){$L$}
\usefont{T1}{ptm}{m}{n}
\rput(1.4,-1.38){$\alpha$}
\rput(2,1.2){$\beta$}

\rput(0.3,-0.6){$\hfs(\alpha)^+$}
\rput(3.3,-0.6){$\hfu(\alpha)^+ $}
\rput(0.3,0.6){$\hfu(\beta)^-$}
\rput(3.3,0.6){$\hfs(\beta)^-$}
\end{pspicture} 
}
 \caption{A lozenge of type $(+,+,-,-)$}
\label{fig:lozenge++--} 
\end{subfigure}
\begin{subfigure}[b]{0.45\textwidth}
 \centering
\scalebox{0.9} {
\begin{pspicture}(-1,-2.2)(4.5,2)
\psbezier[linewidth=0.04](0.02,0.17)(0.88,0.03)(0.74114233,-0.47874346)(1.48,-1.11)(2.2188578,-1.7412565)(2.38,-1.73)(3.26,-1.95)
\psbezier[linewidth=0.04](0.0,0.27)(0.96,0.3105634)(0.75286174,0.37057108)(1.78,0.77)(2.8071382,1.169429)(2.66,1.47)(3.36,1.45)
\psbezier[linewidth=0.04](0.6,1.95)(1.2539726,1.8871263)(1.1265805,1.3646309)(2.0345206,0.7973154)(2.9424605,0.23)(3.2249315,0.2543258)(3.9,0.31)
\psbezier[linewidth=0.04](0.52,-1.33)(1.48,-1.33)(1.3597014,-0.9703507)(2.3,-0.63)(3.2402985,-0.28964934)(3.14,0.05)(3.84,0.23)
\psdots[dotsize=0.16](1.98,0.85)
\psdots[dotsize=0.16](1.46,-1.11)
\usefont{T1}{ptm}{m}{n}
\rput(1.6145313,-0.06){$L$}
\usefont{T1}{ptm}{m}{n}
\rput(1.4,-1.38){$\alpha$}
\rput(2,1.2){$\beta$}

\rput(0.3,-0.6){$\hfs(\alpha)^+$}
\rput(3.3,-0.6){$\hfu(\alpha)^- $}
\rput(0.3,0.6){$\hfu(\beta)^-$}
\rput(3.3,0.6){$\hfs(\beta)^+$}
\end{pspicture} 
}
\caption{A lozenge of type $(+,-,-,+)$}
\label{fig:lozenge+-+-} 
\end{subfigure}
 \caption{The two possible orientations of lozenges} \label{fig:type_of_lozenge}
\end{figure}

%
%

\subsection{Free homotopy classes of periodic orbits}

One of the reasons why
lozenges are particularly important in the study of $\R$-covered Anosov flows is that they are very common:
\begin{prop}[Fenley \cite{Fen:AFM}]
 Let $\flot$ be a skewed, $\R$-covered Anosov flow. Then, any orbit is a corner of two distinct lozenges.
\end{prop}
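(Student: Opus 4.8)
The plan is to exploit the skewed structure of the orbit space as the diagonal strip $|x-y|<1$, together with the general result that any orbit sits in some lozenge. Let me work out the argument.

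\textbf{Proof plan.} Fix an orbit $p \in \orb$. By Barbot's theorem, we may take coordinates on $\orb$ in which $\orb$ is the strip $|x-y|<1$, with unstable leaves $\{x=\text{const}\}$ and stable leaves $\{y=\text{const}\}$, and $p = (a,a)$ say after translating. First I would produce \emph{one} lozenge with corner $p$ by hand: consider the half-leaves $B^+ \subset \hfu(p)$ and $A^+ \subset \hfs(p)$ on the positive side. Because the strip has width exactly $2$ in the transverse direction, the unstable leaves meeting $A^+$ form a half-open interval of $x$-values, and the stable leaves meeting $B^+$ form a half-open interval of $y$-values; their ``intersection pattern'' picks out a second point $q$ — concretely, travelling along the stable leaf of $p$ in the positive direction one reaches the ``edge'' of the strip, and the unstable leaf through the corresponding boundary point, pushed back, determines $q = (a+1, a-1)$ (or the symmetric point), which is again inside the strip and is a genuine corner. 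One then checks the three bulleted conditions in the definition of a lozenge: the matching conditions on which leaves meet $B$ vs.\ $D$ and $A$ vs.\ $C$ are immediate from the coordinate description of the strip, and the non-intersection condition $A \cap D = \emptyset = B \cap C$ follows since a stable and an unstable leaf in $\M$ meet in at most one orbit. This gives the first lozenge $L_1$.

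For the second lozenge, I would repeat the construction on the \emph{other} pair of half-leaves: use $B^- \subset \hfu(p)$ and $A^- \subset \hfs(p)$, the negative half-leaves at $p$. The same argument produces a corner $q' = (a-1, a+1)$ and a lozenge $L_2$ with corner $p$. The key point is that $L_1$ and $L_2$ are \emph{distinct}: their interiors lie on opposite sides of the orbit $p$ in $\orb$ (one uses the positive unstable half-leaf of $p$ as a side, the other the negative), so they cannot coincide. More carefully, the side of $L_1$ through $p$ inside $\hfu(p)$ is $B^+$ while the side of $L_2$ through $p$ inside $\hfu(p)$ is $B^-$, and these are different half-leaves; since a lozenge is determined by its corners and sides, $L_1 \neq L_2$.

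The main obstacle — and the step deserving the most care — is verifying that the candidate second corner $q$ actually lies in $\orb$ and that the intersection conditions hold for \emph{all} leaves, not just generic ones; this is exactly where the skewed hypothesis (width-$1$ strip, as opposed to a suspension where stable leaves meet every unstable leaf) is used, since in the suspension case the analogous construction degenerates and no lozenge appears. I expect the clean way to phrase this is via the ``perfect fits'' language: the positive stable half-leaf of $p$ and the unstable leaf through $q$ make a perfect fit because the strip boundary is not attained, and symmetrically for the other side; then invoke the standard fact (from \cite{Fen:AFM}) that two perfect fits bounding a region on both sides produce a lozenge. Once both lozenges are constructed, distinctness is essentially automatic from the sign bookkeeping in the Remark above, and the proposition follows.
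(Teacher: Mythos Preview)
Your strategy is essentially the one the paper sketches: it introduces maps $\eta^s\colon \leafs \to \leafu$ and $\eta^u\colon \leafu \to \leafs$, where $\eta^s(\lambda^s)$ is the upper bound of the (open, bounded) interval of unstable leaves meeting $\lambda^s$, and similarly for $\eta^u$ (with the lower bounds giving the inverses). For any orbit $o$, the leaves $\hfs(o)$, $\hfu(o)$, $\eta^s(\hfs(o))$, $\eta^u(\hfu(o))$ then bound a lozenge with corners $o$ and $\eta(o) := \eta^u(\hfu(o)) \cap \eta^s(\hfs(o))$; the second lozenge arises from the lower bounds and has second corner $\eta^{-1}(o)$. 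This is exactly your ``bounded interval of leaves'' argument, phrased abstractly rather than in explicit strip coordinates.

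There is, however, a concrete error in your coordinates. With $p = (a,a)$ in the strip $|x-y|<1$, the positive stable half-leaf $A^+$ is met by the unstable leaves $x \in (a, a+1)$, and the positive unstable half-leaf $B^+$ is met by the stable leaves $y \in (a, a+1)$; the limiting leaves $x = a+1$ and $y = a+1$ meet at $q = (a+1, a+1)$, not $(a+1, a-1)$. Your proposed corner $(a+1, a-1)$ satisfies $|x-y| = 2$ and lies outside the strip altogether --- and more conceptually, the anti-diagonal pairing you describe cannot bound a lozenge, since a half-leaf $A^+$ paired with the \emph{opposite}-signed unstable half-leaf $B^-$ would force the putative opposite corner outside $\orb$. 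The two genuine lozenges at $p$ have second corners $(a+1,a+1)$ (sides $A^+, B^+$) and $(a-1,a-1)$ (sides $A^-, B^-$), consistent with the $(+,+,-,-)$ type discussed in the paper. They are distinct because they lie on opposite sides of both $\hfs(p)$ and $\hfu(p)$. Once the coordinates are corrected, your verification of the lozenge axioms and the distinctness argument go through as written.
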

Let us give an idea of how such lozenges are constructed. Let $\lambda^s \in \leafs$. Then the set $I^u(\lambda^s):= \lbrace \lambda^u \in \leafu \mid \lambda^u \cap \lambda^s \neq \emptyset\rbrace$ is an open, non-empty, connected and bounded set in $\leafu \simeq \R$. Hence it admits an upper and lower bound. Let $\eta^s(\lambda^s) \in \leafu$ be the upper bound and $\eta^{-u}(\lambda^s)\in \leafu$ be the lower bound. Similarly, for any $\lambda^u \in \leafu$, define $\eta^u(\lambda^u)$ and $\eta^{-s}(\lambda^u)$ as, respectively, the upper and lower bounds in $\leafs$ of the set of stable leafs that intersects $\lambda^u$. We have the following result:
\begin{prop}[Fenley \cite{Fen:AFM}, Barbot \cite{Bar:CFA,Bar:PAG}] \label{prop:eta_s_eta_u}
 Let $\flot$ be a skewed $\R$-covered Anosov flow in a $3$-manifold $M$,
where $\fs$ is transversely orientable. Then, the functions $\eta^s \colon \leafs \rightarrow \leafu$ and $\eta^u \colon \leafu \rightarrow \leafs$ are H\"older-homeomorphisms and $\pi_1(M)$-equivariant. We have $(\eta^u)^{-1} = \eta^{-u}$, and $(\eta^s)^{-1} = \eta^{-s}$.
 Furthermore, $\eta^u \circ \eta^s$ and $\eta^s \circ \eta^u$ are strictly increasing homeomorphisms and we can define $\eta \colon \orb  \rightarrow \orb$ by
\begin{equation*}
\eta(o):= \eta^u \left( \hfu(o)\right) \cap \eta^s\left(\hfs(o) \right) .
\end{equation*}
\end{prop}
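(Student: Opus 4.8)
The plan is to deduce the purely topological assertions from Barbot's normal form by direct computation, and to treat the H\"older regularity and the $\pi_{1}(M)$-equivariance separately. Since $\flot$ is skewed and $\R$-covered, Barbot's structure theorem quoted above provides a homeomorphism of $\orb$ onto the strip $\{(x,y)\in\R^{2} : |x-y|<1\}$ carrying unstable leaves to the vertical lines $\{x=\text{const}\}$ and stable leaves to the horizontal lines $\{y=\text{const}\}$; with the transverse orientations normalized as above so that lozenges are of type $(+,+,-,-)$, we may take this homeomorphism to identify $\leafu$ with the oriented $x$-axis and $\leafs$ with the oriented $y$-axis. In this model the stable leaf at height $y_{0}$ is the segment $\{(x,y_{0}) : y_{0}-1<x<y_{0}+1\}$, so $I^{u}$ of that leaf is $(y_{0}-1,y_{0}+1)$; hence $\eta^{s}(y_{0})=y_{0}+1$ and $\eta^{-u}(y_{0})=y_{0}-1$, and by the symmetric computation $\eta^{u}(x_{0})=x_{0}+1$ and $\eta^{-s}(x_{0})=x_{0}-1$.

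From these formulas every topological conclusion is immediate. Each of $\eta^{s},\eta^{u},\eta^{-s},\eta^{-u}$ is an orientation preserving homeomorphism of $\R$; one has $(\eta^{u})^{-1}=\eta^{-u}$ and $(\eta^{s})^{-1}=\eta^{-s}$ since $\eta^{-u}\circ\eta^{u}=\id=\eta^{u}\circ\eta^{-u}$ and likewise with $s$; the compositions $\eta^{u}\circ\eta^{s}$ and $\eta^{s}\circ\eta^{u}$ are translation by $2$, in particular strictly increasing homeomorphisms; and for a point $o=(x_{0},y_{0})$ of $\orb$ (so $|x_{0}-y_{0}|<1$) the stable leaf $\eta^{u}(\hfu(o))=\{y=x_{0}+1\}$ and the unstable leaf $\eta^{s}(\hfs(o))=\{x=y_{0}+1\}$ meet in the single point $(y_{0}+1,x_{0}+1)$, which lies in the strip because $|y_{0}-x_{0}|<1$, so $\eta$ is well defined and equal to $(x,y)\mapsto(y+1,x+1)$, with $\eta^{2}$ a translation --- matching the picture in which $\eta(o)$ is the opposite corner of one of the two lozenges through $o$. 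If one wishes to avoid the normal form, the same conclusions follow intrinsically from the observation that the bounded open intervals $I^{u}(\lambda^{s})\subset\leafu$ form a chain admitting no inclusions, which forces $\eta^{s}$ and $\eta^{-u}$ to be strictly monotone; monotonicity together with surjectivity (read off from the symmetric role of $\eta^{u}$) yields continuity; and $\eta^{u}\circ\eta^{-u}=\id$ follows by tracking, for a fixed $\lambda^{s}$, the interval of stable leaves whose $I^{u}$ contains $\eta^{-u}(\lambda^{s})$ and checking that its supremum is $\lambda^{s}$.

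The $\pi_{1}(M)$-equivariance is formal: the subsets $I^{u}(\lambda^{s})$, $I^{s}(\lambda^{u})$ and the chosen orientations are defined canonically from $\hfs$, $\hfu$ and $\hflot$, all of which deck transformations preserve, so deck transformations commute with taking suprema and infima and hence with $\eta^{s},\eta^{u},\eta^{-s},\eta^{-u}$ and $\eta$. The step I expect to be the real obstacle is the H\"older regularity, because Barbot's normal-form homeomorphism is merely topological and controls no modulus of continuity. For this I would use that the strong stable and strong unstable foliations of an Anosov flow are H\"older and that the weak foliations have H\"older holonomy: parametrizing $\leafs$ and $\leafu$ locally by arclength along fixed $C^{1}$ strong unstable, respectively strong stable, arcs transverse to $\hfs$, respectively $\hfu$, the maps $\eta^{s}$ and $\eta^{u}$ become, on compact pieces, compositions of boundedly many such holonomy maps and hence H\"older there; since everything is $\pi_{1}(M)$-equivariant and $M$ is compact, this local estimate promotes to a global H\"older bound, as in \cite{Bar:these,Bar:CFA}.
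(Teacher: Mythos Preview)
The paper does not prove this proposition; it is quoted as background from \cite{Fen:AFM} and \cite{Bar:CFA,Bar:PAG}, so there is no in-paper proof to compare against. Your argument is therefore not competing with anything the authors wrote, and I can only assess it on its own merits.

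The topological portion is correct and is exactly the kind of computation the normal form is designed for. Your identifications $\eta^{s}(y_{0})=y_{0}+1$, $\eta^{u}(x_{0})=x_{0}+1$, etc., are right, the inverse relations and the description of $\eta$ follow immediately, and your remark that one can also argue intrinsically from the nesting properties of the intervals $I^{u}(\lambda^{s})$ is accurate. The equivariance argument is likewise fine: transverse orientability of $\fs$ (and hence of $\fu$ in the skewed case) forces every deck transformation to preserve the orientations of $\leafs$ and $\leafu$, so suprema and infima are carried to suprema and infima.

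The one place where your write-up is a sketch rather than a proof is the H\"older regularity, and you flag this honestly. Your proposed mechanism---parametrize the leaf spaces by arclength along smooth strong-(un)stable transversals and express $\eta^{s}$, $\eta^{u}$ locally as compositions of weak-foliation holonomies, which are H\"older for Anosov flows---is the right idea and is essentially what is carried out in Barbot's work. To make it a proof one has to be a bit careful that the number of holonomy maps needed is uniformly bounded on a fundamental domain and that the H\"older exponents do not degenerate; compactness of $M$ and $\pi_{1}(M)$-equivariance give exactly this, as you say. So the sketch is sound, but if this were being submitted as a self-contained proof you would want to spell out that step.
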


So, for any orbit $o \in \orb$, the leaves $\hfs(o)$, $\hfu(o)$, $\eta^s(\hfs(o))$ and $\eta^u(\hfu(o))$ contain the side of a unique lozenge of corners $o$ and $\eta(o)$ (see Figure \ref{fig:lozenge_def_by_orbit}).

\begin{figure}[h]
\begin{center}
\begin{pspicture}(-0.5,-0.5)(6,6)
\psline[linewidth=0.04cm,linestyle=dashed](2,0.5)(5,3.5)
\psline[linewidth=0.04cm,linestyle=dashed](0.5,2)(3.5,5)
\psline[linewidth=0.04cm,arrowsize=0.05cm 2.0,arrowlength=1.4,arrowinset=0.4]{->}(1,0)(5,0)
\psline[linewidth=0.04cm,arrowsize=0.05cm 2.0,arrowlength=1.4,arrowinset=0.4]{->}(0,1)(0,5)
\rput(5.2,-0.2){$\leafs$}
\rput(-0.2,5.2){$\leafu$}
 \psline[linewidth=0.04cm,linecolor=green](0.5,2)(3.5,2)
\rput(4.1,1.8){$\hfu(o)$}
 \psline[linewidth=0.04cm,linecolor=red](2,0.5)(2,3.5)
\rput(2,0.2){$\hfs(o)$}
\put(1.6,1.6){$o$}
\psline[linewidth=0.04cm,linecolor=red,linestyle=dashed](3.5,2)(3.5,5)
 \rput(3.5,5.3){$\eta^u\left(\hfu(o)\right)$}
\psline[linewidth=0.04cm,linecolor=green,linestyle=dashed](2,3.5)(5,3.5)
\rput(1.1,3.9){$\eta^s\left(\hfs(o)\right)$}
\put(3.6,3.6){$\eta\left(o\right)$ }
\put(2.6,2.6){$L$}
\end{pspicture} 
\end{center}
 \caption{A lozenge $L$ defined by the orbit $o$} \label{fig:lozenge_def_by_orbit}
\end{figure}
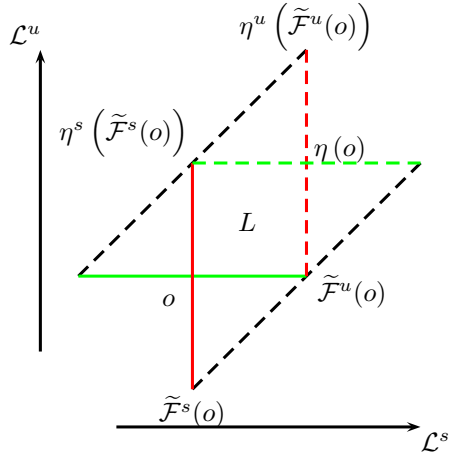

Now, thanks to Proposition \ref{prop:eta_s_eta_u}, we can describe explicitly the free homotopy class of a periodic orbit of an $\R$-covered Anosov flow:
\begin{thm}[Fenley \cite{Fen:AFM}] \label{thm:infinite_homotopy_class}
 Let $\flot$ be a skewed $\R$-covered Anosov flow on a homotopically atoroidal
$3$-manifold $M$,
so that $\fs$ is transversely orientable.
Let
$\alpha$ be a periodic orbit of $\flot$ and $\wt{\alpha}$ a lift of $\alpha$ to the universal cover $\M$.
The family of orbits $\lbrace \eta^{i}\left( \wt{\alpha}\right) \rbrace_{i\in \Z}$ projects down to 
a family of distinct periodic orbits. Furthermore, for any $i\in \Z$, the projection to $M$ of the
orbit $\eta^{2i}\left( \wt{\alpha} \right)$ is a periodic orbit freely homotopic to $\alpha$, and the
projection to $M$ of the orbit $\eta^{2i+1}\left( \wt{\alpha}\right)$ is a periodic orbit freely
homotopic to $\alpha$ traversed in the backwards flow direction.

In particular, every periodic orbit of $\flot$ has infinitely many other orbits in its free homotopy class.
\end{thm}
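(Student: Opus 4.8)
The plan is to run the whole argument through the generator $\gamma$ of $\mathrm{Stab}(\wt\alpha)\subset\pi_1(M)$ that acts on $\wt\alpha$ in the flow direction; it represents $\alpha$. First I would extract the structure. Since $\eta^s$ and $\eta^u$ are $\pi_1(M)$-equivariant (Proposition~\ref{prop:eta_s_eta_u}), so is $\eta$, hence $\gamma\cdot\eta^i(\wt\alpha)=\eta^i(\gamma\cdot\wt\alpha)=\eta^i(\wt\alpha)$ for every $i\in\Z$. A point of $\orb$ fixed by a nontrivial deck transformation projects to a periodic orbit, so $\al i:=\eta^i(\wt\alpha)$ is a lift of a periodic orbit $\alpha_i:=\pi(\al i)$, and $\gamma$ lies in the infinite cyclic group $\mathrm{Stab}(\al i)$. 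Writing $L_i$ for the lozenge with corners $\al i$ and $\al{i+1}=\eta(\al i)$ furnished by the construction following Proposition~\ref{prop:eta_s_eta_u}, the generator $\delta_i$ of $\mathrm{Stab}(\al i)$ stabilizes that corner of $L_i$, hence (by Proposition~\ref{prop:stabilized_lozenge}, whose side hypothesis is automatic in our setting) stabilizes $L_i$ and $\al{i+1}$ as well; iterating in both directions, $\delta_i$ fixes every $\al k$, in particular $\wt\alpha$, so $\delta_i\in\langle\gamma\rangle$ and $\gamma=\delta_i^{\pm1}$. Thus $\mathrm{Stab}(\al i)=\langle\gamma\rangle$ and $\gamma$ represents $\alpha_i$ traversed exactly once, in one of its two directions. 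I would also record here that $\mathrm{Fix}(\gamma)=\{\al k\}_{k\in\Z}$ --- the structural fact I would either cite from \cite{Fen:AFM} or deduce from Proposition~\ref{prop:stabilized_lozenge} together with the $\R$-covered structure --- and that the $\al k$ are pairwise distinct.

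The heart of the proof is to show that the traversal direction alternates along the chain: $\gamma$ traverses $\alpha_i$ forward iff $i$ is even. Granting this, since $\gamma$ traverses $\alpha_0=\alpha$ forward by construction, for even $i$ the class $[\alpha_i]$ is conjugate to $\gamma=[\alpha]$ and for odd $i$ it is conjugate to $\gamma^{-1}=[\alpha]^{-1}$, i.e.\ $\alpha_{2i}$ is freely homotopic to $\alpha$ and $\alpha_{2i+1}$ to $\alpha$ traversed backwards, as asserted. To prove the alternation I would work locally at the corner $q:=\al{i+1}$ of $L_i$, using the normalization that $L_i$ is of type $(+,+,-,-)$ (Figure~\ref{fig:lozenge++--}): with $\al i$ the $(+)$-corner and $q$ the $(-)$-corner, $\gamma$ fixes $L_i$ and all of its sides (Proposition~\ref{prop:stabilized_lozenge}), so it fixes setwise the half-leaves $\hfu(\al i)^+$ and $\hfu(q)^-$. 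The stable leaves meeting $\hfu(\al i)^+$ form an open interval $I\subset\leafs$ with endpoints $[\hfs(\al i)]$ and $[\hfs(q)]$, and by the defining property of a lozenge the stable leaves meeting $\hfu(q)^-$ form the same interval; under the natural parametrizations of these two half-leaves by $I$, the point $\al i$ corresponds to the endpoint $[\hfs(\al i)]$ and the point $q$ corresponds to the endpoint $[\hfs(q)]$. Now if $\gamma$ traverses $\alpha_i$ forward it expands $\hfu(\al i)$ away from $\al i$, hence moves $I$ off $[\hfs(\al i)]$; since $\mathrm{Fix}(\gamma)=\{\al k\}$ puts no fixed point of $\gamma$ in the interior of $I$, and the Poincar\'e return map of $\alpha_{i+1}$ is hyperbolic, $\gamma$ must move all of $I$ toward $[\hfs(q)]$, hence contract $\hfu(q)$ toward $q$; this says $\gamma$ traverses $\alpha_{i+1}$ backwards. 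Running the same computation starting from ``$\gamma$ backwards at $\al i$'' gives the reverse implication, so the direction indeed flips at each step of the chain.

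Finally I would prove the $\alpha_i$ are pairwise distinct; infinitude of the free homotopy class of $\alpha$ then follows since it contains all the $\alpha_{2i}$. Suppose $g\cdot\al i=\al j$ for some $g\in\pi_1(M)$ and $i\neq j$. Then $g\gamma g^{-1}$ fixes $\al j=g\al i$, and since $\gamma$ and $g\gamma g^{-1}$ both generate $\mathrm{Stab}(\al j)$ we get $g\gamma g^{-1}=\gamma^{\pm1}$. If $g\gamma g^{-1}=\gamma$, then $\langle\gamma,g\rangle$ is abelian and torsion free ($\pi_1(M)$ is torsion free, e.g.\ because $\M\cong\R^3$), hence $\cong\Z$ or $\Z^2$; atoroidality excludes $\Z^2$, so $g$ is a power of $\gamma$ and therefore fixes every $\al k$, contradicting $\al i\neq\al j$. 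If $g\gamma g^{-1}=\gamma^{-1}$, then $g^2$ centralizes $\gamma$, so by the same dichotomy $\langle\gamma,g^2\rangle\cong\Z$; writing $g^2=\gamma^m$ we get $\gamma^m=g^2=gg^2g^{-1}=g\gamma^mg^{-1}=\gamma^{-m}$, whence $\gamma^{2m}=1$, so $m=0$, then $g^2=1$ and $g=1$ by torsion freeness, again contradicting $i\neq j$.

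The step I expect to be the genuine obstacle is the alternation of the traversal direction: it is the one place where I need the fine geometry of a $(+,+,-,-)$ lozenge --- the precise endpoints of the intervals of leaves crossing its sides --- combined with the fact that $\mathrm{Fix}(\gamma)$ is \emph{exactly} the lozenge chain, so that there is no stray $\gamma$-invariant leaf in which the monotonicity could secretly reverse; making that last fact cleanly available (from Proposition~\ref{prop:stabilized_lozenge} plus the $\R$-covered structure, or by citation) is the key preliminary. Everything else --- equivariance of $\eta$, cyclicity of orbit stabilizers, and the torsion-free/atoroidal group theory --- is routine.
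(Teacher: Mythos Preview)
The paper does not prove this theorem; it is quoted as background from \cite{Fen:AFM}, so there is no in-paper argument to compare against. Your proposal is essentially a correct reconstruction of the original approach: periodicity of the $\eta^i(\wt\alpha)$ via the $\pi_1$-equivariance of $\eta$, alternation of the flow direction from the dynamics of $\gamma$ on the sides of each lozenge (this is precisely what Figure~\ref{fig:gamma_stabilizing_chain} is meant to encode), and distinctness of the $\alpha_i$ via an atoroidality/torsion-free argument.

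Two comments. First, in the distinctness argument the step ``$\langle\gamma,g\rangle\cong\Z$, so $g$ is a power of $\gamma$'' is compressed: a priori $\gamma$ and $g$ are only powers of a common generator $h$. One extra line closes this: $h$ commutes with $\gamma$, hence permutes $\mathrm{Fix}(\gamma)=\{\al k\}$, and by $\eta$-equivariance this permutation is a shift $k\mapsto k+m$; since $h^a=\gamma$ fixes every $\al k$ we get $am=0$, so $m=0$, whence $h\in\mathrm{Stab}(\al 0)=\langle\gamma\rangle$ and thus $g\in\langle\gamma\rangle$. The same remark applies to ``$g^2=\gamma^m$'' in the second case.

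Second, you are right to flag the alternation step as the crux, and right that it genuinely rests on $\mathrm{Fix}(\gamma)=\{\al k\}$ (equivalently: no $\gamma$-fixed leaf of $\hfs$ strictly between $\hfs(\al i)$ and $\hfs(\al{i+1})$). Without that, $\gamma|_I$ could reverse direction at an interior fixed point and your conclusion at $\al{i+1}$ would not follow. This fact is established in \cite{Fen:AFM} and does not fall out of Proposition~\ref{prop:stabilized_lozenge} and the $\R$-covered structure alone, so citing it is the clean choice; your parenthetical ``or deduce'' should be dropped unless you are prepared to supply that deduction.
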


\begin{defin}[double free homotopy class] \label{defin:double_free_homotopy}
If $\alpha$ is a periodic orbit of $\flot$, and $\wt{\alpha}$ a 
lift of $\alpha$ to $\M$, then we call the set of orbits 
$ \left\{ \pi\left( \eta^i (\wt{\alpha}) \right) \right\}_{i \in \Z}$ the 
double free homotopy class of $\alpha$.
\end{defin}

In the sequel, we will sometimes abuse notation and say that an orbit $\wt{\alpha}$ of $\hflot$ is periodic if it is a lift of a periodic orbit $\alpha$ of $\flot$. Equivalently, for an orbit $\wt{\alpha}$ of $\hflot$, being periodic means that the stabilizer of $\wt{\alpha}$ in $\pi_1(M)$ is non-trivial (and hence, infinite cyclic).

\subsection{Slitherings and the universal circle}

A {\em slithering} in $M$ is a fibration $s\colon \M \rightarrow 
{\bf S}^1$  for which $\pi_1(M)$ acts as a bundle automorphism \cite{Thurston:3MFC}.
In other words an element of $\pi_1(M)$ takes each fiber
of $s$ to a (possibly different) fiber of $s$.
This induces a foliation of $\M$, which is invariant
by $\pi_1(M)$ and hence a foliation ${\mathcal {F}}$ in $M$
which by definition is $\R$-covered. We say that this
foliation comes from a slithering or is a slithering.

Suppose that $\phi$ is a skewed, $\R$-covered Anosov flow
with $\fs$ transversely orientable.
Let $s^s \colon \M \rightarrow \leafs/ \eta^u\circ \eta^s \simeq S^1$ and $s^u \colon \M \rightarrow \leafu/ \eta^s\circ \eta^u \simeq S^1$. Note that the leaves of the foliations $\hfs$ and $\hfu$ are obtained, respectively, by taking the connected components of the fibers of $s^s$ and $s^u$. 
Then both $\fs$ and $\fu$ are slitherings.
In fact the two foliations are connected by the maps $\eta^u$ and $\eta^s$.

\vskip .1in
Thurston also introduced a {\em universal circle} for any
foliation in a $3$-manifold with hyperbolic leaves
or Gromov hyperbolic leaves. We describe the universal
circle for the foliation $\fs$ (and similarly $\fu$) as it
will be fundamental for our results.

Using Candel's result \cite{Candel:uniformization}
one may assume that the leaves of $\fs$ 
are hyperbolic. Hence, any leaf $F$ of $\hfs$ admits a circle at infinity $\partial_{\infty}F$.
Since $\fs$ is a slithering, Thurston proved that
any two leaves of $\hfs$ are a bounded distance from
each other, the bound depending on the pair of leaves
of $\hfs$
\cite{Thurston:3MFC}.
Using this and the $\R$-covered property of $\fs$ then,
for any pair $F, L$ of leaves of $\hfs$, one
produces a coarsely well defined quasi-isometry
$f \colon F \rightarrow L$. This $f$ induces a homeomorphism
between the circles at infinity of $F$ and $L$,
$f_{F,L} \colon \partial_{\infty} F \rightarrow \partial_{\infty} L$.
The map $f_{F,L}$ is characterized as follows: given $p$ an ideal
point of $F$, let $r$ be a geodesic ray in $F$ with ideal point
$p$. Then $r$ is a bounded distance from a curve $r'=f(r)$ in
$L$. The curve $r'$ is a quasigeodesic and limits in a single
point in $\partial_{\infty} L$, which is the image of $p$ by
the map $f_{F,L}$.
The map $f_{F,L}$ is canonical: it does not depend on the choice 
of map $f$.
In addition the collection of maps $f_{F,L}$ satisfies
a cocycle property, if $F, E, L$ are leaves of $\hfs$ then:

$$f_{F,L} \ \ = \ \ f_{E,L} \circ f_{F,E}$$

\noindent
The universal circle
$\univ$ of $\fs$ is the quotient of the union
of all circles at infinity of leaves of $\hfs$ by the
identifications induced  by these homeomorphisms.
The fundamental group acts naturally on the universal
circle.
Clearly there is a universal circle of the unstable
foliation as well. These are essentially the same by
a series of identifications. Therefore we will only
use the universal circle of the stable foliation
and denote it by $\univ$.

Another object which will be very useful is the 
{\em cylinder at infinity} denoted by ${\mathcal A}$.
It is the union of all circles at infinity
$\partial_{\infty} F$ of leaves of $\hfs$ topologized
so that ideal points associated to 
geodesic rays which stay near for a long
distance are near. The fundamental group naturally
acts on ${\mathcal A}$.
This cylinder obviously comes with a horizontal foliation
by circles at infinity. Using the homeomorphisms
$f_{F,L}$ described above then ${\mathcal A}$ also
has a natural vertical foliation: a vertical leaf
is the union of all points which are identified
by one of the homeomorphims $f_{F,L}$.
See details in articles by Calegari \cite{Calegari:geometry_of_R_covered} and Fenley
\cite{Fen:Foliations_TG3M}.

Thurston \cite{Thurston:3MFC} (see also Calegari \cite{Calegari:geometry_of_R_covered} and Fenley \cite{Fen:Foliations_TG3M}) proved the following:
\begin{thm}[Thurston, Fenley, Calegari]
 If $\mathcal{F}$ is a foliation coming from a slithering on an atoroidal, aspherical closed $3$-manifold $M$, then the associated universal circle $\univ$ admits two laminations $\Lambda^{\pm}_{\text{univ}}$ which are preserved under the natural action of $\pi_1(M)$ on $\univ$.
\end{thm}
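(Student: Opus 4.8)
The plan is to reduce the statement to a ``monodromy at infinity'' picture modelled on the case where $\mathcal F$ is the fibre foliation of a surface bundle over $S^1$: there $\Lambda^{\pm}_{\text{univ}}$ are just the pair of invariant laminations of the monodromy, which is pseudo-Anosov precisely because $M$ is atoroidal. First I would pin down the geometry. Since $M$ is aspherical, $\M$ is contractible and the leaves of $\widetilde{\mathcal F}$ (the lift of $\mathcal F$) are planes. By Candel's uniformization \cite{Candel:uniformization} --- applicable because $\mathcal F$, being $\R$-covered, is taut, and the only exceptional case, in which the leaves carry a Euclidean structure, is the suspension-type $\R$-covered foliation, forcing $M$ to fibre over $S^1$ with torus fibre and contradicting atoroidality --- we may assume every leaf $F$ of $\widetilde{\mathcal F}$ is isometric to $\Hyp^2$, with circle at infinity $\partial_{\infty}F$. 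These assemble, exactly as recalled above, into the cylinder at infinity $\mathcal A$ (with horizontal foliation $\{\partial_{\infty}F\}$ and vertical foliation the fibres of the canonical comparison homeomorphisms $f_{F,L}$) and into its quotient circle $\univ$. Everything here is $\pi_1(M)$-equivariant, since the slithering acts by bundle automorphisms.

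Next I would construct the monodromy and the laminations. Lift the slithering to $\tilde s\colon \M\to\R$, fix a base leaf $L_{0}$, and for $t\in\R$ let $L_{t}$ be the leaf at $\tilde s$-height $t$. Because $\mathcal F$ is a slithering, any two leaves lie a bounded distance apart, so the coarse comparison quasi-isometries $L_{0}\to L_{t}$ induce homeomorphisms $\partial_{\infty}L_{0}\to\partial_{\infty}L_{t}$; composing with the identifications built into $\mathcal A$ gives a one-parameter family $h_{t}$ of homeomorphisms of $\univ$ with $h_{0}=\mathrm{id}$, obeying the flow law up to the canonical identifications, and commuting with the $\pi_1(M)$-action. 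I would then set $\Lambda^{+}_{\text{univ}}$ to be the Hausdorff limit, in the space of geodesic laminations of the disk bounded by $\univ$, of $h_{t}(\mathcal G)$ as $t\to+\infty$, and $\Lambda^{-}_{\text{univ}}$ the limit as $t\to-\infty$, where $\mathcal G$ is a fixed finite filling geodesic family in that disk (independence of $\mathcal G$, in the nondegenerate case, being part of the content). Transported geodesics never link at infinity --- two geodesics of a single hyperbolic leaf are unlinked and each $f_{F,L}$ is a homeomorphism --- so, once the limits exist, they are honest laminations of $\univ$.

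The three things left to check are: (i) existence and $h_{t}$-invariance of the Hausdorff limits, where the uniform-close property of the slithering does the work, the nesting of the intervals of $\leafs$ governing the $h_{t}$ forcing the transported lamination patterns to converge monotonically; (ii) $\pi_1(M)$-invariance of $\Lambda^{\pm}_{\text{univ}}$, which is immediate from the equivariance above; and (iii) that $\Lambda^{\pm}_{\text{univ}}$ are nonempty --- and hence automatically essential. Step (iii) is the crux and is where atoroidality re-enters: by an argument of Thurston \cite{Thurston:3MFC} one has the dichotomy that either the transported circles at infinity stabilise in a ``rotation-like'' pattern, which produces a transverse symmetry of $\mathcal F$ and thence an essential torus, i.e.\ a $\Z^2$ in $\pi_1(M)$, contradicting atoroidality, or the limits $\Lambda^{\pm}_{\text{univ}}$ are nontrivial --- indeed very full --- laminations. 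I expect (iii) to be the main obstacle: ruling out the rotation-like alternative is exactly the analogue of showing the monodromy of an atoroidal surface bundle over $S^1$ is pseudo-Anosov rather than periodic, and it requires combining Candel's metric, the quasigeodesic control on the comparison maps, and the cocycle property $f_{F,L}=f_{E,L}\circ f_{F,E}$. The rest --- well-definedness, continuity in $t$, and equivariance of $h_{t}$ --- is routine given the slithering structure already recalled.
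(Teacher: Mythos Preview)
First, a framing remark: the paper does not prove this theorem. It is stated in the background section as a result of Thurston, Calegari, and Fenley, with citations to \cite{Thurston:3MFC}, \cite{Calegari:geometry_of_R_covered}, \cite{Fen:Foliations_TG3M} but no argument. So there is no ``paper's own proof'' to compare against; I can only assess your sketch on its own terms.

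Your proposal has a genuine gap at its center. You build $h_t\colon\univ\to\univ$ by taking the coarse comparison quasi-isometry $L_0\to L_t$, passing to the induced boundary homeomorphism $\partial_{\infty} L_0\to\partial_{\infty} L_t$, and then ``composing with the identifications built into $\mathcal A$''. But those identifications are precisely the maps $f_{L_0,L_t}$, and the boundary map induced by the comparison quasi-isometry \emph{is} $f_{L_0,L_t}$ by definition --- this is exactly how the paper characterizes $f_{F,L}$. Hence every $h_t$ is the identity on $\univ$, your Hausdorff limits return $\mathcal G$ unchanged, and the construction is vacuous. The surface-bundle analogy misleads you here: in a genuine fibration the monodromy is a diffeomorphism of a \emph{single} fibre back to itself, produced by the deck transformation of the infinite cyclic cover, and it is nontrivial on $\partial_{\infty}$ precisely because it is \emph{not} the bounded-distance comparison map. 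For a general slithering no such self-map of a leaf is available a priori --- the inter-leaf comparisons are already quotiented out in forming $\univ$, and there is nothing left over to play the role of a monodromy.

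The constructions in the cited references do not proceed via a monodromy at infinity. They extract $\Lambda^{\pm}_{\text{univ}}$ from the $\pi_1(M)$-action on $\univ$ and on the cylinder $\mathcal A$ directly --- for instance via the two special (non-vertical) sections of $\mathcal A$ that record contracting and expanding directions between leaves --- and atoroidality enters to rule out a degenerate action (a global fixed point or invariant pair on $\univ$), which would force the laminations to be empty. Your instinct in step (iii) about where atoroidality matters is correct, but the dynamical object you want it to govern has not actually been constructed.
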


From these laminations, Thurston managed to construct a $\pi_1(M)$-invariant flow on $\M$, which projects to a pseudo-Anosov flow on $M$:
\begin{thm}[Thurston \cite{Thurston:3MFC}]
 Let $\mathcal{F}$ be a slithering on an atoroidal, aspherical closed $3$-manifold $M$. 
Suppose that $\mathcal{F}$ is transversely orientable.
Then there exists a pseudo-Anosov flow $\psi^t \colon M \rightarrow M$ such that its lift $\wt{\psi}^t$ to the universal cover $\M$ is transverse to the lifted foliation $\wt{\mathcal{F}}$ and, for any $x\in \M$, the orbit $\wt{\psi}^t (x)$ intersects every leaf of $\wt{\mathcal{F}}$.
The flow $\psi^t$ is called a regulating flow for $\mathcal{F}$.
\end{thm}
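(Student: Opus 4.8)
The plan is to carry out Thurston's construction of the regulating pseudo-Anosov flow from the universal circle, in the form worked out by Calegari \cite{Calegari:geometry_of_R_covered} and Fenley \cite{Fen:Foliations_TG3M}. First I would invoke the preceding theorem to obtain the two $\pi_1(M)$-invariant laminations $\Lambda^{\pm}_{\text{univ}}$ of $\univ$. By Candel's uniformization theorem \cite{Candel:uniformization} each leaf $F$ of $\wt{\mathcal F}$ is hyperbolic, and since all the identifications $f_{F,L}$ are homeomorphisms, the circle at infinity $\partial_\infty F$ is canonically identified with $\univ$; transporting $\Lambda^{\pm}_{\text{univ}}$ to $\partial_\infty F$ and realizing the resulting laminations by geodesics of $F$ yields a pair of geodesic laminations $\lambda^{\pm}_F$ on each leaf. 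The cocycle property of the $f_{F,L}$ forces the families $\{\lambda^{\pm}_F\}$ to vary continuously and $\pi_1(M)$-equivariantly, so their union over all leaves projects to a pair of two-dimensional laminations $\Lambda^{\pm}$ of $M$, each meeting every leaf of $\mathcal F$ in a geodesic lamination; in particular the leaves of $\Lambda^{\pm}$ are transverse to $\mathcal F$.

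Next I would arrange, by a leafwise isotopy, that $\Lambda^{+}$ and $\Lambda^{-}$ are transverse to one another and that they \emph{fill}, i.e.\ that in each leaf $F$ of $\wt{\mathcal F}$ every complementary region of $\lambda^{+}_F \cup \lambda^{-}_F$ is an ideal polygon; these regions then assemble, over the leaf space $\leafs \simeq \R$, into finitely many ``prong'' regions. The intersection $\wt{\Lambda}^{+} \cap \wt{\Lambda}^{-}$ in $\M$ is a one-dimensional lamination transverse to $\wt{\mathcal F}$, and collapsing the complementary product and prong regions in the usual way produces a $\pi_1(M)$-invariant flow $\wt{\psi}^t$ on $\M$, hence a flow $\psi^t$ on $M$ whose weak stable and unstable foliations are the images of $\Lambda^{\pm}$. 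The exponential contraction and expansion transverse to $\psi^t$ come from the source--sink dynamics on $\univ$ of the elements of $\pi_1(M)$ at the endpoints of the leaves, and at the gaps, of $\Lambda^{\pm}_{\text{univ}}$; so $\psi^t$ is pseudo-Anosov, its singular orbits being precisely the ones collapsed from the ideal-polygon regions.

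For the transversality and regulating statements: the collapse in the previous step respects $\wt{\mathcal F}$, since every object involved is transverse to it, so $\wt{\psi}^t$ is transverse to $\wt{\mathcal F}$. Moreover $\mathcal F$ comes from a slithering, so any two leaves of $\wt{\mathcal F}$ lie a bounded distance apart; hence, as in Thurston's argument (which in fact produces the flowlines directly as transversals crossing every leaf), a complete orbit of $\wt{\psi}^t$ transverse to $\wt{\mathcal F}$ meets every leaf of $\wt{\mathcal F}$. This is exactly the statement that $\psi^t$ regulates $\mathcal F$.

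I expect the real difficulty to lie in the second step: proving that $\Lambda^{+}$ and $\Lambda^{-}$ genuinely fill --- equivalently, that $\Lambda^{\pm}_{\text{univ}}$ leaves no complementary gap of $\univ$ large enough to obstruct the collapse --- and that the collapsing can be performed consistently across all the leaves of $\wt{\mathcal F}$ at once, so that the output is an honest pseudo-Anosov flow rather than a merely topological flow, or one carrying a torus of closed orbits. This is precisely where atoroidality (equivalently, hyperbolicity) of $M$ is used.
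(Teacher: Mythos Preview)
The paper does not prove this theorem at all: it is quoted in the background section as a result of Thurston \cite{Thurston:3MFC}, with no argument given beyond the surrounding discussion of slitherings, the universal circle, and the laminations $\Lambda^{\pm}_{\text{univ}}$. So there is no ``paper's own proof'' to compare your proposal against.

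That said, your outline is a fair summary of the construction as it appears in the references the paper cites (Thurston \cite{Thurston:3MFC}, Calegari \cite{Calegari:geometry_of_R_covered}, Fenley \cite{Fen:Foliations_TG3M}): pull the invariant laminations on $\univ$ back to geodesic laminations on each leaf, assemble these into transverse two-dimensional laminations $\Lambda^{\pm}$ in $M$, and collapse complementary regions to obtain the pseudo-Anosov flow. You are also right that the substantive work lies in showing $\Lambda^{\pm}_{\text{univ}}$ are minimal and fill, and that the collapse yields genuine pseudo-Anosov dynamics rather than something degenerate; this is indeed where atoroidality enters. Your sketch is at the level of a plausible roadmap rather than a proof, but since the paper itself only cites the result, that is all one could reasonably expect here.
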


This regulating pseudo-Anosov flow will be used in section \ref{sec:action_of_pi1} to study the
co-cylindrical class of periodic orbits of the Anosov flow $\flot$.

Pseudo-Anosov flows are a generalization of suspension of pseudo-Anosov diffeomorphism. Here is how we can define them (see Mosher \cite{Mosher:DS,Mosher:DS_II} for foundational work on pseudo-Anosov flows):
\begin{defin} \label{def:pseudo-Anosov}
 A flow $\psi^t$ on a closed $3$-manifold $M$ is said to be \emph{pseudo-Anosov} if it satisfies the following conditions:
\begin{itemize}
 \item For each $x\in M$, the flow line $t\mapsto \psi^t(x)$ is $C^1$, not a single point, and the tangent vector field is $C^0$;
 \item There are two (possibly) singular transverse foliations $\Lambda^s$ and $\Lambda^u$ which are two-dimensional, with leaves saturated by the flow and such that they intersect exactly along the flow lines of $\psi^t$;
 \item There is a finite number (possibly zero) of periodic orbits, called singular orbits. A leaf containing a singularity is homeomorphic to $P\times [0,1] / f$ where $P$ is a $p$-prong in the plane and $f$ is a homeomorphism from $P\times\{1\}$ to $P\times\{0\}$. In addition $p$ is at least $3$;
 \item In a stable leaf, all orbits are forward asymptotic; in an unstable leaf, they are all backward asymptotic.
\end{itemize}
\end{defin}
 
\begin{rem}
 Note that if $\flot \colon S\Sigma \rightarrow S\Sigma$ is the geodesic flow of a negatively curved surface, 
despite the fact that $S\Sigma$ is toroidal, there also exists a regulating flow for the stable and unstable foliations: just take the flow that moves the unit vectors along their fibers. However, that flow is very far from being pseudo-Anosov.
\end{rem}

In the case of skewed, $\R$-covered Anosov flows, the universal
circles of $\fs$ and $\fu$ are naturally identified with the
topological circles $C_s = \leafs/ \eta^u\circ \eta^s$ and
$C_u = \leafu/ \eta^s\circ \eta^u$ respectively.
Here is the explanation for $\fs$:
Given a point $x$ in $C_s$ lift it to a leaf $F$ of
$\hfs$ $-$ there is  a $\Z$ worth of such leaves.
There is a unique ideal point of $F$ which is the forward
ideal point of all flow lines of $\widetilde \phi_t$ in $F$.
This projects to a point in the universal circle
of $\fs$ and let this be the image of $x$ denoted by
$c(x)$. The structure of skewed, $\R$-covered
Anosov flows implies that the map $c$ is well defined
and is a homeomorphism, see details in \cite{Thurston:3MFC}
and \cite{Fen:AFM}.
Because of this we can think of the universal circle
of $\fs$ as the 
topological circle $C_s = \leafs/ \eta^u\circ \eta^s$, 
and likewise for $\fu$.
When we do not need to specify, we will write $\univ$ for the universal circle of a
slithering. Note that under the identifications above the
map $\eta^s$ induces an homeomorphism between the universal circles for $\fs$ and $\fu$.

Before getting to the core of this article here is a last remark. Even if skewed $\R$-covered Anosov flows exists in three-manifolds with vastly 
different topology, once lifted to the universal cover their picture
is very similar, as described in Figure \ref{fig:skewed_R-covered_AF}.

\begin{SCfigure}[50][h]
\centering
\scalebox{1.5}{ 
\begin{pspicture}(-2,-1)(2,7)
\psset{viewpoint=30 30 30,Decran=60}

\defFunction[algebraic]{helice}(t){cos((2*Pi/3)*t)}{sin((2*Pi/3)*t)}{t}
\psSolid[object=courbe,r=0.001,range=0 6,linecolor=blue,linewidth=0.02,resolution=360,function=helice]

\defFunction[algebraic]{geodesic}(t){(1-t)*(-0.70710) + t*cos((2*Pi/3)*3)}{(1-t)*(-0.70710) + t*sin((2*Pi/3)*3)}{3}

\defFunction[algebraic]{unstable_leaf}(s,t){(1-t)*(-sqrt(2)/2) + t*cos((2*Pi/3)*s)}{(1-t)*(-sqrt(2)/2) + t*sin((2*Pi/3)*s)}{s}
\psSolid[object=surfaceparametree,
    opacity=0.7,
    linecolor={[cmyk]{1,0,1,0.5}},
    base= 1.875 4.875 0 1,
    action = draw,
    function=unstable_leaf,
    linewidth=0.5\pslinewidth,ngrid=30 1]
\defFunction[algebraic]{stable_leaf}(s,t){(1-t)*cos((2*Pi/3)*(1+s)) + t*cos((2*Pi/3)*1)}{(1-t)*sin((2*Pi/3)*(1+s)) + t*sin((2*Pi/3)*1)}{1}
\psSolid[object=surfaceparametree,
    linecolor=red,
    base= 1.875 4.875 0 1,
    action = draw,
    function=stable_leaf,
    linewidth=0.5\pslinewidth,ngrid=20 1]
\psSolid[object=cylindrecreux,h=6,r=1,action=draw,linewidth=0.01,ngrid=3 25,opacity = 0.1,fillcolor=red,incolor=white](0,0,0)
\end{pspicture}}
\caption{We can represent $\hflot$ in the following way: $\M$ is identified with a solid cylinder where each horizontal slice is a stable leaf. On a stable leaf, the orbits of $\hflot$ are lines all pointing towards the same point on the boundary at infinity of the leaf. We represented a stable leaf, with some orbits on it, in red. The blue curve represents the point at infinity where orbits ends. It is a way of seeing $\leafs$ ``slithers''. An unstable leaf now, represented in green, is given by fixing the $(x,y)$-coordinates (i.e., the points that project to the same point on the universal circle) and taking the lines pointing towards the blue curve. Finally, if $M$ is atoroidal and aspherical, the orbits of the regulating pseudo-Anosov flow $\wt{\psi}^t$ are vertical curves inside the cylinder and stabilize the foliation by vertical straight lines on the boundary.} \label{fig:skewed_R-covered_AF}
\end{SCfigure}
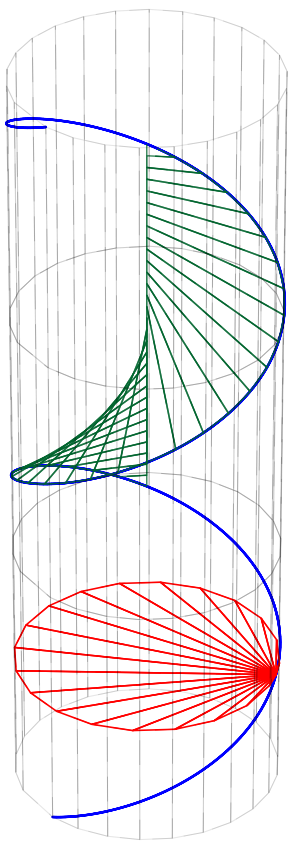

\section{Unknotting of orbits of Anosov flows in the universal cover}

Before studying isotopy classes of orbits, we warm up by showing that lifts of closed
orbits of Anosov flows on $3$-manifolds are always unknotted in the universal cover.
In fact we show unknotting behavior of any lift of an
essential curve in a leaf of a Reebless foliation in
a $3$-manifold:

\begin{prop}
Let $\mathcal F$ be a Reebless foliation in a $3$-manifold
$M$ and let $\gamma$ be an essential simple closed curve in a leaf of
$\mathcal F$. Let $\wt{\gamma}$ be any lift of $\gamma$
to $\wt{M}$. Then $\wt{\gamma}$ is unknotted
in $\wt{M}$.
Equivalently $\wt{M} - \wt{\gamma}$ 
is homeomorphic to a solid torus or
$\pi_1(\wt{M} - \wt{\gamma}) \cong \Z$.
\end{prop}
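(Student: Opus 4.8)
The plan is to exploit the fact that a Reebless foliation lifts to a foliation of $\wt M$ by planes (or, more precisely, that the leaf $F$ containing $\gamma$ lifts to a plane, since $\gamma$ is essential in $F$ and therefore $F$ is not a sphere and its universal cover is $\R^2$). Let $F$ be the leaf of $\mathcal F$ containing $\gamma$, and let $\wt F$ be the component of the preimage of $F$ containing the lift $\wt\gamma$. By Novikov and Rosenberg, a leaf of a Reebless foliation is $\pi_1$-injective, so $\wt F$ is simply connected; being a leaf of the pulled-back foliation of a $3$-manifold it is a surface, hence $\wt F \cong \R^2$. The curve $\gamma$ is essential in $F$, so any lift of $\gamma$ to the universal cover of $F$ is a properly embedded line; but a priori $\wt F$ is only the cover of $F$ corresponding to the stabilizer of $\wt\gamma$, which is the cyclic group $\langle [\gamma] \rangle$ if $\gamma$ generates its own centralizer, and in general $\wt\gamma$ could be a circle in $\wt F$. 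Here is the key point: since $\wt F$ is the universal cover of $F$ (as $F$ is aspherical and $\wt F$ is simply connected), and $\gamma$ is essential, $\wt\gamma$ is a properly embedded copy of $\R$ in $\wt F \cong \R^2$, dividing $\wt F$ into two half-planes.

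Next I would use the product structure transverse to the foliation. Since $\mathcal F$ is Reebless, by Palmeira's theorem (or directly, using that $\wt M$ is covered by foliated charts and the leaf space of $\widetilde{\mathcal F}$ is a simply connected $1$-manifold, possibly non-Hausdorff) there is a neighborhood of $\wt F$ in $\wt M$ of the form $\wt F \times (-\eps,\eps)$ with $\wt F = \wt F \times \{0\}$ and the second factor transverse to the foliation. Thicken $\wt\gamma$ inside $\wt F$ to a closed half-open strip $\wt\gamma \times [0,1) \subset \wt F$ (a regular neighborhood of the line $\wt\gamma$ in the plane $\wt F$, which is a strip since $\wt\gamma$ is a properly embedded line); then the product neighborhood gives an embedding of $\wt\gamma \times [0,1) \times (-\eps,\eps)$ into $\wt M$. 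In particular $\wt\gamma$ has a neighborhood in $\wt M$ homeomorphic to $\R \times [0,1) \times (-\eps,\eps) \cong \R \times \R^2$, i.e.\ an open solid torus $\cong \R \times \text{disk}$ — but this is just the tubular neighborhood, which is automatic; the real content is the complement.

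To get the statement about the complement I would compute $\pi_1(\wt M - \wt\gamma)$, or rather show it is $\Z$, which by the excerpt's phrasing is what "unknotted" means here. Consider the transverse line segment through a point of $\wt\gamma$: pushing $\wt\gamma$ off itself within $\wt F$ and also transversally, one sees a meridian disk. More robustly: let $N$ be the open tubular neighborhood $\wt\gamma \times D$ constructed above and let $X = \wt M - \wt\gamma$, which deformation retracts to $\wt M - N$ glued to $\partial N \setminus \wt\gamma \simeq \partial N$; apply van Kampen with $\wt M = (\wt M - \wt\gamma) \cup N$, $N \cap (\wt M - \wt\gamma) \simeq \R \times S^1$. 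Since $\wt M$ is simply connected, van Kampen gives $\pi_1(\wt M - \wt\gamma)$ is the quotient of itself by the normal closure of the image of $\pi_1(\R\times S^1) = \Z$, which is not yet enough. Instead I would argue directly that $\wt M$ retracts onto $\wt F$ along the flow transverse to $\widetilde{\mathcal F}$ — this is the heart of the Reebless hypothesis, that the transverse leaf-space retraction $\wt M \to \wt F$ is well defined up to homotopy because the leaf space is simply connected (any two transverse arcs between two leaves are homotopic rel endpoints through transverse arcs). This retraction carries $\wt\gamma$ to $\wt\gamma$, hence restricts to a retraction $\wt M - \wt\gamma \to \wt F - \wt\gamma$, and $\wt F - \wt\gamma$ is two copies of an open half-plane, each contractible, so $\wt M - \wt\gamma$ deformation retracts onto a space homotopy equivalent to the normal $(-\eps,\eps)$-direction of a regular neighborhood of $\wt\gamma$ in $\wt F$ glued appropriately — one checks this is homotopy equivalent to $S^1$. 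Either way $\pi_1(\wt M - \wt\gamma) \cong \Z$, generated by a meridian; combined with the tubular neighborhood this is exactly the assertion that $\wt\gamma$ is unknotted.

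The main obstacle, I expect, is making precise and correct the claim that the transverse "collapsing" map $\wt M \to \wt F$ is well defined and is a deformation retraction respecting $\wt\gamma$; this is where one genuinely needs Reeblessness (via Novikov/Rosenberg and the resulting structure of the simply connected leaf space of $\widetilde{\mathcal F}$, possibly non-Hausdorff but still simply connected as a $1$-manifold), and one must be careful that non-Hausdorffness of the leaf space does not obstruct the retraction onto the particular leaf $\wt F$ — it does not, because $\wt F$ separates $\wt M$ (being a properly embedded plane in $\wt M$, again by Reeblessness) and one can retract each side onto $\wt F$ independently.
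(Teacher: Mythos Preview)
There is a genuine error in the retraction step. You claim that a retraction $\wt M \to \wt F$ ``carries $\wt\gamma$ to $\wt\gamma$, hence restricts to a retraction $\wt M - \wt\gamma \to \wt F - \wt\gamma$''; but this implication fails, because points of $\wt M$ off $\wt\gamma$ can land on $\wt\gamma$. In the model $\wt M = \R^3$, $\wt F = \{z=0\}$, $\wt\gamma = \{y=z=0\}$, the vertical retraction sends the whole plane $\{y=0\}$ onto $\wt\gamma$. Your own computation confirms that no such restricted retraction can exist: you note that $\wt F - \wt\gamma$ is two contractible half-planes, which would force $\pi_1(\wt M - \wt\gamma)$ to be trivial, not $\Z$. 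The follow-up sentence asserting the retract is ``homotopy equivalent to $S^1$'' then contradicts what you had just computed.

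The paper instead uses Palmeira's theorem in its full global form: the lifted foliation is conjugate to $\mathcal G \times \R$ for a foliation $\mathcal G$ of the plane, so $\wt M \cong \R^3$ and $\wt\gamma$ lies in $(\text{leaf of }\mathcal G)\times \R$, a properly embedded plane. One then exhausts $\R^3$ by balls $B_i = D_i \times [a_i,b_i]$ chosen so that each leaf of $\mathcal G$ meets $D_i$ in at most one arc; the component of $\wt\gamma \cap B_i$ containing the basepoint is an arc lying on a disk in $B_i$, hence unknotted in $B_i$, and a direct limit finishes. If you wish to salvage your separation idea, the workable decomposition is $\wt M - \wt\gamma = (H_+ - \wt\gamma) \cup (H_- - \wt\gamma)$ with $H_\pm$ the closed sides of $\wt F$: once you know (again via Palmeira) that each $H_\pm$ is a half-space, each $H_\pm - \wt\gamma$ is simply connected and their intersection $\wt F - \wt\gamma$ has two components, whence van Kampen gives $\pi_1 \cong \Z$.
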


\begin{proof}
Essential means the curve is not null homotopic.
Results of Palmeira \cite{Palmeira:Open_manifolds_foliated_planes} (see also \cite{GabaiOertel}) show that, in 
the universal cover, the foliation
$\wt{\mathcal F}$ is conjugate to ${\mathcal G} \times
\R$, where $\mathcal G$ is a foliation of the plane
$\R^2$.
We do an exhaustion of $\wt{M} \cong \R^3 = 
\R^2 \times \R$ using disk times
interval in this product so that each disk intersects
a leaf of $\mathcal G$ at most once in an interval
or a point. 
We define in this way increasing balls $B_i \cong D_i \times [a_i,b_i]$ 
such that $\wt M = \bigcup_{i} B_i$, here $D_i$ is a disk.
We do this so that for $i < j$, \ $[a_i,b_i] \subset (a_j,b_j)$
and $D_i$ is a subset of the interior of $D_j$.
In addition when restricted to $B_i$ the foliation
has the form $\mathcal G_i \times [a_i,b_i]$
 where $\mathcal G_i$ is
a foliation of $D_i$, where is is assumed above that a
leaf of $\wt{\mathcal F}$ intersects $B_i$ in at most 
one component.
Fix a base point of $\wt{\gamma}$ and without
loss of generality assume it is in $B_1$ and let
$\delta_i$ be the component of $\wt{\gamma}$ in $B_i$
containing the basepoint. 
Each $\delta_i$ is contained in $\eta_i \times [a_i,b_i]$
in the product structure of $B_i$, where $\eta_i$ is
a leaf of $\mathcal G_i$. 
The curve $\delta_i$ cuts the disk 
$\eta_i \times [a_i,b_i]$ into two subdisks. 
It follows that 
$\delta_i$ is unlinked in $B_i$, or equivalently
that $\pi_1(B_i - \delta_i) \cong \Z$, or
$B_i - \delta_i$ is a solid torus.

This is true for every $i$. Using the exhaustion
$B_i, i \in \N$ and the inclusion maps
$\delta_i \rightarrow \delta_j$, $B_i \rightarrow
B_j$ if $i < j$, the result follows from 
standard elementary considerations.
\end{proof}

Notice that if $\gamma$ is a closed orbit of an Anosov flow, then $\fs$ is a Reebless foliation and
$\gamma$ is essential in its stable leaf, so we have
\begin{cor}
 Let $\gamma$ be a closed orbit of an Anosov flow on a $3$-manifold. Any lift $\wt{\gamma}$ of $\gamma$ is unknotted in $\wt{M}$.
\end{cor}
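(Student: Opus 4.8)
The plan is to deduce the corollary immediately from the preceding proposition, so the only work is to check its two hypotheses for the weak stable foliation $\fs$ and the curve $\gamma$: that $\fs$ is Reebless, and that $\gamma$ is an essential simple closed curve in one of its leaves.

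First I would recall the classical structure of the leaves of $\fs$ for an Anosov flow on a closed $3$-manifold. Each leaf $L$ of $\fs$ is saturated by flow lines, and either it contains a closed orbit $\delta$ --- in which case $L$ deformation retracts onto $\delta$ along the flow and is an open annulus or an open M\"obius band --- or it contains no closed orbit and is a plane. In particular $\fs$ has no compact leaf, hence no Reeb component, so it is Reebless. (Equivalently, one may just cite that the weak stable foliation of an Anosov flow is taut.)

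Next I would observe that, being a closed orbit of $\flot$, $\gamma$ is contained in a single leaf $L := \fs(\gamma)$, and it is an embedded circle there since distinct flow lines are disjoint and a periodic orbit does not self-intersect. By the dichotomy above, $L$ is not a plane: a periodic orbit of a nowhere-zero flow in $\R^2$ would, by Poincar\'e--Bendixson, bound a disk containing a zero of the field, which is impossible because the generating vector field $X$ of an Anosov flow never vanishes. Hence $L$ is an annulus or a M\"obius band with $\gamma$ as its core, and $\gamma$ is not null-homotopic in $L$; that is, $\gamma$ is essential.

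Finally, the proposition applies with $\mathcal{F} = \fs$ and this $\gamma$, giving that every lift $\wt{\gamma}$ to $\M$ is unknotted, i.e.\ $\M - \wt{\gamma}$ is a solid torus or $\pi_1(\M - \wt{\gamma}) \cong \Z$. There is really no obstacle here: the corollary is an observation, and the only mildly nontrivial ingredient is the standard structure theory of Anosov stable leaves used above to see that $\gamma$ is essential in $L$. I would note in passing that the same argument yields the analogous statement for closed orbits of pseudo-Anosov flows, since the (possibly singular) weak stable foliation is again Reebless and the leaf through a closed orbit --- including a singular one, which still retracts onto it --- makes that orbit essential; this needs only the evident singular counterpart of the proposition.
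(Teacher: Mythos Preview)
Your proof is correct and follows exactly the paper's approach: deduce the corollary from the preceding proposition by noting that the weak stable foliation $\fs$ is Reebless and that a closed orbit is an essential simple closed curve in its stable leaf. You simply supply more detail for these two verifications than the paper's one-line justification does.
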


\begin{rem}
The same results holds if $\gamma$ is a closed orbit
of a pseudo-Anosov flow. The stable foliation blows up
to an essential lamination $\mathcal L$ in $M$.
Gabai and Oertel (\cite{GabaiOertel}) proved
that if $\mathcal L$ is an essential lamination in
a three manifold $M$, then in the universal cover 
$\wt{\mathcal L}$ has the form 
$\mathcal G \times \R$, where $\mathcal G$ is
an essential lamination of the plane, i.e., 
without closed leaves or monogons. The same proof as in the proposition 
applies to yield the result.
\end{rem}

\section{Isotopy classes of periodic orbits: Proof of Theorem A}

This section is devoted to the proof that homotopic orbits are isotopic. Let us state precisely what we mean by isotopy.
\begin{defin}
 Two curves $c_1$ and $c_2$ in $M$ are isotopic if there exists a continuous map $H \colon S^1 \times [0,1] \rightarrow M$ such that $H(S^1,0)= c_1$, $H(S^1,0)= c_2$ and, for any $t\in [0,1]$, $H(S^1,t)$ is an embedding of $S^1$ in $M$.
\end{defin}

We stress that it is not required that
$H$ is an embedding. Equivalently it is not required that
$H({\bf S}^1 \times [0,1])$ is an 
\emph{embedded} annulus connecting $c_1$ and
$c_2$.

Before starting the proof of Theorem A, let us just reformulate it a bit more precisely.
\begin{thm}\label{thm:homo_implies_isotope} 
 Let $\flot$ be a skewed $\R$-covered Anosov flow on a closed $3$-manifold,
so that $\fs$ is transversely orientable.
If $\{ \alpha_i \}_{i \in \Z}$ is a double free homotopy class of 
periodic orbits of $\flot$ (see Definition \ref{defin:double_free_homotopy}), then all the $\alpha_i$s are isotopic.
\label{isotopic}
\end{thm}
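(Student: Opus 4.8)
The plan is to show that consecutive orbits $\alpha_i$ and $\alpha_{i+1}$ in a double free homotopy class are isotopic; since isotopy is an equivalence relation, this suffices. Fix lifts $\al i$ and $\al{i+1} = \eta(\al i)$ to $\M$, and let $\gamma \in \pi_1(M)$ be the generator of the common stabilizer (they share a stabilizer because $\eta$ is $\pi_1$-equivariant and both are corners of the lozenge $L$ produced by Proposition \ref{prop:eta_s_eta_u}). The first step is to build an immersed annulus realizing the homotopy using the universal circle. Concretely, I would use the slithering structure: the two orbits $\al i$, $\al{i+1}$ determine the same point of the universal circle $\univ$ (since $\eta^s$ and $\eta^u$ are compatible with the slithering, $\al{i+1}$ is the ``next'' corner, and they have matching ideal points in the cylinder at infinity $\mathcal A$). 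From this one constructs a ``geometric wall'' $W$ in $\M$ — a properly embedded plane, $\gamma$-invariant, interpolating between $\al i$ and $\al{i+1}$, obtained by taking, in each stable leaf $F$ of $\hfs$ that $W$ meets, a geodesic (or a flow-box-adapted arc) connecting the relevant ideal points. The quotient $W/\langle\gamma\rangle$ is an immersed annulus $\mathcal C$ in $M$ with boundary $\alpha_i \cup \alpha_{i+1}$; \emph{a priori} it is only immersed because distinct translates $gW$, $g \notin \langle\gamma\rangle$, may intersect $W$.

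The heart of the argument is then a careful analysis of the self-intersections of $\mathcal C$ and their removal. I expect this to be the main obstacle. The key structural input is that $W$ lies over a path in the leaf space/orbit space that is controlled by the lozenge $L$ and the maps $\eta^{s}, \eta^{u}$, so any intersection $W \cap gW$ projects to an intersection of the corresponding objects in $\orb$ or on $\univ$; using the H\"older-homeomorphism properties from Proposition \ref{prop:eta_s_eta_u} and the fact that in the skewed $\R$-covered case chains of lozenges never share sides, one shows the intersection pattern is very restricted — essentially the intersections are ``vertical'' (along flow direction) and occur along a discrete family of arcs. One then pushes $\mathcal C$ off itself along these arcs: each intersection arc of $\mathcal C$ with itself can be resolved by an ambient isotopy supported near the arc, because the local picture (two sheets of the wall crossing transversally inside a product chart $\mathcal G_i \times \R$ as in the Reebless-foliation argument of the previous section) is standard and the resolution can be done $\pi_1$-equivariantly upstairs. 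Iterating (with a compactness/exhaustion argument to ensure the process terminates or converges, since $M$ is compact and the orbits are compact), one arrives at an \emph{embedded} surface, hence a path of embeddings from $\alpha_i$ to $\alpha_{i+1}$, i.e.\ an isotopy.

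Two points deserve care. First, one must be sure the homotopy class of the map $S^1 \times [0,1] \to M$ is the right one — that the annulus $\mathcal C$ genuinely realizes the free homotopy of $\alpha_i$ to $\alpha_{i+1}$ (possibly with reversed flow orientation when $i \mapsto i+1$ is odd, as in Theorem \ref{thm:infinite_homotopy_class}) — which follows from tracking the $\gamma$-action on the wall against the action on the chain of lozenges in Figure \ref{fig:gamma_stabilizing_chain}. Second, the resolution of self-intersections must not be required to produce an \emph{embedded} annulus between the two orbits (that is false in general, and is precisely the content of Theorem B); we only need, at each parameter $t$, that $H(S^1, t)$ is an embedded circle. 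This is why the self-intersection analysis is run on the immersed annulus and resolved by sliding, rather than by cut-and-paste that would change the boundary: we keep $\partial \mathcal C = \alpha_i \cup \alpha_{i+1}$ fixed throughout and only isotope the interior, reading off the one-parameter family of embedded circles as level sets of the height function on the resulting embedded (possibly non-annular, but containing the desired isotopy) surface. The upshot, as noted in the introduction, is that the induced homeomorphism $M - \alpha_1 \to M - \alpha_2$ is isotopic to the identity of $M$.
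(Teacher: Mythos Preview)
Your setup is essentially the paper's: build the wall $W$ as a union of geodesics $c_t$ in the stable leaves $F_t$ between $\hfs(\al i)$ and $\hfs(\al{i+1})$, with ideal points matching those of $\al i$ via the universal circle, and take $C = \pi(W)$ as an immersed annulus. The self-intersections are indeed transverse arcs (since two geodesics in the same hyperbolic leaf either coincide or cross transversely, and coincidence can happen only at the boundaries), and there are finitely many of them on $\tau_0$.

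The genuine gap is in what you do with these arcs. You propose to ``push $\mathcal C$ off itself'' by ambient isotopy to arrive at an embedded surface; but this cannot succeed in general, precisely because Theorem B says there need not be any embedded annulus between $\alpha_i$ and $\alpha_{i+1}$. Your attempted workaround (``possibly non-annular, but containing the desired isotopy'') is not coherent: isotoping the interior of an immersed annulus rel boundary cannot change its topological type, and cut-and-paste would destroy the homotopy you are trying to realize.

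What the paper actually does is different and more delicate. The key structural fact you are missing is that each self-intersection arc, lifted to the abstract annulus $\wt C/\langle\gamma\rangle$, starts on one boundary circle and ends in the \emph{interior} --- it never goes all the way across. Equivalently, if $h\in\pi_1(M)$ produces a self-intersection, then $h(I)\not\subset I$, where $I\subset\leafs$ is the interval between $F_0$ and $F_1$. This is proved by analyzing the lozenge $\mathcal C$ with corners $\al i,\al{i+1}$: one shows that the forward and backward ideal points (in $\univ$) of any orbit inside $\mathcal C$ lie in the two complementary arcs $J_2$ and $J_1$ of $\univ\smallsetminus\{a_0,b_0\}$, and then that $h(I)\subset I$ would force a half unstable leaf to be contained in the lozenge, which is impossible.

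Once you know the self-intersection arcs do not span, the paper does \emph{not} try to embed the annulus. Instead it chooses a vertical trivialization of $\wt C/\langle\gamma\rangle$ putting each arc in a single (distinct) vertical fiber, and then a transverse horizontal trivialization such that along each self-intersection the two preimage arcs are ``staggered'' in the $t$-coordinate (one ends before the other begins). The horizontal level curves $t=\mathrm{const}$ then project to embedded circles in $M$, giving the isotopy directly on the immersed annulus. Your proposal should replace the ``resolve and embed'' step with this claim about non-spanning arcs and the trivialization argument.
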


\begin{proof}[Proof of Theorem \ref{thm:homo_implies_isotope}]
 We are going to construct an isotopy between $\alpha_0$ and $\alpha_1$. As isotopy is an equivalence relation, it will show that all free homotopic orbits are isotopic.

First we construct a particular type of immersed annulus from $\alpha_0$
to $\alpha_1$.
Let $\al 0$ a lift of $\alpha_0$ to $\M$ and ${\al 1 = \eta(\al 0)}$. 
Let $F_i = \hfs(\al i), i = 0, 1$.
Let $g$ be the element of $\pi_1(M)$ associated to $\alpha_0$
leaving $\al 0$ invariant and with attracting fixed point
the forward ideal point of 
$\wt{\alpha_0}$ (considering the action of $g$
on $F_0 \cup \partial_{\infty} F_0$).
The set of leaves of $\hfs$ between $F_0$ and $F_1$ 
forms a closed interval $I$ in the stable leaf space $\leafs$
which we parametrize as
$$I \ = \ \{ F_t \ | \ 0 \leq t \leq 1 \}$$

\noindent
The closed orbit $\alpha_0$ is isotopic to a geodesic
$\tau_0$ in $\pi(F_0)$ $-$ this uses that $\fs$ is 
transversely orientable, which in our situation implies that
the non planar leaves of $\fs$ are annuli.
Let $c_0$ be the lift of $\tau_0$ to $F_0$ with
same ideal points $a, b$ as $\al 0$.
In $F_t$, consider the geodesic $c_t$ with ideal points
which are associated to $a, b$ using the universal
circle identification. The $c_t$ vary continuously with
$t$. 
Let 
$$\widetilde C \ := \bigcup \ \{ c_t \ | \ t \in [0,1] \}$$

\noindent
This set is invariant by $g$ and hence $\wt{C}/g$ is a compact
annulus. Notice that $\tau_1 := \pi(c_1)$ is a closed curve
in $\pi(F_1)$ which is isotopic to $\alpha_1$. 
In addition $\tau_0, \tau_1$ are simple closed
curves (geodesics) in their respective stable
leaves. 

We will show that $\tau_0 = \pi(c_0)$ and $\tau_1 = \pi(c_1)$ are isotopic.
To do that we will use  the set
$C := \pi(\widetilde C)$ which  is a compact annulus in
$M$, but a priori only immersed.
If $C$ is embedded this is obvious. In the general case we will
carefully analyze the self intersections of $C$ to show
that it still produces a desired isotopy.



\begin{claim}
There are only a finite number of self-intersections of $C$ on $\tau_0$,
except perhaps for the two boundary components of $C$
being identified.
\end{claim}

\begin{proof}
It suffices to show that $C$ is transverse to itself.
A self intersection corresponds to an element $h$ of $\pi_1(M)$
so that $h(\wt{C})$ intersects $\wt{C}$.
So there is some $c_t$ with $h(c_t)$ intersecting
some other $c_{t'}$. Notice that all $c_t$ are geodesics
in their respective leaves, and hence $h(c_t)$ is also
a geodesic in $F_{t'}$. 
Suppose by way of contradiction that the intersection
of $h(c_t)$ and $c_{t'}$ is not transverse.
As both of these are geodesic in a hyperbolic plane,
then $h(c_t) = c_{t'}$. 
But then $h$ leaves invariant both points of the universal circle
$\univ$ associated to $a, b$. 

This also implies that either $h$ or $h^{-1}$ 
takes $c_0$ or $c_1$ inside $\wt{C}$.
For simplicity suppose that $h(c_0)$ is contained
in $\wt{C}$.
Then $h(c_0) = c_{t"}$ for some $t"$ in $(0,1]$. 
Hence $c_{t"}$ also projects in $M$ to $\tau_0$ and
$ \bigcup \{ c_t \ | \ 0 \leq t \leq t" \}$ projects
to an a priori immersed torus in $M$. 
This torus is a free homotopy from $\tau_0$ to itself.
So
$\bigcup \{ c_t \ | \ 0 \leq t \leq t" \}$ is
the lift of this free homotopy and hence $F_{t"}$ 
is invariant under $g$. But the only leaves
invariant under $g$ in the interval $I$ of
$\hfs$ are $F_0$ and $F_1$. It follows that
$t" = 1$. 
This corresponds to the two boundary components being identified.
This proves that $C$ is transverse to itself except
perhaps at the boundary and finishes the proof of the claim.
\end{proof}





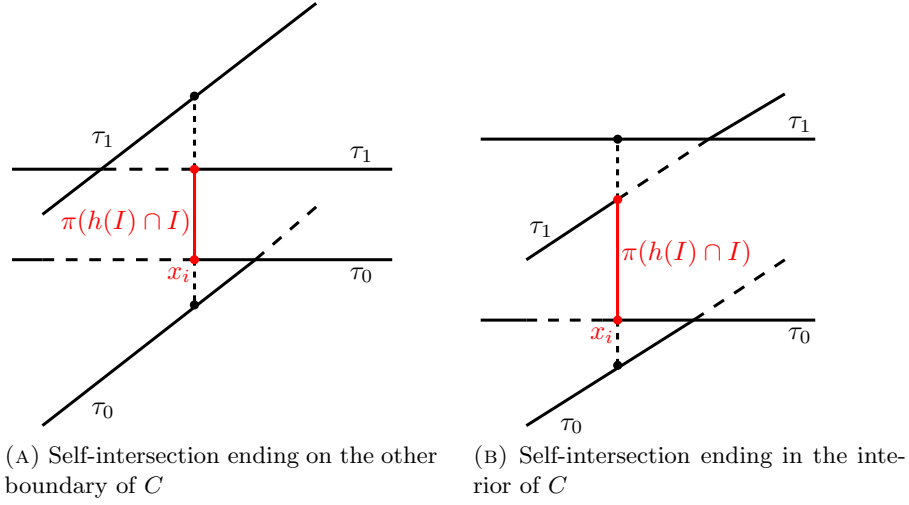
\begin{figure}[h]
\begin{subfigure}[b]{0.45\textwidth}
\centering
\begin{pspicture}(0,-2.82)(5.5,2.82)
\psline[linewidth=0.04cm](0.4,0.0)(4.0,2.8)
\psline[linewidth=0.04cm](0.4,-2.8)(3.2,-0.6)
\psline[linewidth=0.04cm](2.4,0.6)(5.0,0.6)
\psline[linewidth=0.04cm](2.4,-0.6)(5.0,-0.6)
\psline[linewidth=0.04cm,linestyle=dashed,dash=0.16cm 0.16cm](3.2,-0.6)(4.0,0.1)
\psline[linewidth=0.04cm,linestyle=dashed,dash=0.16cm 0.16cm](2.4,-0.6)(0.4,-0.6)
\psline[linewidth=0.04cm,linestyle=dashed,dash=0.16cm 0.16cm](2.4,0.6)(1.2,0.6)
\psline[linewidth=0.04cm](1.2,0.6)(0.0,0.6)
\psline[linewidth=0.04cm](0.4,-0.6)(0.0,-0.6)
\psline[linewidth=0.04cm,linecolor=red](2.4,0.6)(2.4,-0.6)
\psline[linewidth=0.04cm,linestyle=dashed,dash=0.08cm](2.4,-0.6)(2.4,-1.23)
\psline[linewidth=0.04cm,linestyle=dashed,dash=0.08cm](2.4,0.6)(2.4,1.54)
\rput(4.6,-0.8){$\tau_0$}
\rput(4.6,0.8){$\tau_1$}
\rput(1.2,-2.6){$\tau_0$}
\rput(1.2,1){$\tau_1$}
\rput(1.5,-0.1){\color{red} $\pi(h(I) \cap I)$}
\psdots[dotsize=0.12,linecolor=red](2.4,-0.6)
\psdots[dotsize=0.12,linecolor=red](2.4,0.6)
\rput(2.2,-0.8){\color{red} $x_i$}
\psdots[dotsize=0.12](2.4,-1.2)
\psdots[dotsize=0.12](2.4,1.57)
\end{pspicture} 
  \caption{Self-intersection ending on the other boundary of $C$}
\label{fig:self_inter_a} 
\end{subfigure} \quad
\begin{subfigure}[b]{0.45\textwidth}
 \centering
\begin{pspicture}(0,-2.22)(5.5,2.22)
\psline[linewidth=0.04cm](1.6,-0.8)(4.4,-0.8)
\psline[linewidth=0.04cm](0.6,-2.2)(2.8,-0.8)
\psline[linewidth=0.04cm,linestyle=dashed,dash=0.16cm 0.16cm](2.8,-0.8)(4.0,0.0)
\psline[linewidth=0.04cm,linestyle=dashed,dash=0.16cm 0.16cm](1.6,-0.8)(0.6,-0.8)
\psline[linewidth=0.04cm](0.6,0.0)(1.8,0.8)
\psline[linewidth=0.04cm,linestyle=dashed,dash=0.16cm 0.16cm](1.8,0.8)(3.0,1.6)
\psline[linewidth=0.04cm](0.0,1.6)(4.4,1.6)
\psline[linewidth=0.04cm](0.0,-0.8)(0.6,-0.8)
\psline[linewidth=0.04cm](3.0,1.6)(4.0,2.2)
\psline[linewidth=0.04cm,linestyle=dashed,dash=0.08cm](1.8,-1.4)(1.8,-0.8)
\psline[linewidth=0.04cm,linestyle=dashed,dash=0.08cm](1.8,0.8)(1.8,1.6)
\psline[linewidth=0.04cm,linecolor=red](1.8,-0.8)(1.8,0.8)
\rput(4.2,-1){$\tau_0$}
\rput(4.2,1.8){$\tau_1$}
\rput(1.2,-2.2){$\tau_0$}
\rput(0.8,0.4){$\tau_1$}
\put(1.85,0){\color{red} $\pi(h(I) \cap I)$}
\psdots[dotsize=0.12,linecolor=red](1.8,-0.8)
\psdots[dotsize=0.12,linecolor=red](1.8,0.8)
\rput(1.6,-1){\color{red} $x_i$}
\psdots[dotsize=0.12](1.8,-1.4)
\psdots[dotsize=0.12](1.8,1.6)
\end{pspicture} 
  \caption{Self-intersection ending in the interior of $C$}
 \label{fig:self_inter_b}
\end{subfigure}
 \caption{The possible self-intersections of $C$} \label{fig:self_inter}
\end{figure}

\begin{rem}
The possible tangent self intersection at the boundary of
$C$ will not be a problem for the proof of the theorem.
\end{rem}

In our analysis it is useful to
differentiate between an actual annulus $\widetilde C/g$
and the immersed annulus $C$.
 Let $x_1, \dots, x_n$ be all the self-intersections of $C$ on 
$\tau_0$ except for possible boundary tangency. 
Now we analyze in detail the self intersections of $C$, except
for the possible boundary tangency.
As described in the previous claim any such self intersection
lifts to an intersection between $\widetilde C$
and $h(\widetilde C)$ for some $h$ in $\pi_1(M)$.
Hence $h(I) \cap I \not = \emptyset$.
Using $h^{-1}$ instead of $h$ if necessary we may assume
that $h(F_0)$ is in the interval $I$. 
As $\tau_0$ is embedded then $h(F_0)
= F_d$ for some $d$ in $(0,1]$. 
In addition the claim proves that 
$h(c_0)$ intersects $c_d$ transversely.
Let $a_0, b_0$ be 
the points in the universal circle $\univ$ associated
to the ideal point $a, b$ of the geodesic $c_0$ of $F_0$.
Let $a_1, b_1$ be the points in $\univ$ associated
to the ideal points of $h(c_0)$ in the leaf $F_d$.
The fact that $h(c_0)$ and $c_d$ intersect transversely
is equivalent to the pairs \  $a_0, b_0$ \ and \ $a_1, b_1$ \ being
linked in the universal circle $\univ$ 
(i.e., $a_1$ and $b_1$ are in the two distinct connected components of $\univ \smallsetminus \{a_0,b_0\}$).
This shows that the self intersections of $C$ lifted to $\wt{C}/g$ are
compact arcs in $\wt{C}/g$ starting in one boundary of $\wt{C}/g$ and either
\begin{itemize}
 \item ending in the interior of $C$ 
 \item or ending in the other boundary component of $\wt{C}/g$.
\end{itemize}

These two possibilities correspond in the
description above to 
either $h(I) \not \subset I$, see Figure \ref{fig:self_inter_b} 
or $h(I) \subset I$ see Figure \ref{fig:self_inter_a}.

\begin{claim} \label{claim_hI_not_in_I}
We can never have $h(I) \subset I$.
\end{claim}

\begin{proof}
In order to analyze this we will explore in more detail the
structure of skewed $\R$-covered Anosov flows as related
to our situation.

The flow line $\wt{\alpha_0}$ has the same
ideal points as the geodesic $c_0$ in $F_0$.
Recall that these ideal points in
$\partial_{\infty} F_0$ are $a, b$ and assume that
$a$ is the forward ideal point of $\wt{\alpha_0}$.

Let ${\mathcal C}$ be the lozenge with corners 
$\wt{\alpha_0}, \wt{\alpha_1}$.
The lozenge ${\mathcal C}$ has stable sides in
half leaves of $F_0, F_1$ and unstable sides
in $U_0 = \hfu(\wt{\alpha_0})$ and
$U_1 = \hfu(\wt{\alpha_1})$.
We put an orientation in the 
ideal boundary
$\partial_{\infty} F_0$, so that the
interval $(b,a)$  (that is from $b$ to $a$ in
the positive orientation) corresponds to the negative ideal
points of flow lines in the stable half
leaf of $(F_0 - \wt{\alpha_0})$ which is contained
in the boundary of the lozenge ${\mathcal C}$.
The orientation in $\partial_{\infty} F_0$ induces 
an orientation in $\univ$.
Denote the interval $(b_0,a_0)$ in $\univ$
by $J_1$ and let $J_2 = (a_0,b_0)$.
The ideal points in $J_1$  correspond to the distinguishing
negative ideal points of the unstable leaves
intersecting ${\mathcal C}$.

\begin{figure}[h]
\begin{subfigure}[b]{0.45\textwidth}
\scalebox{1}{
 \begin{pspicture}(0,-2.3)(4.6,2.9)
\pscircle[linewidth=0.04,dimen=outer](2.3,0.0){2.3}
\psbezier[linewidth=0.04,ArrowInside=->,ArrowInsidePos=0.7,arrowsize=0.3](4.56,0.28)(3.3,0.38)(1.04,0.08)(0.04,-0.3)
\psbezier[linewidth=0.04,ArrowInside=->,ArrowInsidePos=0.5,arrowsize=0.3](2.66,2.22)(2.26,1.08)(1.7,0.38)(0.06,-0.28)
\psbezier[linewidth=0.04,linestyle=dashed,dash=0.16cm 0.16cm,ArrowInside=->,ArrowInsidePos=0.8,arrowsize=0.3](2.62,2.22)(2.08,1.46)(1.16,0.86)(0.24,0.96)
\psbezier[linewidth=0.04,linestyle=dashed,dash=0.16cm 0.16cm,ArrowInside=->,ArrowInsidePos=0.7,arrowsize=0.3](2.66,2.22)(2.5,1.06)(2.68,-1.12)(3.18,-2.1)

\rput(3.4,0){$\widetilde{\alpha}_0$}
\put(3,-1.2){$\gamma$?}
\rput(1.4,1.4){$\gamma$?}
\rput(1,0.5){$\gamma_0$}

\psdots[dotsize=0.14](4.56,0.28)
\psdots[dotsize=0.14](0.04,-0.3)
\psdots[dotsize=0.14](2.66,2.24)

\uput{4pt}[0](4.56,0.28){$b_0$}
\uput{4pt}[80](2.66,2.24){$b_2$}
\uput{4pt}[190](0.04,-0.3){$a_0$}

\rput(4.1,1.8){$J_1$}
\rput(4.15,-1.75){$J_2$}
\rput(0.6,-1.9){$\univ$}
\end{pspicture}  }
\caption{The possible ending points of $\gamma$}
\label{fig_is1a}
\end{subfigure}
\begin{subfigure}[b]{0.45\textwidth}
\scalebox{1}{
 \begin{pspicture}(0,-2.62)(5.42,2.62)
\psline[linewidth=0.04cm,linestyle=dashed](0.0,-1.0)(3.6,2.6)
\psline[linewidth=0.04cm,linestyle=dashed](1.6,-2.6)(5.4,1.2)
\psline[linewidth=0.04cm](1.8,-2.4)(1.8,0.8)
\psline[linewidth=0.04cm](0.2,-0.8)(3.4,-0.8)
\psline[linewidth=0.04cm](3.4,-0.8)(3.4,2.4)
\psline[linewidth=0.04cm](1.8,0.8)(5.0,0.8)
\psline[linewidth=0.04cm](1.0,0.0)(4.2,0.0)
\psdots[dotsize=0.14](1.8,0.0)
\psdots[dotsize=0.14](2.6,0.0)
\psdots[dotsize=0.14](1.8,-0.8)
\psdots[dotsize=0.14](3.4,0.8)

\uput{4pt}[80](2.6,0.0){$\gamma$}
\uput{4pt}[-135](1.8,0.0){$\gamma_0$}
\uput{4pt}[-135](1.8,-0.8){$\widetilde{\alpha}_0$}
\uput{4pt}[45](3.4,0.8){$\widetilde{\alpha}_1$}

\put(3,-0.5){\Large $\bf \mathcal{C}$}
\rput(3.8,0.2){$U$}
\put(2.4,-1.1){$U_0$}
\put(2.6,0.9){$U_1$}
\rput(3.7,2){$F_1$}
\put(1.4,-2.2){$F_0$}
\end{pspicture} }
\caption{An orbit $\gamma$ in $\mathcal C$}
\label{fig_is1b}
\end{subfigure}
\caption{}
\label{fig_is1}
\end{figure}

We want to determine exactly what are the orbits
in the lozenge ${\mathcal C}$, i.e., given an orbit $\gamma \in \mathcal C$ we want to understand where the forward ideal point of $\gamma$ is, relatively to the forward ideal point of $\wt{\alpha}_0$ (see Figure \ref{fig_is1}).
Figure \ref{fig_is1a} is a schematic, but not rigorous,
drawing of several orbits
in question. The boundary circle is the universal circle
and the orbits in question represent several orbits in
$\wt{M}$ whose endpoints in their respective
leaves corresponds to those points in $\univ$
which are the ideal points of the curve in the diagram.
The diagram is not precise for two reasons. First, not all orbits are
in the same leaf of the stable foliation, so this
is not in a single leaf but in a superposition of leaves each drawn on tracing paper. And second, two ideal points
of $\univ$ do not define a single flow line in
$\widetilde M$ (indeed, two points in $\univ$ defines a $\Z$ worth of flow lines).
Still this schematic drawing will be extremely useful.
In the case of the geodesic flow this drawing
can be made precise as the universal cover of the
surface with $\univ$ the ideal circle of this
hyperbolic plane.

Let $\gamma$ be
an orbit in ${\mathcal C}$. 
Then $\hfu(\gamma)$ intersects
the half leaf of $F_0 - \wt{\alpha_0}$ in an orbit that we call $\gamma_0$.
The negative ideal point of
$\gamma_0$ is in $(b,a)$ and hence it corresponds
to a point in $J_1$, by definition.

\vskip .1in
Recall that the identification of $\univ$ with
the circle
$C_s = \leafs/\eta^u \circ \eta^s$ means that we are at least
locally parametrizing the stable leaf by the ideal
point in its boundary (or point in $\univ$) which is the forward
ideal point of all flow lines in that stable leaf.
The orbit $\gamma$ is not in $F_0$ so we cannot strictly 
draw it in the same leaf $F_0$. Intuitively we think of
it as connecting two ideal points in its stable leaf
and hence connecting two points of the universal circle
$\univ$. In this extended meaning we can still
draw the orbit $\gamma$ in the same diagram which has
$\wt{\alpha_0}$ and $\gamma_0$.
Let the points in $\univ$ defined by $\gamma$ be
$a_2, b_2$, with $a_2$ the positive ideal point (see Figure \ref{fig_is1a}). 
The key is to determine
whether $a_2$
is in $J_1$ or $J_2$.

The ideal points $a_0, b_2$ of
$\gamma_0$ (as a point in $\univ$) define
two complementary intervals in $\univ$. These intervals are $N_1:= (b_2,a_0)$, and $N_2 := (a_0,b_2)$. The interval
$N_2$ contains $J_2$ and $N_1$ is contained in 
$J_1$ --- this is because $b_2$ is in $(b_0,a_0)$.
The interval $N_1$ consist of the forward ideal points of the flow lines in the half-leaf of $\hfu(\gamma_0) $ on one side of $\gamma_0$ and $N_2$ corresponds to those on the other side. Let $U$ be the
half leaf of $\hfu(\gamma_0)$ that intersects the lozenge $\mathcal C$. Suppose that $\gamma_0$ ends in $N_1$, then $U$ is contained in $N_1$, and hence, contained in $J_1$.
But $J_1$ consist of all the forward ideal points of flow lines in the half-leaf of $U_0$ contained in the boundary of $\mathcal C$. Which forces $U$ to be contained in the lozenge $\mathcal C$, which is obviously
not true as seen in Figure \ref{fig_is1b}.

We deduce that $\gamma$ has forward ideal point
in the interval $N_2$. We still need to show that the forward ideal
point is in $J_2$ and not in $J_1$.
The orbit $\gamma$ is in the lozenge $\mathcal C$ and hence
$\hfs(\gamma)$ intersects $\hfu(\wt{\alpha_0})$.
The positive ideal points of flow lines in the side of
${\mathcal C}$ contained in $U_0$ are exactly those in
$J_2$. This shows that the positive ideal point of
$\gamma$ is in $J_2$.
In fact $J_2$ is exactly the interval of positive ideal
points of such $\gamma$ in ${\mathcal C}$.
Therefore:

\vskip .1in
\noindent
{\bf Conclusion:} \ Locally the orbits in ${\mathcal C}$ 
are exactly those that have negative
ideal point in $J_1$ and forward ideal point in $J_2$.
\vskip .1in

Now we return to the proof of the claim. 
Let $h$ be in $\pi_1(M)$ with $h(c_0)$ 
intersecting $c_d$ transversely, so that $h(I) \cap I \not = \emptyset$.
In other words $h$ sends the ideal points $a_0, b_0$
of $\wt{\alpha_0}$ (seen as points in $\univ$) to
points in $J_1, J_2$ respectively or vice versa.

We flesh this out a little more. 
Once again parametrizing the stable leaf in $I$ with their ideal points, 
we see that the interior of $I$ projects to the interval $J_2 = (a_0,b_0)$ in $\univ$. 
Since $h(F_0) = F_d$ for
some $d > 0$ then the positive ideal point of $h(\wt{\alpha_0})$ has
to be in $J_2$. It follows that the negative ideal
point of $h(\wt{\alpha_0})$ is in $J_1$ and by the
conclusion above $h(\wt{\alpha_0})$ is an orbit
in the lozenge ${\mathcal C}$. 

The interior of $I$ corresponds to the half leaf $Z$.
of $\hfu(\wt{\alpha_0})$ in the boundary of the
lozenge $\mathcal C$. Hence $h(I) \subset I$ implies
that $h(Z)$ is contained in the lozenge $\mathcal C$.
But no half leaf is contained in a lozenge,
so this is a contradiction.
This finishes the proof of Claim \ref{claim_hI_not_in_I}.
\end{proof}

Let us recap what we have proved so far: the transverse
self intersections
of $C$ when lifted to $\wt{C}/g$ are a finite number of compact arcs $\delta_i$
so that each arc starts in one of $c_0/g$ or $c_1/g$ and
ends in the interior of $\wt{C}/g$.
We will trivialize the annulus 
$\wt{C}/g$ in order to produce an isotopy
from $\tau_0$ to $\tau_1$.

First produce a vertical trivialization of $\wt{C}/g$ $-$ that is,
a vertical foliation of the annulus
$\wt{C}/g$ by compact arcs $-$
so that \ I) every arc $\delta_i$ is in a vertical fiber
and II) distinct arcs are in different vertical fibers.
Now we produce a horizontal trivialization of 
$\wt{C}/g$ $-$ a horizontal foliation by simple
closed curves as follows.
The horizontal trivialization is chosen to be
transverse to the vertical foliation and to satisfy
the following property.
Suppose that $\delta_i$ is a curve of self intersection of
$C$, which, when lifted to $\wt{C}/g$, is contained in
$x \times [0,1]$ and also in $y \times [0,1]$ in the
double trivialization of $\wt{C}/g$. 
We assume that it is contained in $x \times [0,e_0]$ and
also in $y \times [e_1,1]$. We do the trivialization so that
when mapped into $M$ the point $(x,t)$ is never mapped
to the same point as $(y,t')$ where $t' < t$. In other
words when pulled back to
$\wt{C}/g$, the pre image starts first
in $x \times [0,1]$ and then in $y \times [0,1]$. The condition
above guarantees that as $t$ changes the points in
$y \times [0,1]$ never catch up with the points in 
$x \times [0,1]$ when mapped to
$C$ in $M$.
See Figure \ref{fig_horizontal_trivia}, 
where we assumed that $\delta_i$ was the only self-intersection in the range of the drawing.
The $\delta_i$ in the figure represent the two pullbacks
of $\delta_i$ to $\wt{C}/g$.

\begin{figure}[h]
 \begin{pspicture}(0,-1.8)(6,1.8)
\psline[linewidth=0.04cm](0.0,-1.5)(6.0,-1.5)
\psline[linewidth=0.04cm](0.0,1.5)(6.0,1.5)
\psline[linewidth=0.04cm,linecolor=red](1.2,-1.5)(1.2,0.5)
\psline[linewidth=0.04cm,linestyle=dashed,dash=0.16cm 0.16cm](1.2,0.5)(1.2,1.5)

\psline[linewidth=0.04cm,linestyle=dashed,dash=0.16cm 0.16cm](4.0,-1.5)(4.0,-0.3)
\psline[linewidth=0.04cm,linecolor=red](4.0,-0.3)(4.0,1.5)
%
%

\psline[linewidth=0.04cm](0.0,-1)(6.0,-1)
\psline[linewidth=0.04cm](0.0,-0.5)(6.0,-0.5)
\psline[linewidth=0.04cm](0.0,0)(6.0,0)
\psline[linewidth=0.04cm](0.0,0.5)(6.0,0.5)
\psline[linewidth=0.04cm](0.0,1)(6.0,1)

\psdots[dotsize=0.12,linecolor=red](1.2,-1.5)
\rput(0.8,-1.75){$(x,0)$}
\psdots[dotsize=0.12,linecolor=red](1.2,0.5)
\rput(0.7,0.7){$(x,e_0)$}
\psdots[dotsize=0.12](1.2,1.5)
\rput(0.8,1.75){$(x,1)$}

\rput(1,-0.3){$\color{red} \delta_i$}

\psdots[dotsize=0.12,linecolor=red](4.0,-0.3)
\put(4.1,-0.35){$(y,e_1)$}
\psdots[dotsize=0.12,linecolor=red](4.0,1.5)
\put(4.1,1.65){$(y,1)$}
\psdots[dotsize=0.12](4.0,-1.5)
\put(4.1,-1.8){$(y,0)$}

\put(4.1,0.7){$\color{red} \delta_i$}
\end{pspicture}  
\caption{The horizontal trivialization in $\wt{C}/g$}
\label{fig_horizontal_trivia}
\end{figure}
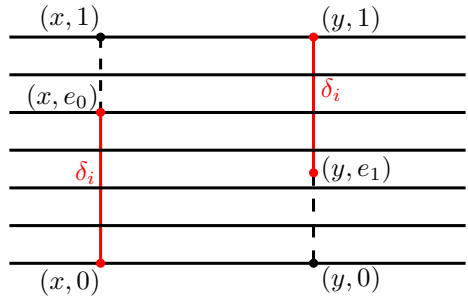

Now in $\wt{C}/g$ consider the isotopy
from $c_0/g$ to $c_1/g$ given 
by
$$H_t(z,0) \ = \ (z,t)$$

\noindent
Project this homotopy to $M$. The condition above implies that
the images $H_t({\bf S}^1 \times t)$ never self intersect. The only places where they could self intersect would
be at the self intersections of $C$. At these arcs of self
intersection, the condition above rules out the self intersection.

We conclude that the projection to $M$ of the map $H$ is an
isotopy from $\tau_0$ to $\tau_1$. 
Notice that the possible tangent self intersections
of $C$ are not a problem for this argument as they
happen when $t = 0$ and $t = 1$.
This finishes the proof of theorem \ref{isotopic}.
\end{proof}

\begin{rem}
We now comment on the hypothesis that $\fs$ is
transversely orientable. Suppose that $\fs$ is not
transversely orientable and let $g$ in $\pi_1(M)$ 
reversing local orientation. Then $g$ reverses
orientation in $\leafs$ and so has a unique
fixed leaf, call it $F_0$. Then $g$ leaves invariant
a flowline $\gamma_0$ in $F_0$. The square
$g^2$ fixes a ${\bf Z}$ worth of leaves
$F_i$ in $\leafs$. For each $i$ there is a flowline
$\gamma_i$ in $F_i$ so that $g^2(\gamma_i) =
\gamma_i$. Let $\alpha_i = \pi(\gamma_i)$.
The problem is that $\alpha^2_0$ is freely homotopic
to $\alpha^2_i$ for any $i$, but 
$\alpha_0$ is {\underline {not}} freely homotopic
to $\alpha_i$. 
One can conceivably consider $\alpha_0^2$ instead
of $\alpha_0$. As the stable leaf of $\alpha_0$ is
a Mobius band then $\alpha^2_0$ is homotopic to
a simple closed curve. However it is not a priori
true that the stable leaf of $\alpha_1$ also is
a Mobius band, so $\alpha^2_1$ may not be represented
by a simple curve in its stable leaf. 
\end{rem}

\section{Co-cylindrical class} \label{sec_co-cylindrical_class}

Among isotopic orbits of a pseudo-Anosov flow, we define:
\begin{defin}
 Two curves $c_1$ and $c_2$ in $M$ are \emph{co-cylindrical} if there exists an embedded annulus $A$ in $M$ such that $\partial A = c_1 \cup c_2$.
\end{defin}
Notice that this is \emph{not} an equivalence relation as it is clearly non-transitive. However, as we will see, its study is quite interesting.
We will restrict to 
skewed, $\R$-covered Anosov flows with
$\fs$ transversely orientable.

The existence of co-cylindrical orbits will be linked to the action of the fundamental group on the orbit space, or more precisely, the action of the fundamental group on the chain of lozenges defined by two orbits.
\begin{defin}
 Let $C = \bigcup L_i$ be a chain of lozenges. The chain $C$ is said to be \emph{simple} if the orbit of the corners under $\pi_1(M)$ does not intersect $C$.
\end{defin}

We will now show the link between having two co-cylindrical periodic orbits and simple chain of lozenges. This is essentially based on Barbot's work \cite{Bar:MPOT}.


In \cite{Bar:MPOT} (see also \cite{BarbotFenley}) Barbot studied embedded tori in (toroidal) $3$-manifolds supporting skewed $\R$-covered Anosov flows, showing that they could be put in a quasi-transverse position (i.e., transverse to the flow, apart from along some periodic orbits). We will use his work to obtain properties of embedded annuli:
\begin{thm} \label{thm:simple_and_cocylindrical}
 Let $\alpha$ and $\beta$ be two orbits in the same free homotopy class, choose coherent lifts $\wt{\alpha}$ and $\wt{\beta}$, and denote by $B(\wt{\alpha}, \wt{\beta})$ the chain of lozenges between $\wt{\alpha}$ and $\wt{\beta}$.\\
 If $\alpha$ and $\beta$ are co-cylindrical, then $B(\wt{\alpha}, \wt{\beta})$ is simple, i.e., if we denote by $(\al i)_{i=0 \dots n}$ the corners of the lozenges in $B(\wt{\alpha}, \wt{\beta})$, with $\al 0 = \wt{\alpha}$ and $\al n= \wt{\beta}$, then
\begin{equation*}
 \left( \pi_1(M) \cdot \al i \right) \cap B(\wt{\alpha}, \wt{\beta}) = \emptyset
\end{equation*}(using once again the two identifications).
Conversely, if $B(\wt{\alpha}, \wt{\beta})$ is simple, then there exists an embedded annulus, called a Birkhoff annulus, with boundary $\alpha \cup \beta$.
\end{thm}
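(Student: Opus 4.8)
The plan is to translate both implications into combinatorics of the orbit space $\orb$, using Barbot's analysis of $\pi_1$-injective surfaces in manifolds carrying skewed $\R$-covered Anosov flows \cite{Bar:MPOT} (see also \cite{BarbotFenley}), adapted from tori to annuli with periodic boundary. I will freely use the identifications of the previous section (universal circle with $\leafs/\eta^u\circ\eta^s$ and the strip model of $\orb$), together with the fact that between two coherent lifts there is a \emph{unique} chain of lozenges $B(\wt\alpha,\wt\beta)$, whose corners $\al 0=\wt\alpha,\dots,\al n=\wt\beta$ are lifts of periodic orbits $\alpha=\alpha_0,\dots,\alpha_n=\beta$ all fixed by the common generator $g\in\pi_1(M)$. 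The key external input is Barbot's \emph{quasi-transverse position} theorem: an embedded $\pi_1$-injective annulus with periodic boundary orbits can be isotoped into a concatenation of Birkhoff annuli, each a union of flow segments with two periodic orbits as boundary and transverse to $\flot$ elsewhere; and, conversely, such concatenations exist and are recorded by chains of lozenges, the Birkhoff annulus over a single lozenge meeting transversely every flowline whose orbit lies over the interior of that lozenge.

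For the direct implication, let $A$ be an embedded annulus with $\partial A=\alpha\cup\beta$. As $\alpha$ and $\beta$ are freely homotopic, $A$ is $\pi_1$-injective, so I would replace $A$, up to isotopy, by an embedded quasi-transverse annulus $A'$ with the same boundary, written as a concatenation of Birkhoff annuli $A'_1,\dots,A'_m$ with consecutive boundary orbits $\gamma_0=\alpha,\gamma_1,\dots,\gamma_m=\beta$. Lifting $A'$ to the strip $\wt{A'}\subset\M$ with $\partial\wt{A'}=\wt\alpha\cup\wt\beta$ and passing to $\orb$, the corners $\wt\gamma_0,\dots,\wt\gamma_m$ form a chain of lozenges from $\wt\alpha$ to $\wt\beta$; by uniqueness of the chain this is $B(\wt\alpha,\wt\beta)$, so $m=n$ and, after relabelling, $\gamma_i=\alpha_i$. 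Now if some $g'\in\pi_1(M)$ had $g'\al i$ in the open set $B(\wt\alpha,\wt\beta)$, i.e.\ strictly inside some lozenge $L_j$, then the periodic orbit $\alpha_i=\pi(\al i)$, which is contained in $A'$, would also be met \emph{transversely} by $A'$ over the interior of $L_j$; an embedded surface cannot both contain a curve and be transverse to it, contradicting embeddedness of $A'$. Hence $(\pi_1(M)\cdot\al i)\cap B(\wt\alpha,\wt\beta)=\emptyset$ for each $i$, i.e.\ the chain is simple.

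For the converse, assume $B(\wt\alpha,\wt\beta)=\bigcup_{i=0}^{n-1}L_i$ is simple. For each $L_i$ I would invoke the classical construction of the Birkhoff annulus of a lozenge (Fenley, Barbot): an immersed annulus $A_i\subset M$ built from flow segments, with $\partial A_i=\alpha_i\cup\alpha_{i+1}$, transverse to $\flot$ off its boundary, and whose lift has image in $\orb$ the closure $\overline{L_i}$. These glue along the shared periodic orbits $\alpha_i$ into an immersed quasi-transverse annulus $\mathcal A$ with $\partial\mathcal A=\alpha\cup\beta$, with $\wt{\mathcal A}$ projecting onto $\bigcup_i\overline{L_i}$. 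It remains to show $\mathcal A$ is embedded. Consecutive pieces $A_i,A_{i+1}$ meet only along $\alpha_{i+1}$ and fit together there embeddedly by quasi-transversality, so there is no self-intersection inside a fundamental domain. Any other self-intersection gives $g'\in\pi_1(M)\setminus\langle g\rangle$ with $\wt{\mathcal A}\cap g'\wt{\mathcal A}\neq\emptyset$, hence $\bigcup_i\overline{L_i}$ meets $g'\bigl(\bigcup_i\overline{L_i}\bigr)$ in $\orb$. Here I would use the rigidity of intersections of lozenges for skewed $\R$-covered flows: two chains that meet must either share a corner or have a corner of one lying inside (or on a side of) a lozenge of the other. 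Running through these cases, and using that the sides of the $L_i$ are full (un)stable half-leaves (so that a translated side cannot penetrate the chain without a translated corner doing so), one extracts $g'\al k\in B(\wt\alpha,\wt\beta)$ for some $k$, contradicting simplicity. Hence $\mathcal A$ is the desired embedded Birkhoff annulus with boundary $\alpha\cup\beta$.

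The main obstacle is the converse, specifically the passage from the combinatorial simplicity of the chain to embeddedness of $\mathcal A$: one must control \emph{every} translate $g'\bigl(\bigcup_i\overline{L_i}\bigr)$ of the chain in the orbit space and rule out those that touch the chain itself. The two delicate points are the outermost boundary sides of $L_0$ and $L_{n-1}$, which lie outside the open set $B(\wt\alpha,\wt\beta)$ so that a translated corner landing on them is not excluded a priori by simplicity and must be handled by hand, and the quasi-transverse gluing of the Birkhoff pieces along the orbits $\alpha_i$, where two sheets of $\mathcal A$ are tangent to the flow and one must verify they do not interlace. This is precisely where the techniques of \cite{Bar:MPOT} are brought to bear.
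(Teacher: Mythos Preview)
Your proposal is correct and in the same spirit as the paper's: both implications are reduced to Barbot's analysis in \cite{Bar:MPOT} (and \cite{BarbotFenley}). For the converse the paper simply cites \cite{Bar:MPOT} for the construction of the embedded Birkhoff annulus from a simple chain; your more detailed sketch of why simplicity forces embeddedness is precisely the content of that citation, and the delicate points you flag (outermost sides, the gluing along the $\alpha_i$) are exactly what is handled there.

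The one substantive difference is in the forward implication. You begin by isotoping the given embedded annulus $A$ into a quasi-transverse annulus $A'$ and then read off simplicity from the chain it sits over, via the clean contradiction that an embedded $A'$ cannot both contain $\alpha_i$ and meet it transversely. The paper does \emph{not} isotope $C$: it instead independently builds the quasi-transverse Birkhoff model $\wt{C}_0$ over $B(\wt\alpha,\wt\beta)$ using Barbot's rectangle trick, restricts $\M$ to the slab between $\hfs(\wt\alpha)$ and $\hfs(\wt\beta)$, and then applies verbatim step~1 of the proof of \cite[Theorem~6.10]{BarbotFenley} (equivalently Lemma~7.6 of \cite{Bar:MPOT}) to the pair $(C,\wt{C}_0)$ to extract simplicity. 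In effect your route packages the hard step as ``quasi-transverse position for embedded $\pi_1$-injective annuli with periodic boundary''; this is the annulus analogue of Barbot's torus theorem and is obtainable by the same methods, but it is not something one can quote directly from \cite{Bar:MPOT}, which is why the paper unpacks it into the model construction plus the cited step instead.
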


\begin{proof}
 Construction of an embedded Birkhoff annulus from a simple chain of lozenges is done in \cite{Bar:MPOT}, hence proving the converse part.

To prove that, if $\alpha$ and $\beta$ are co-cylindrical, then $B(\wt{\alpha}, \wt{\beta})$ is simple, we have to re-prove Lemma 7.6 of \cite{Bar:MPOT} (or equivalently step 1 of the proof of Theorem 6.10 of \cite{BarbotFenley}) when, instead of having an embedded torus, we just have an embedded cylinder.\\
 Let $C$ be an embedded cylinder such that $\partial C = \lbrace \alpha, \beta \rbrace$ and $\wt{C}$ the lift of $C$ in $\M$ such that its boundary is on $\wt{\alpha}$ and $\wt{\beta}$. Let us also denote the generator of the stabilizer of $\wt{\alpha}$ by $\gamma \in \pi_1(M)$. Following \cite{Bar:MPOT}, we can construct a embedded plane $\wt{C}_0$ in $\M$ such that
\begin{itemize}
 \item $\wt{C}_0$ is $\gamma$-invariant, 
 \item $\wt{C}_0$ contains all the $\al i$,
 \item $\wt{C}_0$ is transverse to $\hflot$ except along the $\al i$,
 \item the projection of $\wt{C}_0$ to $\orb$ is $B(\wt{\alpha}, \wt{\beta})$.
\end{itemize}
Barbot's trick to obtain such a plane is, for every lozenge in $B(\wt{\alpha}, \wt{\beta})$, to take a simple curve $\bar{c}$ from one corner of the lozenge to the other (for instance $\al i$ and $\al{i+1}$). Then, lift $\bar{c}$ to $\wt{c}\subset \M$ such that $\wt{c}$ is transverse to $\hflot$. Now, choose an embedded rectangle $R_i$ in $\M$ such that $R_i$ is bounded by $\wt{c}$, $\gamma \cdot \wt{c}$, and the two pieces of $\al i$ and $\al{i+1}$ between the endpoints of $\wt{c}$ and $\gamma \cdot \wt{c}$. Then define $\wt{C}_0$ as the orbit under $\gamma$ of the unions of the rectangles $R_i$.

Replacing $\M$ by the subset of $\M$ delimited by $\hfs(\wt{\alpha})$ and $\hfs(\wt{\beta})$ and containing $\wt{C}_0$, we can copy verbatim the proof of \cite[Theorem 6.10, step 1]{BarbotFenley} and obtain that $B(\wt{\alpha}, \wt{\beta})$ is simple.
\end{proof}

Using the theorem, we can deduce the following property of co-cylindrical classes:
\begin{prop}\label{prop:cardinality_isotopy_class}
If the co-cylindrical class of one orbit is finite, then all the co-cylindrical classes in the same double free homotopy class are finite. Moreover, they all have the same cardinality.
\end{prop}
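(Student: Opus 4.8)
The key observation is that the map $\eta$ (or rather the induced homeomorphism of $\orb$) is $\pi_1(M)$-equivariant by Proposition \ref{prop:eta_s_eta_u}, and it shifts the double free homotopy class along: if $\al i = \eta^i(\wt{\alpha})$, then $\eta$ sends the chain $B(\al i, \al j)$ to the chain $B(\al{i+1}, \al{j+1})$. The plan is to use $\eta$, together with Theorem \ref{thm:simple_and_cocylindrical}, to set up a cardinality-preserving correspondence between the co-cylindrical classes of $\alpha_0$ and $\alpha_1$ (hence, by iterating, between any two $\alpha_i$ and $\alpha_j$).

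First I would make precise what ``co-cylindrical class of $\alpha_i$'' means: by Theorem A all the orbits in a double free homotopy class are isotopic, so the relevant orbits are exactly the other $\alpha_j$ in the same double free homotopy class, and $\alpha_j$ is in the co-cylindrical class of $\alpha_i$ iff there is an embedded annulus with boundary $\alpha_i \cup \alpha_j$. By Theorem \ref{thm:simple_and_cocylindrical} (applied to coherent lifts), this is equivalent to the chain $B(\al i, \al j)$ being simple, i.e. $(\pi_1(M)\cdot \al k)\cap B(\al i,\al j)=\emptyset$ for the corners $\al k$ of that chain. Next I would observe that $\eta$ descends to a $\pi_1(M)$-equivariant homeomorphism of $\orb$ carrying $\al k$ to $\al{k+1}$ and the chain $B(\al i,\al j)$ to the chain $B(\al{i+1},\al{j+1})$; since it is $\pi_1(M)$-equivariant it carries $\pi_1(M)$-orbits to $\pi_1(M)$-orbits, so $B(\al i,\al j)$ is simple if and only if $B(\al{i+1},\al{j+1})$ is simple. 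Translating back: $\alpha_j$ is co-cylindrical with $\alpha_i$ iff $\alpha_{j+1}$ is co-cylindrical with $\alpha_{i+1}$. This defines a bijection $\alpha_j \mapsto \alpha_{j+1}$ from the co-cylindrical class of $\alpha_i$ onto the co-cylindrical class of $\alpha_{i+1}$, so these two sets have the same cardinality; iterating, all co-cylindrical classes in the double free homotopy class have the same cardinality, and in particular if one is finite they all are (with the same finite cardinality).

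The main subtlety — the step I expect to require the most care — is checking that the equivalence ``co-cylindrical $\Leftrightarrow$ simple chain'' from Theorem \ref{thm:simple_and_cocylindrical} is applied to the correct, coherently chosen lifts, and that the shift by $\eta$ genuinely takes the simple-chain condition for the pair $(\al i,\al j)$ to that for $(\al{i+1},\al{j+1})$ rather than to some reindexed or reversed chain (recall from Theorem \ref{thm:infinite_homotopy_class} that $\eta$ alternates the flow direction, so one must be slightly careful that ``$\alpha_j$ freely homotopic to $\alpha_i$'' versus ``$\alpha_j$ freely homotopic to $\alpha_i$ reversed'' is handled uniformly — but since $\eta$ shifts the whole indexing by one and the embedded-annulus condition is insensitive to orientation of the boundary curves, this causes no real trouble). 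Once the dictionary between chains and embedded annuli is set up correctly, the $\pi_1(M)$-equivariance of $\eta$ does all the work and the argument is essentially formal.
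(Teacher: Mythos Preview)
Your proposal is correct and follows essentially the same route as the paper: both arguments rest on the $\pi_1(M)$-equivariance of $\eta$ (Proposition \ref{prop:eta_s_eta_u}) together with Theorem \ref{thm:simple_and_cocylindrical}, using that $\eta$ carries the chain $B(\al i,\al j)$ to $B(\al{i+1},\al{j+1})$ and hence preserves simplicity. The paper phrases this concretely by tracking a specific element $h\cdot\al 0 \in L(\al{k-1},\al k)$ under $\eta^i$, while you package the same computation as a bijection $\alpha_j\mapsto\alpha_{j+1}$ between co-cylindrical classes; these are two presentations of the same argument.
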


\begin{proof}
This result just relies on the fact that the image of a chain of lozenges under a deck transformation is a chain of lozenges. Equivalently, we use that the homeomorphism $\eta$ of $\orb$, defined in Proposition \ref{prop:eta_s_eta_u}, commutes with the action of $\pi_1(M)$.

 Let $\left\{\alpha_i\right\}_{i\in \Z}$ be a double free homotopy class of periodic orbit, and take $\left\{\al i\right\}_{i\in \Z}$  a coherent lift to $\M$. Suppose that $\alpha_0$ is such that $B(\al 0, \al{k-1})$ is simple and $B(\al 0, \al k)$ is non-simple. That is, there exists an element $h\in \pi_1(M)$ such that $h \cdot \al 0$ is in $L(\al{k-1}, \al k)$, the lozenge with corners $\al{k-1}$ and $\al k$. Thanks to Theorem \ref{thm:simple_and_cocylindrical}, showing that, for any $i\in \Z$, $B(\al i, \al{k+i-1})$ is simple and $B(\al i, \al{k+i})$ is non-simple will prove our claim.

 Now, recall that $\al i = \eta^i (\al 0)$ (Theorem \ref{thm:infinite_homotopy_class}), so, 
\begin{equation*}
h \cdot \al i = h \cdot \eta^{i}(\al 0) = \eta^{i}\left(h \cdot \al 0 \right) \in \eta^{i}\left(L(\al{k-1}, \al k) \right) = L(\al{k+i-1}, \al{k+i}),
\end{equation*}
which implies that $B(\al i, \al{k+i})$ is non-simple. Clearly, this argument also implies that if $B(\al i, \al{k+i-1})$ was non-simple, then $B(\al 0, \al{k-1})$ would not be simple either.
\end{proof}


\section{Action of the fundamental group on $\univ$ and co-cylindrical orbits} \label{sec:action_of_pi1}

Thanks to Thurston's work in \cite{Thurston:3MFC}, we know that the fundamental group of a $3$-manifold 
admitting an $\R$-covered foliation acts on the universal circle 
of (say) the stable foliation. 
There is a remarkable link between the existence of co-cylindrical orbits and the action of $\pi_1(M)$ on \emph{pairs} of points in $\univ$.

Recall that $\phi$ is a skewed, $\R$-covered Anosov flow
with $\fs$ transversely orientable and $\univ$ is the universal
circle of $\fs$.
The reason we are interested in pairs of points in $\univ$ is that an orbit of $\hflot$ ``projects'' to two points on the universal circle. First, let us identify $\univ$ with $\leafs/ \eta^u\circ \eta^s$. The function $\eta^u$ induces an homeomorphism between $\leafu/ \eta^s\circ \eta^u$ and $\univ$ that preserves the action of $\pi_1(M)$.
 Let $\wt{\alpha}$ be an orbit of $\hflot$. The stable leaf $\hfs(\wt{\alpha})$ defines a point in $\univ$ by our identification, and $\hfu(\wt{\alpha})$ defines another point via $\eta^u$. Hence, an orbit defines two points on $\univ$. We can also see those two points as the projection to $\univ$ of the ideal points of the orbit $\wt{\alpha}$ on the boundary at infinity of the stable leaf $\hfs(\wt{\alpha})$.

\begin{defin}
 Let $(a^+, a^-)$ and $(b^+, b^-)$ be two pairs of points in $\univ$. We say that $(a^+, a^-)$ and $(b^+, b^-)$ are \emph{linked} if, for some order on $\univ$, we have
\begin{equation*}
 a^- < b^- <a^+ < b^+.
\end{equation*}
\end{defin}

\begin{prop}\label{prop:intersects_equivalent_simple}
 Let $\alpha$ be a periodic orbit of $\flot$, $\wt{\alpha}$ a lift to $\M$ and $(a^+, a^-)$ the projection of $\wt{\alpha}$ on $\univ$.
 The co-cylindrical class of $\alpha$ is finite if and only if there exists $h\in \pi_1(M)$ such that $(a^+, a^-)$ and $(h \cdot a^+, h \cdot a^-)$ are linked.
\end{prop}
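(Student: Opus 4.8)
The plan is to route the statement through the chain of lozenges $B(\wt\alpha,\cdot)$ together with Theorem~\ref{thm:simple_and_cocylindrical}, reducing it to a combinatorial condition on a bi-infinite chain, and then to translate that condition into the universal circle. First, set $\al 0 = \wt\alpha$, let $\gamma\in\pi_1(M)$ generate its stabilizer, and let $\{\al i = \eta^i(\al 0)\}_{i\in\Z}$ be the coherent family of lifts (Theorem~\ref{thm:infinite_homotopy_class}); write $\mathcal D = \bigcup_{m\in\Z} L(\al m,\al{m+1})$ for the bi-infinite $\eta$-chain through $\al 0$. Since every orbit is a corner of exactly two lozenges, and for $\al 0$ these are $L(\al{-1},\al 0)$ and $L(\al 0,\al 1)$ (Proposition~\ref{prop:eta_s_eta_u} and the discussion following it), any chain of lozenges having $\al 0$ as a corner is a sub-chain of $\mathcal D$; hence a coherent lift of any orbit co-cylindrical with $\alpha$ is some $\al j$, and by Theorem~\ref{thm:simple_and_cocylindrical} the co-cylindrical class of $\alpha$ is exactly $\{\,\pi(\al j)\mid B(\al 0,\al j)\text{ is simple}\,\}$. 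Because $\eta$ is $\pi_1(M)$-equivariant and carries corners to corners, $B(\al 0,\al n)$ is simple if and only if $B(\al{-n},\al 0)=\eta^{-n}\bigl(B(\al 0,\al n)\bigr)$ is, and non-simplicity is inherited by every larger chain in the family; so the co-cylindrical class of $\alpha$ is finite if and only if $B(\al 0,\al n)$ is non-simple for some $n$. Unwinding the definition of simple together with the identity $h\cdot\al i=\eta^i(h\cdot\al 0)$, this is in turn equivalent to the existence of $h\in\pi_1(M)$ with $h\cdot\al 0$ in the interior of some lozenge of $\mathcal D$.

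The second step identifies this last condition with the linking condition on $\univ$. Recall from the Conclusion reached inside the proof of Claim~\ref{claim_hI_not_in_I} that the orbits contained in $\mathcal C = L(\al 0,\al 1)$ are precisely those whose forward ideal point projects into the arc $J_2=(a^+,a^-)$ of $\univ$ and whose backward ideal point projects into the complementary arc $J_1$. Since $\eta^2$ acts as the identity on $\univ$, and since the analogous description for $L(\al 1,\al 2)$ puts forward ideal points in $J_1$ and backward ideal points in $J_2$ (as $\al 1$ determines the pair $(a^-,a^+)$), an orbit of $\hflot$ lies in some lozenge of $\mathcal D$ if and only if its two ideal points project to points in the two distinct components of $\univ\smallsetminus\{a^+,a^-\}$, i.e. if and only if the pair it determines in $\univ$ is linked with $(a^+,a^-)$. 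As the pair determined by $h\cdot\al 0$ is $(h\cdot a^+,h\cdot a^-)$ by equivariance of the $\pi_1(M)$-action on $\univ$, this completes the equivalence.

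The step I expect to demand the most care is the passage from ``non-simple'' to ``$h\cdot\al 0$ in the interior of a lozenge of $\mathcal D$'': one must rule out the possibility of a translate $h\cdot\al i$ landing on a \emph{side} of a lozenge of the chain rather than in an open lozenge. For this one uses that a stable leaf carrying a periodic orbit has that orbit as its unique core, so (under transverse orientability) its stabilizer is exactly $\langle\gamma\rangle$; hence no non-trivial translate of $\al 0$ lies on the boundary leaf $\hfs(\al 0)$, and similarly on the unstable sides. These degenerate configurations are exactly the ones in which $h$ fixes one of $a^+,a^-$ — that is, the non-linked ones — so they are correctly excluded on both sides of the equivalence. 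A secondary, more routine point is to account for the $\Z$-worth of lifts of $\hflot$ sharing a given pair of endpoints in $\univ$ and to check that they are distributed precisely among the lozenges $L(\al m,\al{m+1})$ of $\mathcal D$; this is a direct computation in Barbot's strip model of $\orb$, in which $\eta^2$ generates the covering $\leafs\to\univ$.
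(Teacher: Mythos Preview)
Your argument is correct and follows essentially the same route as the paper: both reduce the finiteness of the co-cylindrical class, via Theorem~\ref{thm:simple_and_cocylindrical}, to the non-simplicity of the chain $B(\wt\alpha,\al n)$ for some $n$, and then translate ``$h\cdot\wt\alpha$ lies in a lozenge of the chain'' into the linking condition on $\univ$. You are considerably more careful than the paper---in particular you spell out the $\eta$-equivariance step, the boundary/side degeneracies, and the $\Z$-ambiguity of lifts sharing a pair of ideal points---whereas the paper compresses all of this into two sentences using the leaf-space ordering directly.
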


\begin{proof}
 If the co-cylindrical class of $\alpha$ is finite, then (by Theorem \ref{thm:simple_and_cocylindrical}) the chain of lozenges containing $\wt{\alpha}$ is non-simple. So, there exists $h\in \pi_1(M)$ such that $h\cdot \wt{\alpha} \in L(\al i, \al{i+1})$. Hence, we have that $\hfs(\al i) < h\cdot \hfs(\wt{\alpha}) < \hfs(\al{i+1})$ and $\hfu(\al i) < h\cdot \hfu(\wt{\alpha}) < \hfu(\al{i+1})$. Projecting these leaves to $\univ$ shows that $(a^+, a^-)$ and $h \cdot (a^+, a^-)$ are linked.

Conversely, if there exists $h \in \pi_1(M)$ such that $(a^+, a^-)$ and $h \cdot (a^+, a^-)$ are linked, then $h\cdot \wt{\alpha} \in L(\al i, \al{i+1})$ for some $i$. Hence, by Theorem \ref{thm:simple_and_cocylindrical}, the co-cylindrical class of $\alpha$ must be finite.
\end{proof}

\begin{rem}
From now on in this section we assume that $M$ is atoroidal.
Thurston \cite{Thurston:3MFC} proved that there is a pseudo-Anosov flow
$\psi^t$ transverse to $\fs$ and regulating for $\fs$: 
every orbit of $\wt{\psi^t}$ intersects every leaf of $\fs$.
\end{rem}

\begin{thm} \label{thm:everything_intersects} 
 Let $(a^+, a^-)$ be the projection on $\univ$ of a periodic orbit $\wt{\alpha}$ of $\hflot$. Then, there exists $h\in \pi_1(M)$ such that $(a^+, a^-)$ and $(h \cdot a^+, h \cdot a^-)$ are linked.
\end{thm}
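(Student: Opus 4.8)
The plan is to use the regulating pseudo-Anosov flow $\psi^t$ transverse to $\fs$ to produce the element $h\in\pi_1(M)$ linking $(a^+,a^-)$ with $(h\cdot a^+, h\cdot a^-)$. The guiding principle is that, since $\psi^t$ regulates $\fs$, the orbit space $\orb_\psi$ of $\psi^t$ is naturally identified with any leaf of $\hfs$, and also with the universal circle $\univ$ sitting as the ``boundary'' of that leaf; under this identification the action of $\pi_1(M)$ on $\univ$ is (a quotient of) the action on $\orb_\psi$, and the pair $(a^+,a^-)$ corresponds to a pair of points coming from the orbit of the regulating flow through $\wt\alpha$ together with its behavior at infinity. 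So a linking statement in $\univ$ should be detectable from the dynamics of $\psi^t$ in its orbit space.

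First I would set up the identification carefully: fix the leaf $F_0 = \hfs(\wt\alpha)$, identify $\orb_\psi$ with $F_0$ via the regulating property, and identify $\partial_\infty F_0$ with $\univ$ via the universal-circle maps $f_{F,L}$. Then $(a^+,a^-)$ are exactly the two ideal points in $\partial_\infty F_0$ of the flow line $\wt\alpha$ (its forward and backward ideal points), which is also a geodesic-like quasigeodesic in $F_0$. Next, because $\alpha$ is a closed orbit, the generator $\gamma$ of its stabilizer acts on $F_0$ fixing $\wt\alpha$, with $a^+$ attracting and $a^-$ repelling on $\univ$. The key input is that $\psi^t$ is pseudo-Anosov on an atoroidal manifold, hence $M$ is hyperbolic and $\pi_1(M)$ is a nonelementary Gromov-hyperbolic group with no $\Z^2$; in particular the action of $\pi_1(M)$ on $\univ$ (equivalently on $\orb_\psi$ at infinity) is rich enough — concretely, $\gamma$ fixes exactly the two points $a^\pm$, there is no $\Z^2$ subgroup preserving the pair $\{a^+,a^-\}$, and the limit set of $\pi_1(M)$ acting on $\univ$ is everything. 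From this I would extract an element $h$ whose corresponding pair $(h\cdot a^+, h\cdot a^-)$ has one point strictly inside each of the two complementary arcs of $\univ\smallsetminus\{a^+,a^-\}$: one runs a standard ping-pong / North-South dynamics argument with $\gamma$ and a generic group element $k$ (whose fixed points are disjoint from $\{a^\pm\}$, which exists since the pair is not a global fixed set), taking $h = k$ or $h=\gamma^n k \gamma^{-n}$ for suitable $n$, so that $h\cdot a^+$ and $h\cdot a^-$ straddle $\{a^+,a^-\}$; this is exactly the definition of being linked.

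The step I expect to be the main obstacle is controlling precisely which complementary arc the points $h\cdot a^+$ and $h\cdot a^-$ land in — i.e.\ ruling out the ``nested'' or ``disjoint'' (unlinked) configurations and guaranteeing genuine linking rather than mere non-fixing. This is where the special structure of the skewed $\R$-covered flow and the regulating pseudo-Anosov flow must be used, not just abstract hyperbolicity: one needs that the two points $a^+,a^-$ of $\univ$ coming from a single orbit are ``antipodal'' with respect to the two laminations $\Lambda^\pm_{\text{univ}}$, or equivalently that $\wt\alpha$ corresponds to a regulating orbit whose stable/unstable prongs separate $a^+$ from $a^-$, so that any group element moving $\wt\alpha$ to a \emph{distinct} regulating orbit whose closure still meets both sides forces the linking. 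Concretely I would argue: pick $h$ so that $h\cdot\wt\alpha$, viewed in $\orb_\psi\cong F_0$, lies in the interior of $F_0$ in a position where its two ideal points are separated by $\wt\alpha$'s ideal points — possible because $\pi_1(M)$ moves $\wt\alpha$ densely around — and then the regulating property translates this separation in $F_0\cup\partial_\infty F_0$ directly into linking of the two pairs in $\univ$. Finally, I would invoke Proposition \ref{prop:intersects_equivalent_simple} to conclude, though the statement here is self-contained and only asserts the existence of $h$; combined with that proposition it yields that every co-cylindrical class is finite, which is the payoff feeding into Theorem B.
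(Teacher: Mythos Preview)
Your proposal correctly identifies the regulating pseudo-Anosov flow $\psi^t$ as the right tool and correctly sets up the identification of $\univ$ with the ideal boundary of the orbit space of $\wt{\psi}^t$. However, the core argument has a genuine gap, which you yourself flag as the ``main obstacle'' but do not actually resolve.

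The abstract Gromov-hyperbolicity / ping-pong approach cannot work on its own. The linking condition is \emph{not} a generic consequence of a nonelementary action on the circle: if the pair $(a^+,a^-)$ happened to be a leaf of one of the invariant laminations $\Lambda^\pm_{\text{univ}}$, then \emph{no} element of $\pi_1(M)$ could produce a linked translate, since leaves of a lamination do not cross. So one must first know that $(a^+,a^-)$ is not such a leaf --- a nontrivial input, exactly what the paper imports from \cite{Fenley:Ideal_boundaries}. Your density claim ``$\pi_1(M)$ moves $\wt\alpha$ densely around'' is both unproved and insufficient: density of translates does not by itself yield a translate in the linked configuration rather than the nested or disjoint ones.

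The paper's argument is concrete and avoids all of this. Viewing $\univ$ as the boundary of $\orb(\wt{\psi}^t)$, one uses that $a^+,a^-$ are not ideal points of any stable or unstable leaf of $\wt{\psi}^t$; hence leaves of both foliations exist separating $a^+$ from $a^-$. By transitivity of $\psi^t$ (from atoroidality) one then finds a \emph{periodic} orbit $A$ of $\wt{\psi}^t$ whose stable leaf $l^s(A)$ and unstable leaf $l^u(A)$ \emph{both} separate $a^+$ from $a^-$. Then $a^+$ and $a^-$ lie in opposite quadrants determined by $l^s(A)\cup l^u(A)$, and the generator $h$ of the stabilizer of $A$ moves these opposite quadrants in opposite directions along $\univ$, forcing $(a^+,a^-)$ and $(h\cdot a^+, h\cdot a^-)$ to be linked. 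Your proposal never isolates such a periodic orbit of the \emph{regulating} flow --- indeed you conflate it with $\wt\alpha$, which is an orbit of $\hflot$, not of $\wt{\psi}^t$ --- and that periodic orbit of $\psi^t$ is precisely the missing idea.
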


\begin{proof}
%
The universal circle $\univ$ can be seen as the boundary of the orbit space of 
the regulating pseudo-Anosov flow $\psi^t$ (\cite{Fenley:Ideal_boundaries}). In our situation,
this fact is easily seen to be true: the orbit space of $\wt{\psi^t}$ 
can be identified to a stable leaf $F$ of $\hfs$ (because $\psi^t$ is regulating for $\fs$), and the
universal circle is identified to the circle at infinity of $F$.

In this proof we will abuse notation and use the same notation
for the points $a^+$ in $\univ$ and the corresponding
point in the ideal circle of a leaf $F$ of
$\hfs$.
It was proved in \cite{Fenley:Ideal_boundaries} that
the ideal points $a^+, a^-$ cannot be ideal points of a leaf
of $\wt{\psi}^t$ in $F$.
Then, in the compactification of
the orbit space, 
$a^+$ and  $a^-$ have neighborhood systems in
$\orb(\wt{\psi^t}) \cup \univ$ defined
by unstable leaves.
Let $l^u$ be a non-singular unstable leaf of the pseudo-Anosov regulating flow separating $a^+$ and
$a^-$. The unstable leaf $l^u$ separates $\univ$ in two connected components $I^+$ and $I^-$,
containing respectively $a^+$ and $a^-$. There exists a singular stable leaf $l^s$ of $\wt{\psi}^t$,
intersecting $l^u$ and such that there are endpoints of $l^s$ in $I^+$ on both sides of $a^+$ in
$I^+$ (see Figure \ref{fig_first_case}). The existence of $l^s$ was proved in 
\cite{Fenley:Ideal_boundaries} and is related 
to the fact that stable  or unstable leaves of $\wt{\psi^t}$ define a 
neighborhood system of any point of $\univ$ in 
$\univ \cup \orb(\psi^t)$ (see \cite{Fenley:Ideal_boundaries}).

Let us fix an orientation on $\univ$ and write $x_1, \dots, x_p$ for the endpoints of $l^s$, chosen such that $a^+ \in (x_1,x_2)$. So there exists $2 \leq k \leq p$ such that $a^- \in (x_{k},x_{k+1})$ (with the convention that $x_{p+1}=x_1$).

As $M$ is atoroidal, the pseudo-Anosov flow $\psi^t$ is transitive (see \cite{Mosher}). Hence, the union of periodic orbits of $\psi^t$ is dense. So, there exists a non-singular orbit $A$ of $\wt{\psi}^t$, which is a lift of a periodic orbit of $\psi^t$, such that its unstable leaf $l^u(A)$ still separates $a^+$ and $a^-$ and such that its stable leaf $l^s(A)$ ends in $(x_{k},x_{k+1})$ on both sides of $a^-$. So $l^s(A)$ also separates $a^+$ and $a^-$. To get such an orbit $A$, it suffices to take $A$ close to the intersection of $l^s$ and $l^u$ in the section that has $a^-$ in its boundary (see Figure \ref{fig_first_case}).
This was proved in \cite{Fenley:Ideal_boundaries} and uses the fact 
that the leaf space of the stable foliation of $\psi^t$ lifted to $\wt{M}$
is Hausdorff $-$ also proved in 
\cite{Fenley:Ideal_boundaries}.

\begin{figure}[h]
\begin{pspicture}(0,-3.5)(6.5,3.3)
 

\pscircle[linewidth=0.04,dimen=outer](3.0,-0.32921875){3.0}
\psbezier[linewidth=0.04](0.4,-1.7292187)(1.0,-1.1292187)(2.0044713,-1.0236772)(2.6,-0.92921877)(3.1955287,-0.83476025)(4.8,-0.7292187)(5.8,-1.3292187)
\psbezier[linewidth=0.04](2.0,-3.1292188)(2.0,-2.1292188)(3.6,1.4707812)(4.4,2.2707813)
\psbezier[linewidth=0.04](3.0,-0.12921876)(2.0,0.27078125)(1.0,1.0707812)(1.0,1.8707813)
\psdots[dotsize=0.16](4.8,-2.7292187)
\psdots[dotsize=0.16](2.0,2.4707813)

\psbezier[linewidth=0.04,linecolor=red,linestyle=dashed,dash=0.16cm 0.16cm](2.6,-3.3292189)(2.4,-2.3292189)(3.2,1.0707812)(4.8,2.0707812)
\psbezier[linewidth=0.04,linecolor=red,linestyle=dashed,dash=0.16cm 0.16cm](0.6,-2.1292188)(1.0,-1.3292187)(4.8,-0.92921877)(5.6,-1.7292187)
\psdots[dotsize=0.16](2.8,-1.3292187)

\rput(0.018574925, 0.015528335){\psdots[dotsize=0.26,dotangle=28.739796,dotstyle=triangle*,linecolor=red](5.8,0.67078125)}
\rput(-0.0113531565, -0.0037936294){\psdots[dotsize=0.26,dotangle=-14.09651,dotstyle=triangle*,linecolor=red](4.0,-3.1292188)}
\rput(0.022998504, 0.06503281){\psdots[dotsize=0.26,dotangle=90.0,dotstyle=triangle*,linecolor=red](0.4,1.0707812)}
\psdots[dotsize=0.26,dotstyle=triangle*,linecolor=red](1.6,-2.9292188)

\psdots[dotsize=0.17,fillstyle=solid,dotstyle=o](3.2,-3.3)
\psdots[dotsize=0.18,fillstyle=solid,dotstyle=o](3.9,2.5)

\put(3,-1.6){$A$}
\put(5,-2.9){$a^-$}
\rput(1.97,2.8){$a^+$}

\put(3.9,2.7){$h\cdot a^+$}
\put(3,-3.7){$h\cdot a^-$}

\rput(5,-0.8){$l^u$}
\rput(3,0.4){$l^s$}
\rput(4.7,2.4){$x_1$}
\rput(0.8,2){$x_2$}
\rput(2,-3.4){$x_3$}
\end{pspicture} 
\caption{} \label{fig_first_case}
\end{figure}

The leaves $l^s(A)$ and $l^u(A)$ determine four quadrants and, by our construction, $a^+$ and $a^-$ are in two opposite quadrants. So if $h\in \pi_1(M)$ is a non-trivial element stabilizing $A$ and the ideal points of $l^s(A)$ and $l^u(A)$, the action of $h$ on $\univ$ moves $a^+$ and $a^-$ in opposite directions. Hence, $(a^+, a^-)$ and $h \cdot( a^+,  a^-)$ are linked.
\end{proof}

\begin{rem}\label{rem:intersects}
 Suppose that there exists $h\in \pi_1(M)$ such that $(a^+, a^-)$ and the
image $(h \cdot a^+, h \cdot a^-)$ are linked. Denote by $(\al i)$ the orbits in $\M$ projecting to $(a^+, a^-)$ and $\alpha_i = \pi (\al i)$ their projection to $M$. Then, for any $i$, there exist a $j$ and a $t$ such that $\psi^t(\alpha_i) \cap \alpha_j \neq \emptyset$. So if we push one orbit by the regulating flow, we obtain an actual intersection.
 \end{rem}

If we consider the geodesic flow case now, there is also a natural circle at infinity. Just take the visual boundary $\wt{\Sigma}(\infty)$ and the fundamental group $\pi_1(S\Sigma)$ naturally acts on it. Suppose that there exists $h\in \pi_1(M)$ such that $(a^+, a^-)$ and $(h \cdot a^+, h \cdot a^-)$ are linked. This means that the only geodesic in $\Sigma$ such that a lift of it has endpoints $(a^+, a^-)$ is non-simple. Hence, as there always is simple closed geodesics, the geodesic flow case is once again in sharp contrast with the atoroidal case we studied here.

As a corollary of Proposition \ref{prop:intersects_equivalent_simple} and Theorem \ref{thm:everything_intersects}, we obtain:
\begin{thm}
 Every co-cylindrical class is finite.
\end{thm}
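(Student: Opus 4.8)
The plan is to deduce the statement ``every co-cylindrical class is finite'' directly from the two results just proved, with no new geometric input required. Recall that Proposition~\ref{prop:intersects_equivalent_simple} states that, for a periodic orbit $\alpha$ with lift $\wt\alpha$ and projection $(a^+,a^-)$ to $\univ$, the co-cylindrical class of $\alpha$ is finite if and only if there exists $h\in\pi_1(M)$ with $(a^+,a^-)$ and $(h\cdot a^+, h\cdot a^-)$ linked. Theorem~\ref{thm:everything_intersects} asserts precisely that such an $h$ always exists (under the standing atoroidal hypothesis of this section). So the entire argument is: let $\alpha$ be any closed orbit of $\phi^t$; pick a lift $\wt\alpha$ and let $(a^+,a^-)$ be its projection to $\univ$; by Theorem~\ref{thm:everything_intersects} there is $h\in\pi_1(M)$ making $(a^+,a^-)$ and $(h\cdot a^+, h\cdot a^-)$ linked; by the ``if'' direction of Proposition~\ref{prop:intersects_equivalent_simple} the co-cylindrical class of $\alpha$ is therefore finite. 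Since $\alpha$ was arbitrary, every co-cylindrical class is finite.

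First I would note that there is nothing to check about well-definedness: the co-cylindrical relation was defined in Section~\ref{sec_co-cylindrical_class}, and whether a class is finite does not depend on the choice of representative within it, nor on the choice of lift $\wt\alpha$ — a different lift $g\cdot\wt\alpha$ has projection $(g\cdot a^+, g\cdot a^-)$, and the existence of a linking $h$ for one pair is equivalent (via conjugation by $g$) to its existence for the other. So the chain of implications above is clean. The one point worth spelling out is that every co-cylindrical class is the co-cylindrical class \emph{of some periodic orbit}: by definition a co-cylindrical class consists of curves $c_1,c_2,\dots$ pairwise bounding embedded annuli, and in our setting (by Theorem~\ref{thm:simple_and_cocylindrical}) the relevant orbits are periodic orbits of $\phi^t$, so Proposition~\ref{prop:intersects_equivalent_simple} applies to each of them.

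There is essentially no obstacle here — this theorem is a formal corollary, as the excerpt itself signals with the phrase ``As a corollary of Proposition~\ref{prop:intersects_equivalent_simple} and Theorem~\ref{thm:everything_intersects}''. The real work was already done in Theorem~\ref{thm:everything_intersects} (constructing the singular stable leaf $l^s$ of the regulating pseudo-Anosov flow $\psi^t$ whose ideal endpoints straddle $a^+$, then using transitivity of $\psi^t$ and Hausdorffness of the lifted stable leaf space to upgrade to a periodic orbit $A$ whose stable and unstable leaves put $a^+$ and $a^-$ in opposite quadrants, so that the generator of the stabilizer of $A$ moves the pair $(a^+,a^-)$ to a linked pair) and in Theorem~\ref{thm:simple_and_cocylindrical}/Proposition~\ref{prop:intersects_equivalent_simple} (translating the linking condition into non-simplicity of the chain of lozenges, hence finiteness of the co-cylindrical class). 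Thus I would write the proof in two or three sentences, simply invoking the two cited results in sequence.

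\begin{proof}
Let $\mathcal{K}$ be a co-cylindrical class. By definition (and Theorem~\ref{thm:simple_and_cocylindrical}), $\mathcal{K}$ is the co-cylindrical class of some periodic orbit $\alpha$ of $\flot$. Choose a lift $\wt{\alpha}$ of $\alpha$ to $\M$ and let $(a^+, a^-)$ be its projection to $\univ$. By Theorem~\ref{thm:everything_intersects}, there exists $h \in \pi_1(M)$ such that $(a^+, a^-)$ and $(h \cdot a^+, h \cdot a^-)$ are linked. Hence, by Proposition~\ref{prop:intersects_equivalent_simple}, the co-cylindrical class of $\alpha$ is finite, that is, $\mathcal{K}$ is finite. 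Since $\mathcal{K}$ was arbitrary, every co-cylindrical class is finite.
\end{proof}
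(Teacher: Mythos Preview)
Your proposal is correct and matches the paper's approach exactly: the paper presents this theorem as an immediate corollary of Proposition~\ref{prop:intersects_equivalent_simple} and Theorem~\ref{thm:everything_intersects}, without even writing out a separate proof. Your expanded two-line argument is precisely the intended deduction.
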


Note that it is still an open question whether a co-cylindrical class can be non-trivial. We only know that some are:
\begin{prop}
 There exist periodic orbits of $\flot$ with trivial co-cylindrical class. 
\end{prop}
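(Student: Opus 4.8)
The plan is to produce a periodic orbit $\alpha$ whose chain of lozenges $B(\wt{\alpha}, \wt{\beta})$ is non-simple for \emph{every} choice of a second periodic orbit $\beta$ freely homotopic (in the double free homotopy class sense) to $\alpha$, i.e.\ the orbit of \emph{each} corner under $\pi_1(M)$ already meets the first lozenge $L(\wt\alpha, \eta(\wt\alpha))$ and likewise for every $L(\al i, \al{i+1})$. By Theorem \ref{thm:simple_and_cocylindrical} this rules out any embedded annulus with one boundary component $\alpha$. The natural candidate is a \emph{singular orbit} of the regulating pseudo-Anosov flow $\psi^t$, or more precisely a periodic orbit of $\flot$ whose projection $(a^+,a^-)$ to $\univ$ sits at a place where many translates crowd in; the cleanest choice is the periodic orbit of $\flot$ corresponding (via the identification of $\univ$ with the ideal boundary of a stable leaf $F$, as in the proof of Theorem \ref{thm:everything_intersects}) to a point that is a prong-point of a \emph{singular} periodic orbit of $\psi^t$.

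First I would recall from the proof of Theorem \ref{thm:everything_intersects} that $\univ$ is the ideal boundary of the orbit space $\orb(\wt{\psi^t})$, and that for the given $(a^+,a^-)$ the neighborhood systems of $a^\pm$ are given by unstable leaves of $\wt{\psi^t}$. The strengthening we need: instead of finding \emph{one} $h$ with $(a^+,a^-)$ and $h\cdot(a^+,a^-)$ linked, we want to choose $\alpha$ so that the translates $\{h\cdot(a^+,a^-)\}_{h\in\pi_1(M)}$ are linked with \emph{every} $\eta^{2k}$-translate $(\eta^{2k}a^+,\eta^{2k}a^-)$ of the pair, and moreover with every pair determined by an intermediate corner $\al i$ of the chain. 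Since $\eta$ and the $\pi_1(M)$-action commute (Proposition \ref{prop:eta_s_eta_u}), by the argument of Proposition \ref{prop:cardinality_isotopy_class} it suffices to arrange this for the single lozenge $L(\wt\alpha,\eta(\wt\alpha))$: if some $h\cdot\wt\alpha$ lies in $L(\al i,\al{i+1})$ for \emph{all} $i$ in the relevant range, simplicity fails for all $\beta$. So the problem reduces to: find a periodic $\wt\alpha$ such that for every $i$ there is $h_i\in\pi_1(M)$ with $h_i\cdot\wt\alpha \in L(\al i, \al{i+1})$.

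Here is where the transitivity of $\psi^t$ (valid because $M$ is atoroidal, see \cite{Mosher}) does the work: periodic orbits of $\psi^t$ are dense in $M$, hence lifts of periodic orbits of $\psi^t$ are dense in $\orb(\wt{\psi^t})$, which we identify with the stable leaf $F$. The lozenge $L(\al i,\al{i+1})$ is an open subset of $\orb(\flot)$; I would show it contains the projection of a lift of a periodic orbit of $\flot$ that is \emph{also} in the $\pi_1(M)$-orbit of $\wt\alpha$. For this, pick $\alpha$ itself to be (the $\flot$-periodic orbit associated to) a point in the \emph{interior} of some lozenge that is homeomorphic, under $\eta$ and the $\pi_1(M)$-action, to all the others in its chain; concretely, take $\wt\alpha$ so that $a^\pm$ are separated by the stable and unstable leaves of a \emph{singular} periodic orbit $A$ of $\psi^t$ with at least three prongs. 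A $p$-prong singular orbit ($p\ge 3$) yields a subgroup of $\pi_1(M)$ whose action on $\univ$ has dynamics (north–south in each of $2p$ sectors) forcing, for a suitable power $h$, that $h\cdot(a^+,a^-)$ falls into \emph{each} of the relevant intervals of $\univ$ as $h$ ranges over the powers; by density and openness this can be pushed to land inside the interval of $\univ$ cut out by $L(\al i,\al{i+1})$ for every $i$ simultaneously, because the chain is $\eta$-periodic and $h$ commutes with $\eta$.

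The main obstacle I expect is the last point: showing that a \emph{single} orbit's worth of translates $\{h\cdot\wt\alpha\}$ meets \emph{every} lozenge $L(\al i,\al{i+1})$ of the chain simultaneously, rather than just one of them. The $\eta$-equivariance reduces the infinitely many conditions to finitely many (one per lozenge in a fundamental domain of the $\gamma$-action on the chain — in fact to a single one by Proposition \ref{prop:cardinality_isotopy_class}), but one still must exhibit a $\pi_1(M)$-translate of $\wt\alpha$ genuinely inside that one lozenge. The clean way is: choose $\alpha$ to be a closed orbit of $\flot$ contained in (the closure of) a simple-looking lozenge whose corner is a $p$-prong singular orbit $A$ of $\psi^t$; the stabilizer of $A$ in $\pi_1(M)$ acts on $\univ$ with enough dynamics — combined with density of $\psi^t$-periodic orbits and the Hausdorffness of the stable leaf space of $\psi^t$ in $\wt M$ (\cite{Fenley:Ideal_boundaries}) — to translate $\wt\alpha$ into the open set projecting to $L(\al 0,\al 1)$. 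Once that single non-simplicity is established, Proposition \ref{prop:cardinality_isotopy_class} propagates it through the whole double free homotopy class and Theorem \ref{thm:simple_and_cocylindrical} converts it to the non-existence of any embedded annulus, giving the trivial co-cylindrical class.
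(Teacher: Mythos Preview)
Your reduction is correct: by the $\eta$-equivariance argument of Proposition~\ref{prop:cardinality_isotopy_class}, the co-cylindrical class of $\alpha$ is trivial as soon as the \emph{single} lozenge $L(\wt\alpha,\eta(\wt\alpha))$ contains a $\pi_1(M)$-translate of $\wt\alpha$ itself. The gap is in how you propose to achieve this. The pseudo-Anosov machinery you invoke (stable and unstable leaves of a singular orbit $A$ of $\psi^t$, dynamics of its stabilizer on $\univ$) is exactly the argument of Theorem~\ref{thm:everything_intersects}, and what that argument produces is a deck transformation $h$ with $(a^+,a^-)$ and $h\cdot(a^+,a^-)$ \emph{linked}. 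By Proposition~\ref{prop:intersects_equivalent_simple} this only says $h\cdot\wt\alpha\in L(\al i,\al{i+1})$ for \emph{some} $i$, with no control on which $i$. Since in the atoroidal case the $\alpha_j=\pi(\al j)$ are pairwise distinct orbits (Theorem~\ref{thm:infinite_homotopy_class}), $\al{-i}$ is not a $\pi_1(M)$-translate of $\al 0$ or $\al 1$ when $i\neq 0,-1$, so you cannot conclude that $L(\al 0,\al 1)$ is non-simple. Your appeal to ``enough dynamics'' of a $p$-prong stabilizer to force $i=0$ is the crux and is not justified: the stabilizer of $A$ pushes points toward the ideal points of the prongs of $A$, and there is no mechanism tying those attractors to the specific interval of $\univ$ cut out by the first lozenge of $\wt\alpha$, especially since that lozenge moves with the choice of $\alpha$. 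There is also some conflation of the two orbit spaces (lozenges live in $\orb(\flot)$, not $\orb(\psi^t)$; a corner of a lozenge is a $\flot$-orbit, not a $\psi^t$-singularity).

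The paper's proof is both simpler and entirely intrinsic to $\flot$: it never touches $\psi^t$. One takes a flow box $V$, uses transitivity of $\flot$ together with the Anosov Closing Lemma to produce a periodic orbit $\alpha$ that $2\eps$-fills $V$, and then observes that two suitably placed strands $x,y$ of $\alpha\cap V$ can be joined by a short closed path in $V$ traversing the correct combination of positive and negative stable and unstable half-leaves. Because this path stays in the contractible set $V$, its lift shows that a lift $\wt y$ lies in the lozenge with corner $\wt x$; and since $x$ and $y$ are on the \emph{same} closed orbit $\alpha$, the orbits through $\wt x$ and $\wt y$ are $\pi_1(M)$-translates of one another. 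This gives $h\cdot\wt\alpha\in L(\wt\alpha,\eta(\wt\alpha))$ directly, which is precisely the condition your argument was unable to secure.
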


\begin{rem}
For such an orbit, Proposition \ref{prop:cardinality_isotopy_class} shows that every other orbit in the double free homotopy class must also have a trivial co-cylindrical class.
\end{rem}

\begin{proof}
 Let $V$ be a flow box of $\flot$, as $\flot$ is transitive, we can pick a long segment of a dense orbit that $\eps$-fills $V$. Then, by the Anosov Closing lemma (see \cite{KatokHassel}), we get a periodic orbit $\alpha$ that $2\eps$-fills $V$. Now, choose $x$ on one of the connected components of $\alpha \cap V$. If $\eps$ was chosen small enough, then there must exist $y$ on another connected component of $\alpha \cap V$ such that there is a close path $c$ \emph{staying in $V$}, starting at $x$ going through the positive stable leaf of $x$, then the negative unstable leaf of $y$, then the negative stable leaf of $y$ and finally close up along the positive stable leaf of $x$. If we lift the path $c$ to the universal cover of $M$ and project it to the orbit space $\orb$, as $V$ has no topology, we see that the projection of the lift of $y$ must be inside the lozenge determined by the lift of $x$ (remember that we chose our flow so that the lozenges orientation is $(+,+,-,-)$, otherwise, we would have to 
modify our path $c$, see Figure \ref{fig:lozenge++--}). Hence the lozenge is non-simple and therefore the co-cylindrical class of $\alpha$ is trivial.
\end{proof}

\bibliographystyle{amsplaineprint}
\bibliography{tout}
\end{document}